\documentclass[a4paper,10pt]{amsart}


\usepackage[T1]{fontenc} \usepackage[utf8]{inputenc}
\usepackage{lmodern}

\usepackage{enumerate}

\usepackage{epsfig} \input{xy} \xyoption{all}


\usepackage{amsfonts,amsmath,amstext,amsbsy,amssymb}
\usepackage{amsbsy,amsopn,amsthm,amscd,amsxtra}

\usepackage{latexsym,mathrsfs}

\usepackage{mathabx}

\usepackage{hyperref}
\RequirePackage[dvipsnames]{xcolor} 
\definecolor{halfgray}{gray}{0.55} 
\definecolor{webgreen}{rgb}{0,0.5,0}
\definecolor{webbrown}{rgb}{.6,0,0} \hypersetup{%
  colorlinks=true, linktocpage=true, pdfstartpage=3,
  pdfstartview=FitV,%
  breaklinks=true, pdfpagemode=UseNone, pageanchor=true,
  pdfpagemode=UseOutlines,%
  plainpages=false, bookmarksnumbered, bookmarksopen=true,
  bookmarksopenlevel=1,%
  hypertexnames=true,
  pdfhighlight=/O,
  urlcolor=webbrown, linkcolor=RoyalBlue,
  citecolor=webgreen, 
  pdftitle={},%
  pdfauthor={},%
  pdfsubject={2000 MAthematical Subject Classification: Primary:},%
  pdfkeywords={},%
  pdfcreator={pdfLaTeX},%
  pdfproducer={LaTeX with hyperref}%
}





\newcommand{\abs}[1]{\left\lvert{#1}\right\rvert}
\newcommand{\norm}[1]{\left\|{#1}\right\|}

\newcommand{\bb}{\mathbb} 
\newcommand{\mc}{\mathcal} 

\newcommand{\R}{\mathbb{R}}\newcommand{\N}{\mathbb{N}}
\newcommand{\Z}{\mathbb{Z}}

 \newcommand{\ie}{i.e.\ }
\newcommand{\eg}{e.g.\ }

\newtheorem*{thm*}{Theorem} 
\newtheorem*{thmA}{Theorem A} \newtheorem*{thmB}{Theorem B}

\newtheorem{theorem}{Theorem}[section]
\newtheorem{proposition}[theorem]{Proposition}
\newtheorem{lemma}[theorem]{Lemma}
\newtheorem{corollary}[theorem]{Corollary}

\newtheorem{question}[theorem]{Question}

\theoremstyle{definition} \newtheorem{definition}[theorem]{Definition}

\theoremstyle{remark} \newtheorem{remark}[theorem]{Remark}
\newtheorem{claim}{Claim}



\newcommand{\Diff}[2]{\mathrm{Diff}_{#1}^{#2}}
\newcommand{\Homeo}[1]{\mathrm{Homeo}_{#1}}

\newcommand{\eps}{\varepsilon} \newcommand{\carr}{\righttoleftarrow}
\newcommand{\Hol}{\mathcal{H}}

 \newcommand{\dd}{\:\mathrm{d}}

\newcommand{\dpf}{\partial_{\mathrm{fib}}}

\DeclareMathOperator{\M}{\mathfrak{M}} \DeclareMathOperator{\Lip}{Lip}
\DeclareMathOperator{\Per}{Per} \DeclareMathOperator{\Fix}{Fix}
 \DeclareMathOperator{\GL}{GL}



\title[Liv\v{s}ic theorem]{Liv\v{s}ic theorem for low-dimensional
  diffeomorphism cocycles}

\author[A. Kocsard]{Alejandro Kocsard} \address{Instituto de
  Matem\'{a}tica e Estat\'{\i}stica - Universidade Federal
  Fluminense. Rua M\'{a}rio Santos Braga S/N, 24.020-140 Niter\'{o}i,
  RJ, Brasil}\urladdr{www.professores.uff.br/kocsard}
\email{akocsard@id.uff.br}

\author[R. Potrie]{Rafael Potrie} \address{CMAT, Facultad de Ciencias,
  Universidad de la Rep\'{u}blica, Uruguay}
\urladdr{www.cmat.edu.uy/$\sim$rpotrie} \email{rpotrie@cmat.edu.uy}

\date{\today}


\begin{document}

\maketitle

\begin{abstract}
  We prove a Liv\v{s}ic type theorem for cocycles taking values in
  groups of diffeomorphisms of low-dimensional manifolds. The results
  hold without any localization assumption and in very low
  regularity. We also obtain a general result (in any dimension) which
  gives necessary and sufficient conditions to be a coboundary.
\end{abstract}

\section{Introduction}\label{SectionIntroduccion}

In the study of hyperbolic dynamical systems there is a general (and
vague) idea that can be summarized with the following sentence:
\begin{quote}
  \emph{Most of the dynamical interesting information on a hyperbolic
    system is concentrated in its periodic orbits.}
\end{quote}

An archetypal example of a result supporting this idea is the
celebrated Liv\v{s}ic's theorem
~\cite{LivCertPropHomol,LivsicCohomDynSys} claiming that given a
hyperbolic homeomorphism $f\colon M\carr$, a H\"{o}lder function
$\Phi\colon M\to\R$ is a \emph{coboundary}, \ie there exists a
continuous function $u\colon M\to\R$ satisfying
\begin{displaymath}
  u\circ f-u=\Phi,
\end{displaymath}
if and only if
\begin{displaymath}
  \sum_{j=0}^{n-1}\Phi(f^j(p))=0,
\end{displaymath}
for every periodic point $p\in M$, with $f^n(p)=p$. 

Due to the interest this result has received since its appearance and
the large amount of consequences that follow from it, several
generalizations have been studied. Some of them consider more general
dynamics on the base. For instance, in the works
\cite{KatokKononenkoCocStab,WilkinsonCohomoEq} the cohomology of real
cocycles over partially hyperbolic systems is analyzed.

In this paper, we consider a different kind of generalization: given a
complete metric group $G$, a \emph{$G$-cocycle} is just a continuous
map $\Phi\colon M\to G$ and one wants to determine whether the
condition
\begin{displaymath}
  \Phi\big(f^{n-1}(p)\big)\Phi\big(f^{n-2}(p)\big)\ldots\Phi(p) = e_G,
  \quad\forall p\in\Fix(f^n),
\end{displaymath}
where $e_G$ is the identity element of $G$, is not just necessary but
also sufficient to guaranty the existence of a ``transfer function''
$u\colon M\to G$ satisfying
\begin{displaymath}
  \Phi(x)=u(f(x))u(x)^{-1},\quad\forall x\in M.
\end{displaymath}

Liv\v{s}ic himself gave in \cite{LivCertPropHomol} an affirmative
answer to this question for cocycles taking values on a topological
group admitting a complete bi-invariant distance (\eg Abelian or
compact groups). However, the general situation is considerably much
more complicated.

So far, the main technique to handle this problem when the group
$G$ does not admit a bi-invariant metric has consisted in considering
a left-invariant metric on $G$ and to try to control the distortion
produced by right translations to be able to apply the very same
scheme of proof used in the Abelian case.

In order to get such a control of the distortion of the distance, some
\emph{localization hypotheses} have been considered in the
literature. For instance, in \cite{LivsicCohomDynSys} Liv\v{s}ic gave
a positive answer to above question for linear cocycles (\ie where
$G=\GL_d(\R)$) which are not too far away from the identity constant
cocycle. Improvements of these results using weaker localization
hypotheses have been obtained for cocycles taking values in arbitrary
finite-dimensional Lie groups (see
\cite{delaLlaveWindsorLivThmNonComm, NiticaKatokRigHighRankI} and
references therein) until the recent complete solution of the global
Liv\v{s}ic problem for linear cocycles \cite{KalininLivThmMat} (see
also the recent preprint \cite{GrabarnikGuysinsky}).

In the infinite dimensional case, particularly when $G$ is a group of
diffeomorphisms of a compact manifold, the study began with the
seminal paper of Ni\c{t}ic\u{a} and T\"or\"ok
\cite{NiticaTorokCohomolDynSyst}. The diffeomorphism groups seem to be
the most interesting infinite dimensional groups for applications to
rigidity theory (see \cite{NiticaKatokRigHighRankI} and references
therein).

It is important to remark that, in contrast with the finite
dimensional case, all the results for groups of diffeomorphisms
obtained so far (see \cite{delaLlaveWindsorLivThmNonComm} for a survey
with references) involve non-sharp localization hypotheses (in the
sense that not every coboundary satisfies them) and require higher
regularity for the diffeomorphism group (\ie $G=\Diff{}r(N)$, with
$r\geq 4$). Moreover, the control of distortion techniques used in
\cite{NiticaTorokCohomolDynSyst, delaLlaveWindsorLivThmNonComm} yield
a loss of regularity in the solution of the cohomological equation. A
recent result for infinite dimensional groups which does not fit in
the previous description is due to Navas and
Ponce~\cite{NavasPonceLivAnalGerms}. They prove a \emph{Liv\v{s}ic
  theorem} for cocycles taking values in the group of analytic germs
at the origin.

In this paper we use completely different techniques, with a more
geometric flavor, which allow us to deal with the low regularity case
(cocycles can take values in the group of $C^1$-diffeomorphisms). The
main novelty of our approach is Theorem~\ref{thm:lyap-exp-dom-cobound}
which can be of independent interest. Regarding this result within the
context of localization arguments, we could say that it is proven that
our \emph{non-uniform localization hypothesis} (\ie vanishing of
fibered Lyapunov exponents) is equivalent to be a coboundary.

In \S\ref{sec:POO-implies-zero-Lyap} we show that in the
low-dimensional case the periodic orbit condition implies the
vanishing of fibered Lyapunov exponents, proving in such cases the
general (or global) \emph{Liv\v{s}ic theorem} for groups of
diffeomorphisms. We conjecture that such a result holds in any
dimension.

\subsection{Main results}
\label{sec:main-res}

The main results of this article are the following Liv\v{s}ic type
theorems which, to the best of our knowledge, are the first general
(\ie global) results for cocycles taking values in groups of
diffeomorphisms of compact manifolds:

\begin{thmA}
  \label{thm:main-thm-T}
  Let $f\colon M\carr$ be a hyperbolic homeomorphism and $\Phi\colon
  M\to\Diff{}1(\R/\Z)$ be an $\alpha$-H\"older cocycle satisfying the
  so called \emph{periodic orbit obstructions}
  \begin{displaymath}
    \Phi(f^{n-1}(p))\circ\Phi(f^{n-2}(p))\circ\cdots\circ\Phi(p)
    =id_{\R/\Z},\quad\forall p\in\Fix(f^n),\ \forall n\in\N.
  \end{displaymath}

  Then, there exists an $\alpha$-H\"older map $u\colon
  M\to\Diff{}1(\R/\Z)$ satisfying
  \begin{displaymath}
    \Phi(x)=u(f(x))\circ u(x)^{-1},\quad\forall x\in M.
  \end{displaymath}
\end{thmA}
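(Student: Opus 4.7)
The plan is to combine the two results announced in the introduction. Theorem~\ref{thm:lyap-exp-dom-cobound} reduces the problem, in any fiber dimension, to showing that all fibered Lyapunov exponents of the cocycle vanish, while \S\ref{sec:POO-implies-zero-Lyap} is devoted to deducing the vanishing of those exponents from the periodic orbit obstructions in the low-dimensional case. Granting these two ingredients, the proof of Theorem A amounts to verifying their hypotheses in the present setting and then invoking them.

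More concretely, introduce the skew product $F(x,\theta) = (f(x), \Phi(x)(\theta))$ on $M \times \R/\Z$ and the continuous real-valued additive cocycle $\psi(x,\theta) = \log\abs{D\Phi(x)(\theta)}$ over $F$; the fibered Lyapunov exponents at an ergodic $F$-invariant probability $\hat\mu$ are exactly the integrals $\int \psi\, d\hat\mu$. For every $p\in\Fix(f^n)$, the hypothesis $\Phi(f^{n-1}(p))\circ\cdots\circ\Phi(p) = id_{\R/\Z}$ says that the $n$-fold fiber composition is the identity, so its derivative is identically equal to $1$ over the whole fiber above~$p$. Consequently, the Birkhoff sums of $\psi$ along every $F$-orbit sitting over the $f$-orbit of $p$ vanish. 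The task is then to bootstrap this special vanishing to arbitrary ergodic $F$-invariant measures.

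This bootstrap is where the one-dimensionality of the fiber plays an essential role. Using the hyperbolic structure of $f$ (shadowing/specification) together with the $\alpha$-H\"older regularity of $\Phi$ and the continuity of $\psi$, one approximates the $f$-orbit of a $\hat\mu$-generic point by periodic orbits. Because the fiber derivative is scalar, it suffices to compare the Birkhoff sums of the single scalar cocycle $\psi$ along the true orbit with the (zero) sums along the shadowing periodic orbit, without ever having to track Oseledets splittings or to confront the non-commutativity of $\GL_d(\R)$ that would occur for higher-dimensional fibers. This delivers the vanishing of all fibered Lyapunov exponents, and hence, by Theorem~\ref{thm:lyap-exp-dom-cobound}, the existence of a transfer map $u\colon M\to\Diff{}1(\R/\Z)$.

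The main obstacle I expect is precisely this bootstrap step. The skew product $F$ is typically not hyperbolic, so one cannot apply a classical Liv\v{s}ic argument to $\psi$ directly on $M\times\R/\Z$; the work has to be done at the base level, while keeping track of the fiber point as one shadows, and one must use both the $\alpha$-H\"older control on $\Phi$ and the $C^1$ structure on the fiber carefully enough to trade the periodic vanishing for an averaged vanishing. Once the exponent vanishing is in hand, the $\alpha$-H\"older regularity of the transfer map $u$ should follow from the standard Liv\v{s}ic regularity bootstrap for coboundaries over hyperbolic homeomorphisms, using the uniform hyperbolicity of $f$ and the $\alpha$-H\"older hypothesis on $\Phi$.
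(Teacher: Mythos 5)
Your overall architecture is the same as the paper's: pass to the skew product, show that all fibered Lyapunov exponents vanish for every invariant measure, and then invoke Theorem~\ref{thm:lyap-exp-dom-cobound} to produce the ($\alpha$-H\"older) transfer map. However, the step you yourself flag as the main obstacle --- bootstrapping the vanishing of the period sums of $\psi=\log\abs{D\Phi(\cdot)(\cdot)}$ over periodic base orbits to the vanishing of $\int\psi\,d\hat\mu$ for an arbitrary ergodic $F$-invariant $\hat\mu$ --- is a genuine gap, and the mechanism you sketch (shadow a $\hat\mu$-generic base orbit by a periodic one and compare Birkhoff sums of the scalar cocycle $\psi$ along the two orbits) does not work as stated. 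The cocycle $\psi$ depends on the fiber coordinate, and the skew product has no hyperbolicity in the fiber direction, so the Anosov closing lemma on the base gives you no control whatsoever on the distance between the fiber components over the true orbit and over the shadowing periodic orbit across the whole period; without that control the values of $\psi$ along the two orbits cannot be compared, and ``trading periodic vanishing for averaged vanishing'' breaks down. This is exactly the difficulty that keeps the higher-dimensional problem open (Question 5.1 of the paper), so it cannot be waved away by continuity and H\"older estimates alone.

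The paper closes this gap by a contradiction argument (Theorem~\ref{thm:non-zero-exp}), and it is worth noting how differently the one-dimensionality enters. One assumes some ergodic $\hat\mu$ has $\lambda^+(F,\hat\mu)<0$; the assumed contraction is precisely what supplies the missing fiber control: Pesin-type adapted metrics and sub-exponential charts (with $\eps<\alpha\lambda/2$, using the $C^1$ fiber regularity and the $\alpha$-H\"older base regularity), a Lusin set of uniform size, recurrence, and the exponential estimates of the closing lemma allow one to push a small fiber ball over the shadowing periodic point into itself (Lemma~\ref{Lemma-PuntoCerca}), producing a periodic point of $F$ that is uniformly contracting along the fiber --- contradicting \eqref{eq:POO-bund-map}. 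Applying this to $F$ and $F^{-1}$ yields $\lambda^-(F,\hat\mu)\le 0\le\lambda^+(F,\hat\mu)$ for every ergodic $\hat\mu$; this part is dimension-free. The fiber dimension one is used only at the very end, via Birkhoff's theorem for the scalar log-derivative, to get $\lambda^-(F,\hat\mu)=\lambda^+(F,\hat\mu)$ and hence that both vanish --- not, as you suggest, to enable a direct comparison of Birkhoff sums free of Oseledets bookkeeping. To repair your proof you would need to supply something equivalent to Theorem~\ref{thm:non-zero-exp} (a Katok-closing/Pesin-chart argument under the contradiction hypothesis), after which your appeal to Theorem~\ref{thm:lyap-exp-dom-cobound}, including the H\"older regularity of $u$, is exactly as in the paper.
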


\begin{remark}
  The very same argument we use to prove Theorem A works for cocycles
  taking values in $\Diff{}1([0,1])$
\end{remark}

For higher regularity, invoking the main result of
\cite{delaLlaveWindsorSmoorhDependence}, one easily gets the following
consequence of Theorem A:

\begin{corollary}
  \label{cor:main-thm-A}
  Let $f\colon M\carr$ be a hyperbolic homeomorphism and $\Phi\colon
  M\to\Diff{}r(\R/\Z)$, with $r\geq 1$, be an $\alpha$-H\"older
  cocycle satisfying the \emph{periodic orbit obstructions}.

  Then, there exists an $\alpha$-H\"older map $u\colon
  M\to\Diff{}r(\R/\Z)$ satisfying
  \begin{displaymath}
    \Phi(x)=u(f(x))\circ u(x)^{-1},\quad\forall x\in M.
  \end{displaymath}
\end{corollary}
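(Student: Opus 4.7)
The strategy is two-step: first obtain a low-regularity transfer function via Theorem A, then bootstrap the fiber regularity using the smooth-dependence result of de la Llave and Windsor.

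Since $\Diff{}r(\R/\Z)$ embeds continuously in $\Diff{}1(\R/\Z)$ for every $r\geq 1$, the given cocycle $\Phi$ is a fortiori an $\alpha$-H\"older cocycle with values in $\Diff{}1(\R/\Z)$, still satisfying the periodic orbit obstructions. Applying Theorem A produces an $\alpha$-H\"older map $u\colon M\to\Diff{}1(\R/\Z)$ solving $\Phi(x)=u(f(x))\circ u(x)^{-1}$ for every $x\in M$. What remains is to show that this same $u$ actually takes values in the smaller group $\Diff{}r(\R/\Z)$ and that it is $\alpha$-H\"older with respect to the stronger $C^r$ topology on the target.

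For the second step, I would recast the coboundary equation as a fiber-preserving conjugacy between two skew-products: setting
\begin{displaymath}
  F(x,\theta):=(f(x),\Phi(x)(\theta)),\qquad U(x,\theta):=(x,u(x)(\theta)),
\end{displaymath}
the equation $u(f(x))=\Phi(x)\circ u(x)$ is equivalent to $U\circ(f\times\mathrm{id})=F\circ U$. Thus $U$ is a fiber-$C^1$ conjugacy between the trivial skew-product $f\times\mathrm{id}$ and the cocycle skew-product $F$ over the hyperbolic base $f$, and each fiber map $\Phi(x)$ is of class $C^r$. The main result of \cite{delaLlaveWindsorSmoorhDependence} is precisely the kind of smooth-dependence statement that promotes the fiber regularity of such a conjugacy up to that of the cocycle, while preserving the H\"older dependence on the base point. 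Once invoked, this yields $u(x)\in\Diff{}r(\R/\Z)$ for every $x\in M$, together with the desired $\alpha$-H\"older regularity of $u\colon M\to\Diff{}r(\R/\Z)$.

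The only real obstacle is verifying that the hypotheses in \cite{delaLlaveWindsorSmoorhDependence} fit the present low-regularity setting, in particular that only H\"older regularity of $\Phi$ along the base is needed and that no additional assumption on the fiberwise dynamics is required beyond the fact that the cocycle is already known to be a coboundary with a $C^1$ transfer function. These verifications should be routine, after which the corollary follows without further argument.
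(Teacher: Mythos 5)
Your proposal is correct and follows exactly the paper's route: the paper presents Corollary~\ref{cor:main-thm-A} as an immediate consequence of Theorem~A combined with the main result of \cite{delaLlaveWindsorSmoorhDependence}, and offers no further detail beyond the one-sentence remark preceding the statement. Your two-step argument---first obtain a $C^1$ transfer function via Theorem~A (noting $\Diff{}r(\R/\Z)\hookrightarrow\Diff{}1(\R/\Z)$), then bootstrap the fiber regularity of $u$ from $C^1$ to $C^r$ by invoking the de la Llave--Windsor smooth-dependence theorem applied to the skew-product conjugacy $U\circ(f\times\mathrm{id})=F\circ U$---is precisely what the authors intend, and you have in fact supplied more detail than the paper itself.
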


Assuming higher regularity on the dynamics of the base and the
cocycle, applying the results of \cite{JourneRegLem,
  NiticaTorokRegularResultsSolLiv} one can improve the regularity of
the solution of the cohomological equation. Since this kind of results
is beyond the scope of this article, we suggest the interested reader
to consult \cite{NiticaKatokRigHighRankI,WilkinsonCohomoEq} for
further information.

In dimension $2$, we can obtain a similar result for the group of
area-preserving diffeomorphisms:
\begin{thmB}
  \label{thm:main-thm-S} 
  Let $M$ be a smooth closed manifold and $f\colon M\carr$ be a
  $C^{1+\theta}$ transitive Anosov diffeomorphism. Let $S$ denote a
  compact surface, $\omega$ be a Lebesgue probability measure on $S$
  and let $\Diff\omega{r}(S)$ be the group of $C^r$-diffeomorphisms of
  $S$ that leave $\omega$ invariant.

  Let $\Phi\colon M\to\Diff{\omega}{1+\alpha}(S)$ be a
  $C^{1+\alpha}$-cocycle\footnote{This means that the induced map
    $M\times S\ni (x,y)\mapsto \Phi(x)(y)\in S$ is $C^{1+\alpha}$.}
  satisfying the periodic orbit obstructions.

  Then, there exists an $\alpha$-H\"older map $u\colon
  M\to\Diff\omega{1}(S)$ such that
  \begin{displaymath}
    \Phi(x)=u(f(x))\circ u(x)^{-1},\quad\forall x\in M.
  \end{displaymath}
\end{thmB}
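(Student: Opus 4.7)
The plan is to reduce Theorem~B to Theorem~\ref{thm:lyap-exp-dom-cobound}: it suffices to show that every ergodic invariant measure of the skew-product
\[
F\colon M\times S\to M\times S,\qquad F(x,y)=(f(x),\Phi(x)(y)),
\]
has vanishing fibered Lyapunov exponents, i.e.\ the two Lyapunov exponents of the restriction of $DF$ to the fiber sub-bundle $TS$ are zero. Note that $F$ is $C^{1+\beta}$ on the compact manifold $M\times S$, with $\beta=\min\{\alpha,\theta\}$.

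The first step is a calculation at periodic orbits of $F$. For any $p\in\Fix(f^n)$ the periodic orbit obstruction gives $\Phi(f^{n-1}(p))\circ\cdots\circ\Phi(p)=\mathrm{id}_S$, so every $(p,y)$ is $F^n$-fixed and the differential $DF^n_{(p,y)}$ is block upper triangular with fiber block equal to $D_y\bigl(\Phi(f^{n-1}(p))\circ\cdots\circ\Phi(p)\bigr)=I$. Consequently the two fibered Lyapunov exponents at every periodic orbit of $F$ vanish.

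Next I exploit area preservation together with $\dim S=2$: the fiber derivative cocycle takes values in $\SL_2^{\pm}(\R)$, so its two Lyapunov exponents sum to zero. Assume for contradiction that some ergodic $\tilde{\mu}$ has a positive fibered exponent $\lambda>0$; then the two fiber exponents are $\pm\lambda\neq 0$ and, combined with the Anosov exponents of the base, $\tilde{\mu}$ is a hyperbolic ergodic measure for the $C^{1+\beta}$ diffeomorphism $F$. Katok's closing lemma (for $C^{1+\alpha}$ diffeomorphisms of compact manifolds with hyperbolic ergodic measures) then produces periodic orbits of $F$ whose Lyapunov exponents approximate those of $\tilde{\mu}$, and in particular have fiber exponents arbitrarily close to $\lambda>0$. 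This contradicts the first step, so all fibered Lyapunov exponents vanish, and Theorem~\ref{thm:lyap-exp-dom-cobound} supplies the desired H\"older transfer $u\colon M\to\Diff{\omega}{1}(S)$.

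The main obstacle I expect is the rigorous application of Katok's closing lemma in this skew-product, non-uniformly hyperbolic setting and the control of the convergence of the fiber-direction Lyapunov data of the approximating periodic orbits. Area preservation and $\dim S=2$ are essential here, since they force the two fiber exponents to sum to zero: a single nonvanishing fiber exponent automatically upgrades $\tilde{\mu}$ to a fully hyperbolic measure, unlocking Katok's machinery without any additional hypotheses on $\tilde{\mu}$ or on the base measure $\pi_{*}\tilde{\mu}$.
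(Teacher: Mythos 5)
Your argument is essentially the paper's own proof: form the skew-product $F$, use area preservation and $\dim S=2$ (via Oseledets) to force fiber exponents $\pm\lambda$, use the Anosov base to conclude the measure is hyperbolic for the $C^{1+\beta}$ map $F$, apply Katok's closing lemma, contradict the periodic orbit obstructions, and then invoke Theorem~\ref{thm:lyap-exp-dom-cobound}. The only (immaterial) difference is that the paper contradicts the mere existence of a hyperbolic periodic point --- since \eqref{eq:POO-bund-map} forces $F^n$ to fix the whole fiber over a periodic base point --- whereas you phrase the contradiction through approximation of the fiber exponents by those of the closing periodic orbits, which are zero for the same reason.
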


The proofs of Theorems A and B consist in two steps. The first one
concerns the vanishing of Lyapunov exponents for the cocycles and
relies heavily in the low-dimensionality of the fibers. The second one
holds in any dimension and can be of independent interest (see
Theorem~\ref{thm:lyap-exp-dom-cobound}).

\subsection*{Acknowledgments}
\label{sec:aknowledgements}

We are grateful to A. Navas and M. Ponce for useful discussions and
bringing these problems to our attention. This paper was finished
during a visit of A.K. to Universidad de la República (Uruguay). 

A.K. is very thankful to R. Markarian for his hospitality. He was
partially supported by CNPq and FAPERJ (Brasil), and Fondo Clemente
Estable (Uruguay).

R.P. was partially supported by CSIC group 618, FCE-3-2011-1-6749 and
Balzan's research project of J.Palis.

\section{Preliminaries and notations}
\label{sec:preel-notat}

\subsection{Hölder continuity}
\label{sec:holder-continuity}

All along this paper, $(M,d)$ will denote a compact metric space. If
$(M^\prime,d^\prime)$ denotes another arbitrary metric space and
$0<\alpha\leq 1$, a map $\psi\colon M\to M^\prime$ is said to be
\emph{$\alpha$-Hölder} whenever
\begin{displaymath}
  \abs{\psi}_\alpha:=\sup_{x\neq y}
  \frac{d^\prime\big(\psi(x),\psi(y)\big)}{d(x,y)^\alpha} <\infty. 
\end{displaymath}
Most of the functions and maps we shall deal with in this paper will
be at least H\"older because, as it was already observed in
\cite{KocCohomC0Inst}, in general $C^0$ regularity is not appropriate
for dynamical cohomology.

When $\alpha<1$, the space of $\alpha$-Hölder maps from $M$ to
$M^\prime$ will be denoted by $C^\alpha(M,M^\prime)$. As usual, we use
the term \emph{Lipschitz} as a synonym of $1$-Hölder, and to avoid
confusions with the differentiable case, we shall write
$C^{\mathrm{\Lip}}(M,M^\prime)$ for the space of Lipschitz functions.

For such a real constant $\alpha$, we can define a new distance on $M$
by
\begin{equation}
  \label{eq:alpha-distance}
  d_\alpha(x,y):=d(x,y)^\alpha,\quad \forall x,y\in M.
\end{equation}
Observe that the topologies induced by $d$ and $d_\alpha$ coincide and
a map $\psi\colon (M,d)\to (M^\prime,d^\prime)$ is $\alpha$-Hölder if
and only if $\psi\colon (M,d_\alpha)\to (M^\prime,d^\prime)$ is
Lipschitz.

\subsection{Borel probability measures}
\label{sec:borel-measures}

Given an arbitrary locally compact metric space $X$, we write $\M(X)$
for the space of Borel probability measures on $X$ and we will always
consider it endowed with (the restriction of) the weak-$\star$
topology. If $Y$ denotes another compact metric space, any continuous
map $h\colon X\to Y$ naturally induces a linear map
$h_\star\colon\M(X)\to\M(Y)$ defined by the following property:
\begin{displaymath}
  \int_Y \phi\dd (h_\star\mu):=\int_X\phi\circ h\dd\mu,
  \quad\forall\phi\in C^0_c(Y),\ \forall\mu\in\M(X).
\end{displaymath}

In this way, if $f\colon X\carr$ is a continuous map, one defines the
space of $f$-invariant measures by
\begin{displaymath}
  \M(f):=\{\mu\in\M(M) : f_\star\mu=\mu\}.
\end{displaymath}

\subsection{Hyperbolic homeomorphisms}
\label{sec:hyper-homeos}

Let $(M,d)$ be a compact metric space and $f\colon M\carr $ be
a homeomorphism. Given any $x\in M$ and $\epsilon >0$, define the
\emph{local stable} and \emph{unstable sets} by
\begin{align*}
  W^s_\eps(x,f) &:= \left\{y\in M : d(f^n(x),f^n(y))\leq\eps,\ \forall
    n \geq 0\right\}, \\
  W^u_\eps(x,f) &:= \left\{y\in M : d(f^n(x),f^n(y))\leq\eps,\ \forall
    n \leq 0\right\},
\end{align*}
respectively. Where there is no risk of ambiguity, we just write
$W^s_\epsilon(x)$ instead of $W^s_\epsilon(x,f)$, and the same holds
for the local unstable set.

Following \cite{AvilaVianaExtLyapInvPrin}, we introduce the following

\begin{definition}
  \label{def:hyperbolic-homeo}
  A homeomorphism $f\colon M\carr$ is said to be \emph{hyperbolic with
    local product structure} whenever there exist constants
  $\eps_0,\delta_0,K_0,\lambda >0$ and functions $\nu_s, \nu_u\colon M
  \to (0,\infty)$ such that the following conditions are satisfied:
  \begin{itemize}
  \item[(h1)] $d(f(y_1),f(y_2)) \leq \nu_s(x) d(y_1,y_2)$, $\forall
    x\in M$, $\forall y_1,y_2 \in W^s_{\eps_0} (x)$;
  \item[(h2)] $d(f(y_1), f(y_2)) \geq \nu_u(x) d(y_1,y_2)$, $\forall
    x\in M$, $\forall y_1,y_2 \in W^u_{\eps_0} (x)$;
  \item[(h3)] $\nu_s^{(n)}(x) := \nu_s(f^{n-1}(x)) \ldots \nu_s(x)
    <K_0e^{-\lambda n}$, $\forall x\in M$, $\forall n\geq 1$;
  \item[(h4)] $\nu_u^{(n)}(x) := \nu_u(f^{n-1}(x)) \ldots \nu_u(x) >
    K_0e^{\lambda n}$, $\forall x\in M$, $\forall n\geq 1$;
  \item[(h5)] If $d(x,y) \leq \delta_0$, then $W^u_{\eps_0}(x)$ and
    $W^s_{\eps_0}(y)$ intersect in a unique point which is denoted by
    $[x,y]$, and it depends continuously on $x$ and $y$.
  \end{itemize}
\end{definition}

\begin{remark}
  For the sake of simplicity of the exposition and to avoid
  unnecessary repetitions, from now on we shall assume that all
  hyperbolic homeomorphisms are transitive and exhibit local product
  structure.
\end{remark}

For such homeomorphisms, one can define the \emph{stable} and
\emph{unstable sets} by
\begin{displaymath}
  W^s(x,f):= \bigcup_{n\geq 0} f^{-n}\big(W^s_\epsilon(f^n(x))\big)
  \quad\text{and}\quad W^u(x,f):= \bigcup_{n\geq 0} 
  f^{n}\big(W^u_\epsilon(f^{-n}(x))\big),
\end{displaymath}
respectively.

Notice that shifts of finite type and basic pieces of Axiom A
diffeomorphisms are particular examples of hyperbolic homeomorphisms
with local product structure (see for instance \cite[Chapter
IV,\S9]{ManeBook} for details).

\begin{remark}
  \label{rem:change-metric}
  For our purposes, it is important to notice that the notion of
  hyperbolicity for homeomorphisms is invariant under Hölder changes
  of metric. More precisely, a homeomorphism $f\colon (M,d)\carr$ is
  hyperbolic if and only $f\colon (M,d_\alpha)\carr$ is hyperbolic,
  for any $\alpha\in (0,1)$, where the distance $d_\alpha$ is defined
  by \eqref{eq:alpha-distance}.
\end{remark}

The following result is proven for locally maximal hyperbolic sets of
smooth diffeomorphisms in \cite[Chapter 6]{KatokHasselblatt}. However,
by inspection on the proof (see \cite[Corollary 6.4.17 and Proposition
6.4.16 ]{KatokHasselblatt}) it can be easily check that the very same
proof works for hyperbolic homeomorphisms with local product structure
(see also \cite[page 1026]{KalininLivThmMat}):

\begin{theorem}[Anosov closing lemma]
  \label{thm:AnosovClosing} Let $f\colon(M,d)\carr$ be a hyperbolic
  homeomorphism\footnote{Remember that by our assumption, hyperbolic
    homeomorphisms have local product structure.}. Then, there exist
  constants $c,\delta_1 >0$ such that for every $x\in M$ and any $n>0$
  satisfying $d(x,f^n(x))< \delta_1$, there exist unique points $p\in
  \Fix(f^n)$ and $y\in M$ such that:
  \begin{enumerate}
  \item $d(f^i(x),f^i(p))\leq c
    d(x,f^n(x))e^{-\lambda\min{\{i,n-i\}}}$;
  \item $d(f^i(p),f^i(y))\leq c d(x,f^n(x))e^{-\lambda i}$;
  \item $d(f^i(x),f^i(y))\leq c d(x,f^n(x))e^{-\lambda(n-i)}$;
  \end{enumerate}
  for every $i\in\{0, \ldots, n-1\}$, where $\lambda>0$ is the
  constant given in Definition~\ref{def:hyperbolic-homeo}.
\end{theorem}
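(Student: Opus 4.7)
The plan is to find $p$ as the unique fixed point of $f^n$ in a small product-structure chart around $x$, set $y:=[x,p]$, and then derive all three estimates directly from the leafwise contraction/expansion rates (h1)--(h4).

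First, I would fix $\delta_1\ll\delta_0$ small enough that, whenever $d(x,f^n(x))<\delta_1$, both the bracket $[x,f^n(x)]$ and the product chart
\begin{displaymath}
  \mathcal{B}_\eta(x):=\bigl\{[a,b] : a\in W^u_\eta(x),\ b\in W^s_\eta(x)\bigr\}
\end{displaymath}
are well defined for a suitable $\eta=O(\delta_1)$. In the coordinates $(a,b)$ on $\mathcal{B}_\eta(x)$, hypotheses (h1)--(h4) imply that $f^n$ contracts the $b$-direction by a factor $\leq K_0 e^{-\lambda n}$ and expands the $a$-direction by at least $K_0 e^{\lambda n}$, while mapping the origin to a point within distance $d(x,f^n(x))$ of the origin. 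A standard hyperbolic fixed-point argument---Banach contraction applied to $f^n$ in the $b$-coordinate and to a bracket-projection of $(f^n)^{-1}$ in the $a$-coordinate---then produces a unique $p\in\mathcal{B}_{c\delta_1}(x)$ with $f^n(p)=p$ and the displacement bound
\begin{displaymath}
  d(x,p)\leq c\, d(x,f^n(x)),
\end{displaymath}
for some $c=c(K_0,\lambda,\eps_0)$.

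Next I would set $y:=[x,p]\in W^u_{\eps_0}(x)\cap W^s_{\eps_0}(p)$, which is well defined by (h5) once $\delta_1$ is small enough, and satisfies $d(y,x)+d(y,p)\leq c'\,d(x,p)\leq c''\,d(x,f^n(x))$ by continuity of the bracket. Iterating (h1) and (h3) along $W^s(p)$ yields
\begin{displaymath}
  d(f^i(p),f^i(y)) \leq \nu_s^{(i)}(p)\, d(p,y) \leq K_0 e^{-\lambda i}\, d(p,y),
\end{displaymath}
which gives (2). For (3), note $f^n(y)\in W^s(p)$ with $d(f^n(y),p)\leq K_0 e^{-\lambda n}d(y,p)$, so
\begin{displaymath}
  d(f^n(x),f^n(y)) \leq d(f^n(x),x)+d(x,p)+d(p,f^n(y)) \leq c_1\, d(x,f^n(x));
\end{displaymath}
then (h2) and (h4), applied backwards along the unstable leaf through $x$ and $y$, give
\begin{displaymath}
  d(f^i(x),f^i(y)) \leq K_0^{-1} e^{-\lambda(n-i)}\, d(f^n(x),f^n(y)) \leq c_2\, d(x,f^n(x))\, e^{-\lambda(n-i)}.
\end{displaymath}
Estimate (1) then follows from the triangle inequality $d(f^i(x),f^i(p))\leq d(f^i(x),f^i(y))+d(f^i(y),f^i(p))$, producing the ``$V$-shape'' rate $e^{-\lambda\min\{i,n-i\}}$.

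The main obstacle is Step 1: in the smooth setting one would linearize and invoke a standard hyperbolic fixed-point theorem, but here $f$ is only a homeomorphism, so existence and uniqueness of $p$ must be extracted purely from the leafwise rates $\nu_s,\nu_u$ and the local product structure (h5). The key observation is that the bracket allows the fixed-point equation to be split into two one-sided contractions---one along each family of leaves---which can be solved independently using (h1)--(h4), with no need for a smooth structure or a linear approximation. This is precisely the content of \cite[Corollary 6.4.17 and Proposition 6.4.16]{KatokHasselblatt} once derivatives are replaced by the rates $\nu_s,\nu_u$.
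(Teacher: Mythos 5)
The paper offers no proof of this lemma at all: it simply observes that the argument of Katok--Hasselblatt (Proposition 6.4.16 and Corollary 6.4.17), with derivatives replaced by the rates $\nu_s,\nu_u$ of (h1)--(h4) and the bracket of (h5), goes through verbatim for hyperbolic homeomorphisms with local product structure, which is precisely the adaptation you sketch (bracket-based fixed point for $p$, then $y=[x,p]$ and the leafwise contraction/expansion estimates). Your outline is correct and takes essentially the same route as the paper, the remaining bookkeeping (keeping iterates inside the $\eps_0$-local sets, handling the constant $K_0$ uniformly in $n$) being exactly what the cited Katok--Hasselblatt argument supplies.
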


\begin{remark}
  \label{rem:x-y-p-AnosovClosing}
  Notice that by uniqueness, we have that $y=[x,p]$, where the
  brackets $[\cdot,\cdot]$ are given by
  Definition~\ref{def:hyperbolic-homeo}.
\end{remark}

\subsection{Cocycles and coboundaries}
\label{sec:cocycles-coboudaries}

Let $G$ denote a topological group whose topology is induced by a
complete distance function $d_G$, and let $f\colon (M,d)\carr$ be a
homeomorphism.

In this work all cocycles we consider will be at least continuous. In
fact, a $G$-\emph{cocycle} (over $f$) is just a continuous map
$\Phi\colon M\to G$. As usual, we use the following notation
\begin{displaymath}
  \Phi^{(n)}(x):=
  \begin{cases}
    e_G,&\text{if } n=0;\\
    \Phi(f^{n-1}(x))\Phi^{(n-1)}(x),&\text{if } n>0;\\
    \big(\Phi^{(-n)}(f^n(x))\big)^{-1},&\text{if } n<0;
  \end{cases}
\end{displaymath}
where $e_G\in G$ denotes the identity element of $G$. We say that
$\Phi$ is a H\"older cocycle when $\Phi\colon (M,d)\to(G,d_G)$ is an
$\alpha$-Hölder map, for some $\alpha\in (0,1]$.

A $G$-cocycle $\Phi$ is said to be a $G$-\emph{coboundary} when there
exists a continuous map $u\colon M \to G$ such that
\begin{displaymath}
  \Phi(x) = u(f(x)) \cdot (u(x))^{-1},\quad \forall x \in M.
\end{displaymath}
Notice that in this case it clearly holds
$\Phi^{(n)}(x)=u(f^n(x))\cdot(u(x))^{-1}$, for any $n\in\Z$ and any
$x\in M$. 

The first natural family of obstructions one encounters for a
$G$-cocycle to be a $G$-coboundary is called \emph{periodic orbit
  obstructions} (just \emph{POO} for short):
\begin{equation}
  \label{eq:POO}\tag{POO}
  \Phi^{(n)}(p) = e_G, \quad\forall n\geq 1,\ \forall p \in
  \Fix(f^n).
\end{equation}

In this work we shall mainly concentrate in the case where
$G=\Diff{}{1}(N)$. To deal with this case, we need a slight
generalization of the concept of cocycles that we introduce in the
following paragraph.

\subsection{Fiber bundle maps and cocycles}
\label{sec:fib-map-cocyc}

Let $N$ denote a compact differentiable manifold and $(M,d)$ be
compact metric space as above. Given any $\alpha\in (0,1]$ and $r\geq
0$, a \emph{$C^{\alpha,r}$-fiber bundle} over $M$ with fiber $N$ is an
object $N\to\mc{E}\xrightarrow{\pi} M$, where $\mc{E}$ is a
topological space and $\pi$ is a surjective $\alpha$-H\"older map such
that there exists a finite open cover $\{U_j\}_1^n$ of $M$ with the
following properties:
\begin{itemize}
\item For each $j\in\{1,\ldots,n\}$, there exists a homeomorphism
  $\varphi_j\colon \pi^{-1}(U_j) \to U_j \times N$;
\item If $U_i\cap U_j \neq\emptyset$, there is an $\alpha$-H\"older
  map $g_{ij}\colon U_i\cap U_j\to\Diff{}r(N)$ such that
  \begin{displaymath}
    \varphi_i \circ \varphi^{-1}_j(x,y) = (x, (g_{ij}(x))(y)),
    \quad\forall (x,y)\in\ (U_i \cap U_j) \times N.
  \end{displaymath}
\end{itemize}

As usual, one defines \emph{the fiber over $x$} by
$\mc{E}_x:=\pi^{-1}(x)\subset\mc{E}$ and, due to the fiber bundle
properties, it can naturally endowed with a $C^r$-differentiable
structure turning it into a $C^r$-manifold $C^r$-diffeomorphic to $N$.

As usual, when $N=\R^d$ and the change of coordinate maps $g_{ij}$
given above are $\alpha$-Hölder and have their image contained in
$\GL_d(\R)$, we say that $\R^d\to\mc{E}\xrightarrow{\pi}M$ is a
$C^\alpha$-vector bundle.

The total space of any $C^{\alpha,r}$-fiber bundle
$N\to\mc{E}\xrightarrow{\pi} M$ can be endowed with a distance
function $d_{\mc{E}}$ constructed as follows:

Let $d_N$ be a distance function compatible with the smooth structure
of the fiber manifold $N$, $\{U_j\}_{1}^n$ and $\{\varphi_j\}_1^n$ be
a local trivialization atlas as above, $L>0$ be the Lebesgue number of the
open covering $\{U_j\}_1^n$ and define
\begin{equation}
  \label{eq:d_E-definition}
  d_{\mc{E}}(\zeta,\eta):= \min\bigg\{
  d\big(\pi(\zeta),\pi(\eta)\big) + \inf_{1\leq j\leq n}
  \left\{d_N\Big(\mathrm{pr}_2\big(\varphi_j(\zeta)\big),
    \mathrm{pr}_2\big(\varphi_j(\eta)\big)\Big)\right\}; L \bigg\}. 
\end{equation}
where $\mathrm{pr}_2\colon M\times N\to N$ denotes the projection on
the second coordinate, and by convention, we declare that
$d_N\Big(\mathrm{pr}_2\big(\varphi_j(\zeta)\big),
\mathrm{pr}_2\big(\varphi_j(\eta)\big)\Big)=\mathrm{diam}_{d_N} N$
whenever $\zeta$ or $\eta$ does not belong to $\pi^{-1}(U_j)$.

A \emph{Riemannian structure} on a $C^{\alpha,r}$-fiber bundle
$\mc{E}$ consists on choosing a Riemannian metric on each fiber
$\mc{E}_x$ which varies H\"older-continuously with $x\in M$.

From now on, we will assume every fiber bundle is endowed with a fixed
Riemannian structure and a distance function constructed as above. It
is important to remark that all the concepts we will consider about
fiber bundles are completely independent of these chosen structures.

In this setting, given another $C^{\alpha,r}$-fiber bundle
$N\to\tilde{\mc{E}}\xrightarrow{\tilde\pi} M$ and a
$C^\alpha$-homeomorphism $f\colon M\carr$, a
\emph{$C^{\alpha,r}$-bundle map} over $f$ is a homeomorphism $F\colon
\mc{E}\to\tilde{\mc{E}}$ satisfying $\tilde\pi \circ F = f \circ \pi$
and such that the map $F_x:= F\big|_{\mc{E}_x}\colon
\tilde{\mc{E}}_x\to\mc{E}_{f(x)}$ is a $C^r$-diffeomorphism, for every
$x\in M$.

As usual, the fiber bundle $N\to \mc{E}=M\times
N\xrightarrow{\mathrm{pr}_1} M$, where $\mathrm{pr}_1\colon M\times
N\to M$ denotes the projection on the first coordinate, is said to be
\emph{trivial.}

In this case, any $C^\alpha$ $\Diff{}r(N)$-cocycle $\Phi\colon
M\to\Diff{}r(N)$ naturally induces a $C^{\alpha,r}$-bundle map over
any $f\in\Homeo{}(M)$ defining $F=F_{\Phi,f}\colon M\times N\carr$ by
\begin{equation}
  \label{eq:induced-skew-product}
  F(x,y):=\big(f(x),\Phi(x)(y)\big),\quad\forall (x,y)\in M\times N.
\end{equation}
Observe that in such a case $F_x=\Phi(x)$, for every $x\in M$.

Such a particular bundle map is usually called the \emph{skew-product}
induced by $\Phi$ and $f$. So, bundle maps can be considered as
generalizations of cocycles taking values in groups of
diffeomorphisms.

\subsection{POO and coboundaries for bundle maps}
\label{sec:poo-cobound-fiber-maps}

We can easily extends the notion of \emph{periodic orbit obstructions}
to fiber bundle maps on fiber bundles which, \emph{a priori}, are not
necessarily trivial:

A bundle map $F\colon\mc{E}\carr$ over $f\colon M\carr$ is said to
satisfy the \emph{periodic orbit obstructions} whenever, for every
$n\geq 1$ it holds
\begin{equation}\tag{POO'}
  \label{eq:POO-bund-map}
  F^n_p(\zeta)=\zeta,\quad \forall p\in\Fix(f^n),\ \forall
  \zeta\in\mc{E}_p. 
\end{equation}

Now we finish this paragraph extending the notion of \emph{coboundary}
for (\emph{à priori} more general) fiber bundle maps: if
$N\to\mc{E}\xrightarrow{\pi} M$ denotes a $C^{\alpha,r}$-fiber bundle,
a $C^{\alpha,r}$-bundle map $F\colon\mc{E}\carr$ is said to be a
\emph{$C^{\alpha,s}$-coboundary}, with $s\leq r$, when there exists a
$C^{\alpha,s}$-bundle map $H\colon\mc{E}\to M\times N$ over the
identity map $id\colon M\carr$ such that the following diagram
commutes:

\begin{equation}
  \label{eq:diag-cobound-trivial}
  \xymatrix{& M\times N\ar[rr]^{f\times
      id_N}\ar'[d][dd]^(.35){\mathrm{pr}_1} & & M\times
    N\ar[dd]^{\mathrm{pr}_1} \\
    \mc{E}\ar[rr] ^(.6){F}\ar[dd]^\pi\ar[ru]^H &  &
    \mc{E}\ar[dd]^(.35){\pi}\ar[ru]^{H} &  
    \\
    & M\ar'[r][rr]^(.3){f} & & M \\
    M\ar[ru]^{id}\ar[rr]^f & & M\ar[ru]^{id}  & }
\end{equation}

Observe that with our definition, any fiber bundle admitting a
coboundary bundle map is \emph{\`a posteriori} trivial (see
Theorem~\ref{thm:lyap-exp-dom-cobound} for details and a motivation
for this definition).

\subsection{Lyapunov exponents for bundle maps}
\label{sec:lyap-expon-fiber-bundle-maps}

Let $\pi\colon\mc{E}\to M$ be a $C^{\alpha,r}$-fiber bundle (endowed
with a Riemannian structure) and consider a $C^{\alpha,r}$-bundle map
$F\colon\mc{E}\carr$ over $f\colon M\carr$. Given any point
$\zeta\in\mc{E}$ and $n\in\Z$, we have the linear map
\begin{equation}
  \label{eq:dpf-definition}
  D\left(F^n\big|_{\mc{E}_{\pi(\zeta)}}\right)_\zeta\colon T_\zeta
  \mc{E}_{\pi(\zeta)} \to T_{F^n(\zeta)}\mc{E}_{f^n(\pi(\zeta))}
\end{equation}
between normed vector spaces, and hence it makes sense to talk about
its norm. For the sake of simplicity, such linear operator will be
just denoted by $\dpf F^n(\zeta)$.

Then, one defines the \emph{extremal Lyapunov exponents} of $F$ along
the fibers at $\zeta\in\mc{E}$ by
\begin{displaymath}
  \begin{split}
    \lambda^+(F,\zeta) &= \lim_{n\to+\infty} \frac{1}{n}
    \log\norm{\dpf F^n(\zeta)}; \\
    \lambda^-(F,\zeta) &= \lim_{n\to+\infty} \frac{1}{n} \log
    \norm{\big(\dpf F^n(\zeta)\big)^{-1}}^{-1};
  \end{split}
\end{displaymath}
whenever these limits exist. As a consequence of the sub-additive
ergodic theorem, it is well-known that theses limits exist
almost-everywhere with respect to any $F$-invariant probability
measure.

Given any $\hat\mu\in\M(F)$, one defines the \emph{extremal Lyapunov
  exponents of} $F$ \emph{with respect to} $\hat \mu$ by
\begin{displaymath}
  \lambda^\pm(F, \hat\mu):=\int_{\mc{E}}\lambda^\pm(F,\zeta)
  \dd\hat\mu(\zeta).
\end{displaymath}

The topological version of the invariance principle of Avila and Viana
\cite{AvilaVianaExtLyapInvPrin} gives strong consequences of vanishing
of the extremal Lyapunov exponents of certain invariant measures.

\subsection{Dominated bundle maps}
\label{sec:dominated-fiber-bundle-maps}

As above, let us suppose $F\colon\mc{E}\carr$ is a
$C^{\alpha,r}$-bundle map over a transitive hyperbolic homeomorphism
$f\colon M\carr$. Given $\beta>0$, we say that $F$ is
$(u,\beta)$-\emph{dominated} whenever there exists $\ell\geq 1$ such
that
\begin{equation}
  \label{eq:u-domination-def}
  \norm{\dpf F^{\ell}(\zeta)}
  \leq\frac{\Big(\nu^{(\ell)}_u\big(\pi(\zeta)\big)\Big)^\beta}{2}, 
  \quad\forall \zeta\in\mc{E},
\end{equation}
where $\nu_u$ is the (multiplicative) cocycle over $f$ given in
Definition~\ref{def:hyperbolic-homeo}.  

Analogously, one says that $F$ is said to be
\emph{$(s,\beta)$-dominated} when there exists $\ell\geq 1$ such that
\begin{equation}
  \label{eq:s-domination-def}
  \norm{(\dpf F^{\ell}(\zeta))^{-1}}^{-1}\geq
  2\Big(\nu_s^{(\ell)}\big(\pi(\zeta)\big)\Big)^\beta,
  \quad\forall\zeta\in\mc{E}.
\end{equation}

And $F$ is just said to be \emph{$\beta$-dominated} if it is
simultaneously both $(s,\beta)$- and $(u,\beta)$-dominated. It is
important to remark that the definition of domination implicitly
assumes that the dynamics on the base space is given by a hyperbolic
homeomorphism $f\colon M\carr$.

The following result is a consequence of the classical graph transform
arguments used in \cite{HirshPughShubInvMan} (see \cite[Proposition
5.1]{AvilaVianaExtLyapInvPrin} for an indication of the proof in this
exact context):
\begin{proposition}
  \label{prop:ConosVariedadesInestables}
  If $F$ is an $(s,1)$-dominated $C^{\mathrm{Lip},1}$-fiber bundle
  over a hyperbolic homeomorphism $f\colon M\righttoleftarrow$, then
  there exists a unique partition of $\mc{E}$
  \begin{displaymath}
    \mc{W}^s = \left\{ \mc{W}^s(\zeta)\subset\mc{E} : \zeta\in
      \mc{E}\right\},
  \end{displaymath}
  which satisfies the following conditions:
  \begin{enumerate}
  \item[(i)] for every $\zeta\in\mc{E}$, $\mc{W}^s(\zeta)$ is the
    image of a Lipschitz section $W^s\big(\pi(\zeta),f\big)\to\mc{E}$
    whose Lipschitz constant is uniform, \ie it can be chosen
    independently of $\zeta$;
  \item[(ii)] it is $F$-invariant, \ie
    \begin{displaymath}
      F\big(\mc{W}^s(\zeta)\big)=\mc{W}^s\big(F(\zeta)\big),
      \quad\forall\zeta\in\mc{E}.
    \end{displaymath}
  \end{enumerate}

  Of course, if $F$ is $(u,1)$-dominated, a completely analogous
  result holds and in such a case the ``unstable'' partition is
  denoted by $\mc{W}^u$.
\end{proposition}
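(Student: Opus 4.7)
The plan is to carry out the classical Hirsch--Pugh--Shub graph-transform construction in this fiber-bundle setting. Replacing $(F,f)$ by $(F^{\ell},f^{\ell})$, where $\ell$ is the integer provided by the $(s,1)$-domination hypothesis, I may assume the pointwise estimate $\|(\dpf F(\zeta))^{-1}\|\leq 1/(2\nu_s(\pi(\zeta)))$ for every $\zeta\in\mc{E}$; since the stable leaves of $f$ and $f^{\ell}$ coincide and (as established below) the invariant partition is unique, no generality is lost.

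Fix $x\in M$ and a local trivialisation of $\mc{E}$ over a neighbourhood of $W^s_{\eps_0}(x,f)$, identifying it with $W^s_{\eps_0}(x,f)\times N$. Since $f$ maps $W^s_{\eps_0}(x,f)$ into $W^s_{\eps_0}(f(x),f)$, the rule
\[
 (\mc{T}\sigma)(y):=F_y^{-1}\bigl(\sigma(f(y))\bigr),\qquad y\in W^s_{\eps_0}(x,f),
\]
defines a graph-transform operator from sections over $W^s_{\eps_0}(f(x),f)$ to sections over $W^s_{\eps_0}(x,f)$. A triangle-inequality computation separating base and fiber increments gives
\[
 \Lip(\mc{T}\sigma)\;\leq\; \|\dpf F^{-1}\|_{\infty}\,\nu_s(x)\,\Lip(\sigma)+C_0 \;\leq\; \tfrac{1}{2}\Lip(\sigma)+C_0,
\]
where $C_0$ bounds the Lipschitz variation of $y\mapsto F_y^{-1}$ (finite because $F$ is a $C^{\Lip,1}$-bundle map and $M$ is compact) and the second inequality uses $(s,1)$-domination. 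Choosing $L\geq 2C_0$ makes the space $\Sigma_L$ of $L$-Lipschitz sections $\mc{T}$-invariant and, being equicontinuous and bounded, $C^0$-compact by Arzel\`a--Ascoli. Given $\zeta\in\mc{E}_x$, I would pick any $L$-Lipschitz section over $W^s_{\eps_0}(f^n(x),f)$ passing through $F^n(\zeta)$, pull it back $n$ times by $\mc{T}$, and extract a $C^0$-limit point; its graph is a local leaf $\mc{W}^s_{\mathrm{loc}}(\zeta)$ through $\zeta$, and $\mc{W}^s(\zeta):=\bigcup_{n\geq 0}F^{-n}\bigl(\mc{W}^s_{\mathrm{loc}}(F^n(\zeta))\bigr)$ is the desired $F$-invariant global leaf.

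The main obstacle is uniqueness of the local leaf through $\zeta$; this is exactly what full $(s,1)$-domination purchases. If $\sigma_1,\sigma_2\in\Sigma_L$ are two $F$-invariant sections over $W^s_{\eps_0}(x,f)$ with $\sigma_1(x)=\sigma_2(x)$, set $g(y):=d(\sigma_1(y),\sigma_2(y))$. Iterating $\sigma_i(y)=F_y^{-1}(\sigma_i(f(y)))$ gives $g(y)\leq \|\dpf F^{-n}\|\cdot g(f^n(y))$; combining the multiplicative bound $\|\dpf F^{-n}\|\lesssim 1/(2^n\,\nu_s^{(n)}(x))$ (with a bounded distortion factor absorbed in $\lesssim$) with the Lipschitz control $g(f^n(y))\leq 2L\,\nu_s^{(n)}(x)\,d(y,x)$ yields $g(y)\lesssim L\,d(y,x)/2^{n-1}\to 0$. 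Hence $\sigma_1\equiv \sigma_2$; the same contraction shows that the $\mc{T}^n$-iteration defining the local leaf is Cauchy and independent of auxiliary choices, so $\mc{W}^s$ is a well-defined $F$-invariant partition with uniformly $L$-Lipschitz sections, as required. The $(u,1)$-dominated case is identical after swapping $F$ with $F^{-1}$ and $W^s$ with $W^u$.
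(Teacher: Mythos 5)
Your overall strategy -- pass to the power $\ell$ given by the domination, run the Hirsch--Pugh--Shub graph transform $\mc{T}$ on spaces of Lipschitz sections over local stable sets, extract limits by Arzel\`a--Ascoli, and globalize by invariance -- is exactly the classical argument the paper invokes (it gives no proof, referring to \cite{HirshPughShubInvMan} and to Proposition 5.1 of \cite{AvilaVianaExtLyapInvPrin}). However, the two estimates on which everything rests are not justified as written. The $(s,1)$-domination is a \emph{pointwise} comparison, $\norm{(\dpf F(\zeta))^{-1}}\leq \big(2\nu_s(\pi(\zeta))\big)^{-1}$, between the fiber co-norm at $\zeta$ and the base contraction \emph{at the same base point} $\pi(\zeta)$. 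Your one-step bound $\Lip(\mc{T}\sigma)\leq \norm{\dpf F^{-1}}_{\infty}\,\nu_s(x)\,\Lip(\sigma)+C_0\leq\tfrac12\Lip(\sigma)+C_0$ instead multiplies the \emph{global} supremum of $\norm{\dpf F^{-1}}$ (which can be enormous, of order $\sup_y (2\nu_s(y))^{-1}$) by the value $\nu_s(x)$ at the particular center $x$; domination gives no control on this product, so the claimed contraction factor $\tfrac12$ does not follow. The same mismatch reappears in the uniqueness step: the fiber estimate naturally produces $\norm{\dpf F^{-n}}$ along the orbit \emph{over} $y$, bounded by $2^{-n}/\nu_s^{(n)}(y)$, while your base estimate produces $\nu_s^{(n)}(x)$; the ``bounded distortion factor absorbed in $\lesssim$'' is precisely the claim that $\nu_s^{(n)}(x)/\nu_s^{(n)}(y)$ is bounded uniformly in $n$ for $y\in W^s_{\eps_0}(x)$, which requires H\"older-type regularity of $\nu_s$ that Definition~\ref{def:hyperbolic-homeo} does not provide. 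As stated, these are genuine gaps, not mere bookkeeping.

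Both gaps close once all quantities are evaluated along the orbit of the \emph{same} base point. Note that $y\in W^s_{\eps_0}(x)$ if and only if $x\in W^s_{\eps_0}(y)$, and more generally any two points of $W^s_{\eps_0/2}(x)$ lie in each other's $\eps_0$-local stable sets; hence (h1) may be applied centered at $y$ (or at $y_1$ for a pair $y_1,y_2$), giving $d(f^n x,f^n y)\leq\nu_s^{(n)}(y)\,d(x,y)$. Pairing this with the pointwise bound $\norm{(\dpf F^{n}(\eta))^{-1}}\leq 2^{-n}/\nu_s^{(n)}(y)$ valid for every $\eta$ in the fiber over $y$, the products telescope to $2^{-n}$ with no distortion factor and no extra regularity of $\nu_s$, and the graph transform (run over $W^s_{\eps_0/2}$, which suffices after iterating) does preserve a space $\Sigma_L$ and is a fiberwise contraction along orbits. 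With that correction your construction, uniqueness argument, and the $(u,1)$ case via $F^{-1}$ go through; I also note that the uniform Lipschitz property one actually obtains (and the only one the paper uses, cf.\ Lemma~\ref{lem:ConosVariedadesInestables}) is along \emph{local} stable sets, the global leaf being defined by saturation under $F^{-n}$ exactly as you do.
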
 

The following result is a combination of Proposition 5.1 and Theorem D
of \cite{AvilaVianaExtLyapInvPrin}:
\begin{theorem}
  \label{thm:inv-principle}
  Let $F\colon\mc{E}\righttoleftarrow$ be a $1$-dominated
  $C^{\mathrm{Lip},1}$-fiber bundle map over a hyperbolic
  homeomorphism $f\colon M\carr$ and let $\hat\mu$ be an $F$-invariant
  probability measure whose projection $\mu:=\pi_\star(\hat\mu)$ to
  $M$ has local product structure and full support. Then, if
  $\lambda^\pm(F,\hat\mu)= 0$, the support of $\hat\mu$ is saturated
  by $\mc{W}^s$ and $\mc{W}^u$.
\end{theorem}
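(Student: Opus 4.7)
The plan is to follow the standard invariance principle scheme: disintegrate $\hat\mu$ along the fibers of $\pi$, show that the conditional measures are invariant under stable and unstable holonomies (the key step, forced by the vanishing of the fibered Lyapunov exponents), and then use the local product structure and full support of $\mu$ on the base to conclude saturation of $\operatorname{supp}(\hat\mu)$.

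First, I would disintegrate $\hat\mu$ along the measurable partition by fibers, obtaining a $\mu$-a.e. defined family $\{\hat\mu_x\}_{x\in M}$ with $\hat\mu_x \in \mathfrak{M}(\mc{E}_x)$ and $\hat\mu=\int \hat\mu_x\,\mathrm{d}\mu(x)$. The invariance $F_\star\hat\mu=\hat\mu$ translates into $(F_x)_\star\hat\mu_x=\hat\mu_{f(x)}$ for $\mu$-a.e. $x$. Next, Proposition~\ref{prop:ConosVariedadesInestables} gives the foliations $\mc{W}^s$ and $\mc{W}^u$ whose leaves are Lipschitz graphs over the stable/unstable sets in $M$; for pairs of nearby base points $x,y$ with $y\in W^s_{\eps_0}(x)$ one defines the stable holonomy $h^s_{x,y}\colon\mc{E}_x\to\mc{E}_y$ as the leafwise slide along $\mc{W}^s$, and analogously $h^u_{x,y}$ using $\mc{W}^u$. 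The goal is to show
\begin{displaymath}
  (h^s_{x,y})_\star\hat\mu_x = \hat\mu_y \quad\text{and}\quad
  (h^u_{x,y})_\star\hat\mu_x = \hat\mu_y
\end{displaymath}
for $\mu$-a.e.\ pair, since by the local product structure of $\mu$ this will propagate into a full $\mc{W}^s,\mc{W}^u$-saturation of the measure.

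The main step, and the one where essentially all the work lies, is the holonomy invariance, which is the actual content of the Avila--Viana invariance principle. I would argue as in \cite{AvilaVianaExtLyapInvPrin}: introduce the $\sigma$-algebra $\mc{F}^u$ on $\mc{E}$ generated by entire $\mc{W}^u$-leaves (and the $f$-unstable $\sigma$-algebra on the base), and consider the sequence of pushforwards $F^n_\star\hat\mu_x$ along a typical $f$-orbit, or equivalently the conditional expectations $\mathbb{E}(\psi\circ F^n\mid \mc{F}^u)$ for bounded continuous $\psi\colon\mc{E}\to\R$. The $(s,1)$-domination together with $\lambda^+(F,\hat\mu)=0$ forces the fiber-restricted iterates $F^n_x$ to be asymptotically equicontinuous along $\mc{W}^u$-leaves in a precise quantitative sense (control of the fibered distortion dominated by the expansion rate $\nu_u^{(n)}$); a classical martingale convergence argument then identifies the limit of these conditional expectations with the actual disintegration, which can only happen if the disintegration is already $h^u$-invariant along generic unstable segments. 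Applying the same reasoning to $F^{-1}$, using $\lambda^-(F,\hat\mu)=0$ and the $(u,1)$-domination, yields $h^s$-invariance of $\{\hat\mu_x\}$ along stable segments. This martingale/distortion step is the hard part, as it requires precisely the quantitative content of the invariance principle.

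Finally, with the holonomy invariance in hand, I conclude as follows. Fix $\zeta\in\operatorname{supp}(\hat\mu)$ and $\zeta'\in\mc{W}^s(\zeta)$; set $x=\pi(\zeta)$ and $y=\pi(\zeta')=y\in W^s(x,f)$. By the full support of $\mu$ and its local product structure, $\mu$-almost every pair $(x',y')$ close to $(x,y)$ realizes the holonomy invariance $(h^s_{x',y'})_\star\hat\mu_{x'}=\hat\mu_{y'}$, and $\hat\mu_{x'}$ has $\zeta$ in its support along a sequence $x'\to x$. Pushing forward through $h^s$ and using continuity of the holonomies (guaranteed by the uniformly Lipschitz graph structure from Proposition~\ref{prop:ConosVariedadesInestables}) shows that $\zeta'\in\operatorname{supp}(\hat\mu_y)\subset\operatorname{supp}(\hat\mu)$; iterating yields saturation along whole (global) stable leaves, and the analogous argument with $\mc{W}^u$ completes the proof.
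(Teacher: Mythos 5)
Be aware that the paper does not prove Theorem~\ref{thm:inv-principle} at all: it is imported verbatim, stated as ``a combination of Proposition 5.1 and Theorem D of \cite{AvilaVianaExtLyapInvPrin}''. Your proposal is, in effect, an outline of the proof of that cited result, and as such it is faithful to the Avila--Viana scheme: disintegrate $\hat\mu$ over $\pi$, show the conditional measures $\{\hat\mu_x\}$ are invariant under the stable and unstable holonomies furnished by Proposition~\ref{prop:ConosVariedadesInestables}, and then use full support and local product structure of $\mu$, together with the uniform Lipschitz continuity of the holonomies and closedness of $\operatorname{supp}\hat\mu$, to upgrade almost-everywhere holonomy invariance to saturation of the support by $\mc{W}^s$ and $\mc{W}^u$. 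Your closing density/continuity argument for the saturation is sound and is essentially how the support statement is deduced in \cite{AvilaVianaExtLyapInvPrin}. The caveat is that the genuinely hard step --- that $\lambda^\pm(F,\hat\mu)=0$ forces a.e.\ holonomy invariance of the disintegration --- is exactly the quantitative content of the invariance principle (the martingale/essential-continuity argument), which you explicitly leave as a black box; so your write-up does not add an independent proof beyond what the paper already delegates to the reference, but neither does it misrepresent the structure of that proof. One small point of bookkeeping: in the paper's conventions, $(s,1)$-domination is what produces the stable lamination $\mc{W}^s$ (hence $h^s$) and $(u,1)$-domination produces $\mc{W}^u$ (hence $h^u$), so your pairing ``$(s,1)$-domination plus $\lambda^+=0$ yields $h^u$-invariance'' has the labels crossed; this is harmless here because $1$-domination and the vanishing of both extremal exponents are assumed simultaneously, but it should be stated with the correct correspondence.
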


\subsection{Solving the cohomological equation in Lie groups}
\label{sec:Kalinin}

We state in this section two results due to Kalinin
\cite{KalininLivThmMat} which will play an important role in our proof
of Theorem~\ref{thm:lyap-exp-dom-cobound}. However, it is interesting
to remark that our Theorem A is completely independent of those
results of Kalinin and, in fact, the results of
Liv\v{s}ic~\cite{LivCertPropHomol,LivsicCohomDynSys} are enough to
deal with the case where the fibers are one-dimensional.

The first result of Kalinin is the following one:

\begin{theorem}[Theorem 1.4 in \cite{KalininLivThmMat}]
  \label{thm:Kalinin1.4}
  Let $A\colon M\to\GL_d(\bb{R})$ be a H\"older cocycle over a
  transitive hyperbolic homeomorphism $f$. Assume that $A$ satisfies
  the \eqref{eq:POO} condition. Then, the cocycle $A$ has zero
  Lyapunov exponents with respect to any $f$-invariant ergodic measure
  on $M$.
\end{theorem}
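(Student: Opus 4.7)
The plan is to argue by contradiction. Suppose there exists an ergodic $\mu\in\M(f)$ for which the top Lyapunov exponent $\chi := \lambda^+(A,\mu)$ is strictly positive; applying the identical argument to $A$ viewed as a cocycle over $f^{-1}$ (which inherits \eqref{eq:POO}) will force the bottom exponent to be non-negative, so all Lyapunov exponents must then vanish. Hence it suffices to rule out $\chi>0$.

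The first step is to introduce Lyapunov (adapted) norms $\|\cdot\|_x$ on $\R^d$ satisfying, for $\mu$-a.e.\ $x$, the one-step estimate $\|A(x)v\|_{f(x)}\leq e^{\chi+\epsilon}\|v\|_x$ together with a tempered distortion bound $\|v\|\leq \|v\|_x \leq C(x)\|v\|$. Since $C(\cdot)$ is only measurable, I would apply Lusin's theorem to extract a compact set $K\subset M$ of $\mu$-measure arbitrarily close to $1$ on which $x\mapsto\|\cdot\|_x$ is continuous and $C$ is uniformly bounded.

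Second, pick $x\in K$ that is $\mu$-generic and realizes $\chi$. By Birkhoff combined with Poincaré recurrence in $K$, there are times $n_k\to\infty$ with $f^{n_k}(x)\in K$ and $d(x,f^{n_k}(x))\to 0$. For each such $n_k$, the Anosov closing lemma (Theorem~\ref{thm:AnosovClosing}) produces $p_k\in\Fix(f^{n_k})$ whose orbit exponentially shadows that of $x$ on both sides, and \eqref{eq:POO} forces $A^{(n_k)}(p_k)=\mathrm{id}$.

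The hard part is to propagate this identity back to $x$ by showing that $\|A^{(n_k)}(x)\|_x$ stays uniformly bounded in $k$, which will contradict $n_k^{-1}\log\|A^{(n_k)}(x)\|_x\to\chi>0$. I would compare the two products via the telescoping identity
\begin{displaymath}
 A^{(n_k)}(x)-A^{(n_k)}(p_k)=\sum_{i=0}^{n_k-1} A^{(n_k-i-1)}\bigl(f^{i+1}(x)\bigr)\cdot\bigl(A(f^i(x))-A(f^i(p_k))\bigr)\cdot A^{(i)}(p_k).
\end{displaymath}
The middle factors decay like $d(x,f^{n_k}(x))^\alpha e^{-\alpha\lambda\min(i,n_k-i)}$ thanks to the Hölder norm of $A$ and the two-sided shadowing estimate. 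The main obstacle is that the outer factors $A^{(i)}(p_k)$ and $A^{(n_k-i-1)}(f^{i+1}(x))$ could a priori grow like $e^{i(\chi+\epsilon)}$ in Lyapunov norm, threatening to overwhelm the Hölder decay. Balancing this requires choosing $\epsilon\ll\alpha\lambda$ and carefully transferring the Lyapunov norms from $x$ to the nearby periodic point $p_k$ (legitimate since $p_k$ accumulates in the Lusin set $K$). After this balancing the summands form a geometric series, yielding the desired uniform bound on $\|A^{(n_k)}(x)\|_x$ and the required contradiction.
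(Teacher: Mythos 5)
Your overall skeleton (adapted Lyapunov norms, a Lusin set $K$, recurrence of a generic point, Anosov closing, and \eqref{eq:POO} forcing $A^{(n_k)}(p_k)=\mathrm{Id}$) is the right one, and the reduction to ruling out $\chi>0$ is fine. The genuine gap is exactly at the step you call the hard part: the telescoping identity cannot yield a uniform bound on $\norm{A^{(n_k)}(x)}$, and the proposed balancing does not repair it. The $i$-th summand is controlled by $\norm{A^{(n_k-i-1)}(f^{i+1}(x))}\cdot\norm{A(f^i(x))-A(f^i(p_k))}\cdot\norm{A^{(i)}(p_k)}$. First, the bound $\norm{A^{(i)}(p_k)}\lesssim e^{i(\chi+\eps)}$ is not available: the Lyapunov norms control the orbit of $x$, and transferring them to the orbit of $p_k$ is precisely the kind of comparison the theorem is about, so it cannot be assumed. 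But even granting it, the two outer factors together contribute roughly $e^{n_k(\chi+\eps)}$ for \emph{every} $i$, while the middle factor is only of order $d(x,f^{n_k}(x))^\alpha e^{-\alpha\lambda\min(i,n_k-i)}$, and the exponential decay disappears near the endpoints: the $i=0$ term is already of size about $d(x,f^{n_k}(x))^\alpha e^{n_k\chi}$, which tends to infinity because $d(x,f^{n_k}(x))\to 0$ with no rate in $n_k$. The best the telescoping gives is $\norm{A^{(n_k)}(x)-\mathrm{Id}}\lesssim d(x,f^{n_k}(x))^\alpha e^{n_k(\chi+\eps)}$, which is perfectly consistent with $\norm{A^{(n_k)}(x)}\approx e^{n_k\chi}$ and produces no contradiction; taking $\eps\ll\alpha\lambda$ is irrelevant, since the obstruction is the factor $e^{n_k\chi}$ itself, not the $\eps$-corrections.

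The comparison has to be run in the opposite direction and multiplicatively, one step at a time: one propagates the good behaviour of the generic orbit onto the shadowing periodic orbit, proving a lower bound of the type $\norm{A^{(n_k)}(p_k)}\geq c\,e^{n_k(\chi-\eps)}$ (so that $A^{(n_k)}(p_k)=\mathrm{Id}$ is contradicted), rather than trying to bound the difference of the two full products. This is what Kalinin does in \cite{KalininLivThmMat}, using Lyapunov norms and an induction on the Lyapunov flag; note that the paper does not reprove Theorem~\ref{thm:Kalinin1.4} but quotes it, and the alternative proof indicated in Remark~\ref{rem:FiberBundleKalinin} derives it from Theorem~\ref{thm:non-zero-exp} applied to the induced action on a projectivized (or Grassmannian) bundle. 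There the key step, Lemma~\ref{Lemma-PuntoCerca}, is precisely a one-step recursion of the form $d_k\leq a_k+b_k d_{k-1}$ with $a_k$ H\"older-small (exponentially small away from the endpoints, by Theorem~\ref{thm:AnosovClosing}) and $b_k<1$ in the adapted norm, which keeps the shadowing fiber orbit inside a small ball and produces a fiberwise contracting periodic point, contradicting \eqref{eq:POO-bund-map}. To salvage your write-up you would need to replace the telescoping estimate by such a step-by-step multiplicative comparison (or Kalinin's flag induction); as written, the argument fails at its decisive step.
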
 

\begin{remark}
  \label{rem:FiberBundleKalinin}
  As it is already observed in \cite{KalininLivThmMat}, the previous
  result does not make use of the fact that the cocycle acts on a
  trivial fiber bundle, and so it holds for arbitrary bundle maps. We
  would like to remark that we can get another proof of
  Theorem~\ref{thm:Kalinin1.4} as consequence of our
  Theorem~\ref{thm:non-zero-exp} below.
\end{remark}

In order to show that the solution of cohomological equations are
sufficiently regular, we must show that the holonomy maps of certain
foliations are smooth. We will show this proving that certain linear
cocycles are indeed coboundaries. As it was already mentioned above,
that can be done invoking ``classical'' Liv\v{s}ic theorem when
cocyles act on one-dimensional manifolds. In higher dimensions, we
need the following Liv\v{s}ic type theorem for linear cocycles due to
Kalinin:

\begin{theorem}[Theorem 1.1 in \cite{KalininLivThmMat}]
  \label{thm:Kalinin1.1}
  Let $A\colon M \to\GL_d(\R)$ a H\"older cocycle over a transitive
  hyperbolic homeomorphism $f\colon M\carr$. Assume that $A$ satisfies
  \eqref{eq:POO} condition. Then, for every $x_0\in M$, there exists a
  unique Hölder map $U\colon M\to\GL_d(\R)$ such that
  \begin{displaymath}
    A(x)=U(f(x))U(x)^{-1}, \quad\forall x\in M,
  \end{displaymath}
  and $U(x_0)=Id_{\R^d}$. 

  Moreover, there exists a constant $C>0$ depending only on $f$ such
  that
  \begin{displaymath}
    \abs{U}_\alpha\leq C\abs{A}_\alpha.
  \end{displaymath}
\end{theorem}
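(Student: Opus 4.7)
The plan is to follow the classical Livšic strategy---define $U$ on a dense orbit by the obvious formula $U(f^n(x_0)) = A^{(n)}(x_0)$, prove it is Hölder on this orbit via the Anosov closing lemma, and extend by density---while using Theorem~\ref{thm:Kalinin1.4} to defeat the non-commutative distortion coming from conjugations.

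First I would pick $x_0$ to be a transitive point (using transitivity of $f$) which is moreover Birkhoff-generic with respect to some ergodic invariant measure of full support, and set $U(f^n(x_0)) := A^{(n)}(x_0)$ for every $n \in \Z$. A routine check shows $A(y) = U(f(y)) U(y)^{-1}$ on the orbit $\mc{O}(x_0)$. By Theorem~\ref{thm:Kalinin1.4} every ergodic invariant measure gives zero Lyapunov exponents for $A$, and Kingman's theorem then provides a full-measure set of points $x$ along which $\tfrac1n \log \norm{A^{(n)}(x)} \to 0$ and $\tfrac1n \log \norm{A^{(n)}(x)^{-1}} \to 0$; by a tempering argument (Pesin--Oseledets) I can arrange $x_0$ so that for every $\eps>0$ there is $C_\eps$ with
\begin{equation*}
  \max\bigl(\norm{A^{(n)}(x_0)},\, \norm{A^{(n)}(x_0)^{-1}}\bigr) \leq C_\eps e^{\eps\abs n}, \qquad \forall n\in\Z.
\end{equation*}

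Next, to show $U$ is Hölder on $\mc{O}(x_0)$, take $y_1=f^m(x_0), y_2=f^n(x_0)$ with $n>m$, set $x=f^m(x_0)$, $k=n-m$, $\delta = d(x, f^k(x)) = d(y_1,y_2)$, assumed $<\delta_1$. Write
\begin{equation*}
  U(y_2)U(y_1)^{-1} - I = A^{(k)}(x) - I.
\end{equation*}
The Anosov closing lemma (Theorem~\ref{thm:AnosovClosing}) produces $p\in\Fix(f^k)$ with $d(f^i(x),f^i(p)) \leq c\delta e^{-\lambda\min(i,k-i)}$. Since \eqref{eq:POO} gives $A^{(k)}(p) = I$, a telescoping identity yields
\begin{equation*}
  A^{(k)}(x) - I = \sum_{i=0}^{k-1} A^{(k-i-1)}(f^{i+1}(x))\bigl(A(f^i(x)) - A(f^i(p))\bigr) A^{(i)}(p).
\end{equation*}
Using that $A$ is $\alpha$-Hölder on $M$, the $i$-th term is controlled by $\abs{A}_\alpha (c\delta)^\alpha e^{-\alpha\lambda\min(i,k-i)} \cdot \norm{A^{(k-i-1)}(f^{i+1}(x))} \cdot \norm{A^{(i)}(p)}$. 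The main task is to bound these two norms subexponentially. For $\norm{A^{(i)}(p)}$, the telescoping $A^{(i)}(p) = A^{(i)}(x) + \mathrm{error}$ (bootstrapped inductively from the very estimate being proved, restricted to scales smaller than $\delta$), combined with the subexponential control of $\norm{A^{(i)}(x)}$ inherited from $x_0$ along the orbit, keeps these two factors bounded by $C_\eps e^{\eps k}$. Choosing $\eps < \alpha\lambda/2$, the geometric series $\sum_i e^{(2\eps - \alpha\lambda)\min(i,k-i)}$ converges and we obtain $\norm{A^{(k)}(x) - I} \leq C \delta^\alpha$, with a constant depending only on $f$ and $\abs{A}_\alpha$. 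This is precisely Hölder continuity of $U$ on $\mc{O}(x_0)$, and the constant $C$ is linear in $\abs{A}_\alpha$ as claimed.

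The honest obstacle is the circularity in controlling $\norm{A^{(i)}(p)}$ and $\norm{A^{(k-i-1)}(f^{i+1}(x))}$: these are a priori as large as the full non-commutative cocycle allows, and bounding them requires exactly the Hölder estimate one is trying to prove. Kalinin's trick here is to run a bootstrap: first establish the bound $\norm{A^{(k)}(x) - I} \leq Q(\delta)$ for \emph{some} function $Q(\delta)\to 0$ as $\delta\to 0$, using only the subexponential tempered estimates at $x_0$ plus the Anosov closing lemma; this upgrades the norm bounds along shadowing periodic orbits to $e^{\eps k}$ with $\eps$ arbitrarily small, and then re-inject into the telescoping to obtain the honest Hölder bound. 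Once this is done, $U$ extends uniquely by uniform continuity to a Hölder map on $M = \cl\mc{O}(x_0)$, satisfying the cohomological equation by continuity. Uniqueness is then immediate: two solutions with the same value at $x_0$ differ by an $f$-invariant continuous map $V$ with $V(x_0) = Id$, which by transitivity is identically $Id$.
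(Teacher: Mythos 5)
You are attempting to prove a statement that the paper itself does not prove: Theorem~\ref{thm:Kalinin1.1} is quoted verbatim from Kalinin \cite{KalininLivThmMat} and used as a black box, so there is no internal proof to compare against; your sketch is in effect an attempt to reprove Kalinin's theorem, and its overall architecture (define $U$ along a dense orbit, use the Anosov closing lemma plus \eqref{eq:POO}, and use Theorem~\ref{thm:Kalinin1.4} to kill the non-commutative distortion) is indeed the right one. However, the central analytic step has a genuine gap. First, the subexponential control you invoke is obtained only at the single Oseledets-regular point $x_0$. That gives $\norm{A^{(i)}(f^m(x_0))}\leq C_\eps e^{\eps(2m+i)}$, with $m$ the \emph{absolute} time along the orbit, which is unrelated to the return time $k$ and to $\delta$, and it gives nothing at all along the shadowing periodic orbit $p$, which is not generic for your chosen measure (knowing $A^{(k)}(p)=Id$ does not bound the intermediate products $A^{(i)}(p)$). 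What is actually needed, and what Kalinin establishes, is the \emph{uniform} estimate $\sup_{x\in M}\max\bigl(\norm{A^{(n)}(x)},\norm{(A^{(n)}(x))^{-1}}\bigr)\leq C_\eps e^{\eps n}$ for all $n$, deduced from the vanishing of the exponents for \emph{all} invariant measures via a uniform (subadditive) convergence result; pointwise Kingman plus tempering at one point cannot replace it.

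Second, even granting the uniform bound, your series does not close: bounding the $i$-th telescoping term by $C_\eps^2 e^{\eps i}e^{\eps(k-i)}\abs{A}_\alpha(c\delta)^\alpha e^{-\alpha\lambda\min(i,k-i)}$ yields a total of order $e^{\eps k}\delta^\alpha$, not the convergent series $\sum_i e^{(2\eps-\alpha\lambda)\min(i,k-i)}$ you wrote; since $k$ can be arbitrarily large for a fixed moderate $\delta$, the factor $e^{\eps k}$ cannot be removed by taking $\eps$ small or by ``re-injecting'' a qualitative bound $Q(\delta)$ (your bootstrap paragraph names this obstacle but supplies no mechanism). One must exploit $A^{(k)}(p)=Id$ structurally, e.g.\ writing $A^{(i)}(p)=\bigl(A^{(k-i)}(f^i(p))\bigr)^{-1}$ so that the partial products along $p$ are bounded by $C_\eps e^{\eps\min(i,k-i)}$, and handling the corresponding partial products along $x$ for $i>k/2$ through $A^{(k)}(x)$ itself with an absorption of $\norm{A^{(k)}(x)-I}$ (or, as Kalinin does, comparing through the heteroclinic point $y=[x,p]$ in two one-sided steps), so that every term genuinely decays in $\min(i,k-i)$. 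Without this, the key H\"older estimate $\norm{A^{(k)}(x)-I}\leq C\delta^\alpha$ with $C$ independent of $k$ is not established. The remaining points (normalizing so that $U(x_0)=Id$ for an arbitrary $x_0$ by right translation, and uniqueness via the $f$-invariance of $U_1^{-1}U_2$ and transitivity) are fine.
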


The following consequence of the previous result will be needed later:

\begin{proposition}
  \label{pro:Kalinin-VariacionContinua} Let $f\colon M\carr$ be a
  hyperbolic homeomorphism and $(A_t\colon M \to\GL_d(\R))_{t\in N}$
  be a continuous family ($N$ is a topological manifold) of
  $\alpha$-H\"older cocycles satisfying \eqref{eq:POO} condtion. Then,
  there exists a continuous family of $\alpha$-H\"older transfer
  functions $(U_t\colon M\to\GL_d(\R))_{t\in N}$ satisfying
  \begin{displaymath}
    A_t(x)=U_t(f(x))U_t(x)^{-1},\quad\forall t\in N,\ \forall x\in M. 
  \end{displaymath}
\end{proposition}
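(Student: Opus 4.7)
The plan is to apply Theorem \ref{thm:Kalinin1.1} pointwise in $t$ and then extract continuity in $t$ via a compactness argument combined with the uniqueness clause of that theorem. Fix a base point $x_0\in M$. For each $t\in N$, Theorem \ref{thm:Kalinin1.1} produces a unique $\alpha$-Hölder map $U_t\colon M\to\GL_d(\R)$ with $U_t(x_0)=Id_{\R^d}$ and $A_t(x)=U_t(f(x))U_t(x)^{-1}$, satisfying the uniform estimate $\abs{U_t}_\alpha\leq C\abs{A_t}_\alpha$. Since $(A_t)_{t\in N}$ is a continuous family and $N$ is locally compact, $\abs{A_t}_\alpha$ is locally bounded on $N$, and hence so is $\abs{U_t}_\alpha$.

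The main subtlety is that a uniform limit of $\GL_d(\R)$-valued maps need not remain invertible, so I first rule this out. Rewriting the cohomological equation as $U_t(x)^{-1}=A_t(x)^{-1}U_t(f(x))^{-1}$ shows that $V_t:=U_t^{-1}$ is the transfer function, normalized at $x_0$, for the cocycle $A_t^{-1}$, which also satisfies \eqref{eq:POO}. Applying Theorem \ref{thm:Kalinin1.1} to $A_t^{-1}$ yields $\abs{U_t^{-1}}_\alpha\leq C\abs{A_t^{-1}}_\alpha$, so the inverses are likewise uniformly Hölder on compact neighborhoods of any fixed $t_0\in N$.

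To verify continuity at $t_0$, take an arbitrary sequence $t_n\to t_0$. Both $\{U_{t_n}\}$ and $\{U_{t_n}^{-1}\}$ are equicontinuous and coincide with $Id_{\R^d}$ at $x_0$, so they are uniformly bounded in $C^0$. By Arzelà-Ascoli, after passing to a subsequence, $U_{t_{n_k}}\to V$ and $U_{t_{n_k}}^{-1}\to V'$ uniformly on $M$; passing to the limit in $U_{t_{n_k}}(x)\cdot U_{t_{n_k}}(x)^{-1}=Id_{\R^d}$ gives $V(x)V'(x)=Id_{\R^d}$, so $V$ does take values in $\GL_d(\R)$. Passing to the limit in $A_{t_{n_k}}(x)=U_{t_{n_k}}(f(x))U_{t_{n_k}}(x)^{-1}$ and using continuity of $t\mapsto A_t$ yields $A_{t_0}(x)=V(f(x))V(x)^{-1}$ with $V(x_0)=Id_{\R^d}$. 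The uniqueness clause in Theorem \ref{thm:Kalinin1.1} forces $V=U_{t_0}$. Since every subsequence of $(U_{t_n})$ has a further subsequence converging uniformly to $U_{t_0}$, the full sequence converges uniformly to $U_{t_0}$, which together with the uniform Hölder bound gives continuity of $t\mapsto U_t$ into the $\alpha$-Hölder maps (with convergence in $C^{\alpha'}$ for every $\alpha'<\alpha$ following by interpolation).

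The only nontrivial point is precisely the possible degeneration of the Arzelà-Ascoli limit $V$ out of $\GL_d(\R)$; this is neutralized by applying Kalinin's theorem symmetrically to both $A_t$ and $A_t^{-1}$. Everything else is standard equicontinuity plus the uniqueness statement.
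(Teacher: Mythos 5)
Your overall strategy (pointwise application of Theorem~\ref{thm:Kalinin1.1}, uniform H\"older bounds, Arzel\`a--Ascoli, and the uniqueness clause to identify the limit) is different from the paper's and could be made to work, but the step you single out as ``the only nontrivial point'' is handled incorrectly. The identity you use is false in a nonabelian group: from $A_t(x)=U_t(f(x))U_t(x)^{-1}$ one gets $U_t(x)^{-1}=U_t(f(x))^{-1}A_t(x)$, \emph{not} $U_t(x)^{-1}=A_t(x)^{-1}U_t(f(x))^{-1}$. Consequently $V_t:=U_t^{-1}$ is not a transfer function for $A_t^{-1}$: with $V_t$ one has $V_t(f(x))V_t(x)^{-1}=U_t(f(x))^{-1}U_t(x)$, which is only conjugate to $A_t(x)^{-1}=U_t(x)U_t(f(x))^{-1}$. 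Worse, the cocycle $x\mapsto A_t(x)^{-1}$ over $f$ need not satisfy \eqref{eq:POO} at all: at $p\in\Fix(f^n)$ its $n$-step product is the inverse of the \emph{reverse-ordered} product $A_t(p)A_t(f(p))\cdots A_t(f^{n-1}(p))$, and $X_1\cdots X_n=Id$ does not imply $X_n\cdots X_1=Id$ for $n\geq 3$. So Theorem~\ref{thm:Kalinin1.1} cannot be applied to $A_t^{-1}$ as you claim, and your uniform bound on $\abs{U_t^{-1}}_\alpha$ is unjustified as written.

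The gap is patchable in two ways. Either replace $A_t^{-1}$ by the inverse-transpose cocycle $x\mapsto\big(A_t(x)^{T}\big)^{-1}$, which \emph{does} satisfy \eqref{eq:POO} (transposing reverses the order of the product) and has normalized transfer function $\big(U_t^{T}\big)^{-1}$, yielding the desired uniform H\"older bound on $U_t^{-1}$; or, more simply, drop the invertibility discussion altogether: pass to the limit in the inverse-free identity $U_{t_{n_k}}(f(x))=A_{t_{n_k}}(x)U_{t_{n_k}}(x)$ to get $V(f(x))=A_{t_0}(x)V(x)$, and then note that $W:=U_{t_0}^{-1}V$ is continuous and $f$-invariant, hence constant by transitivity, equal to $W(x_0)=Id$; this gives $V=U_{t_0}$ (and invertibility of $V$ for free) without invoking the uniqueness clause on a possibly degenerate limit. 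For comparison, the paper's proof avoids compactness arguments entirely: it picks $x_0$ with dense forward orbit, observes that $U_t(f^k(x_0))=A_t^{(k)}(x_0)$ depends continuously on $t$ for each fixed $k$, and then uses the uniform bound $\abs{U_t}_\alpha\leq C\abs{A_t}_\alpha$ to promote closeness of $U_t$ and $U_{t_0}$ on a $\delta$-dense orbit segment to $C^0$-closeness on all of $M$; your route, once repaired, buys a soft argument needing only uniqueness and equicontinuity, at the cost of the extra care about limits in $\GL_d(\R)$.
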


\begin{proof} 
  Consider a point $x_0\in M$ whose forward orbit by $f$ is dense and
  let $U_t: M \to\GL_d(\R)$ be the unique solution of the
  cohomological equation $A_t(x) = U_t(f(x))U_t(x)^{-1}$ with
  $U_t(x_0)=Id$, given by Theorem~\ref{thm:Kalinin1.1}. Notice that
  $\abs{U_t}_\alpha$ is uniformly bounded on $t\in N$.

  Now we have to show that $U_t$ depends continuously on $t\in N$. Let
  us fix $t_0\in N$ and $\eps>0$. Let $n$ be sufficiently large so
  that the segment of orbit $x_0, \ldots, f^n(x_0)$ is $\delta$-dense,
  where
  \begin{displaymath}
    \delta:= \eps \left(6C\max_{t\in
        N}\abs{A_t}_\alpha\right)^{-\frac{1}{\alpha}},
  \end{displaymath}
  and $C$ is the positive constant given by
  Theorem~\ref{thm:Kalinin1.1}.

  By continuity of the family $A_t$ on $t$, and observing that
  $U_t(f^k(x_0))=A_t^{(k)}(x_0)$ for every $k\geq 0$, there exists a
  neighborhood $V$ of $t_0$ such that one has
  \begin{displaymath}
    \norm{U_t\big(f^k(x_0)\big) - U_{t_0}\big(f^k(x_0)\big)}
    \leq\frac{\eps}{2},\quad\text{for } 0\leq k\leq n,\ \forall t\in
    V. 
  \end{displaymath}
  
  Invoking the uniform H\"older estimates, we deduce that the
  $C^0$-distance between the functions $U_t$ and $U_{t_0}$ is smaller
  than $\eps$, for every $t\in V$.
\end{proof}

\section{Domination, zero Lyapunov exponents and coboundaries }
\label{sec:domination-Lyap-exp}

In this section we study the relation between domination, nullity of
Lyapunov exponents and cohomology of bundle maps. The main result we
present here is the following

\begin{theorem}
  \label{thm:lyap-exp-dom-cobound}
  Let $N\to\mc{E}\xrightarrow{\pi} M$ be a $C^{\alpha,1}$-fiber bundle
  and $F\colon\mc{E}\carr$ be $C^{\alpha,1}$-bundle map over a
  $\alpha$-H\"older hyperbolic homeomorphism $f\colon M\carr$. Let us
  assume $F$ satisfies \eqref{eq:POO-bund-map} condition. Then the
  following statements are equivalent:
  \begin{enumerate}[(i)]
  \item $\lambda^\pm(F,\hat\mu)=0$, for all $\hat\mu\in\M(F)$;
  \item $F$ is $\alpha$-dominated;
  \item $F$ is a $C^{\alpha,1}$-coboundary and, according to
    \S~\ref{sec:poo-cobound-fiber-maps}, the fiber bundle
    $N\to\mc{E}\xrightarrow{\pi} M$ admits a
    $C^{\alpha,1}$-trivialization.
  \end{enumerate}
\end{theorem}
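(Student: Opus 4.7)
The plan is to establish the equivalence cyclically via $(iii)\Rightarrow(i)\Rightarrow(ii)\Rightarrow(iii)$. The first implication is essentially free: if $H\colon\mc{E}\to M\times N$ realizes $F$ as a coboundary over $f\times id_N$, then on each fiber $F^n|_{\mc{E}_x}=H_{f^n(x)}^{-1}\circ H_x$, so $\|\dpf F^n(\zeta)\|\leq\|DH^{-1}\|_{C^0}\cdot\|DH\|_{C^0}$ uniformly in $n$ and $\zeta$, and consequently $\lambda^\pm(F,\hat\mu)=0$ for every $\hat\mu\in\M(F)$.

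For $(i)\Rightarrow(ii)$ I would argue by contradiction using a compactness and empirical-measure argument. The sequence $n\mapsto\log\|\dpf F^n(\zeta)\|$ is subadditive along $F$-orbits. If $(u,\alpha)$-domination failed, there would exist $n_k\to\infty$ and $\zeta_k\in\mc{E}$ with $\|\dpf F^{n_k}(\zeta_k)\|>\tfrac12(\nu_u^{(n_k)}(\pi(\zeta_k)))^\alpha\geq\tfrac{K_0^\alpha}{2}e^{\alpha\lambda n_k}$; extracting a weak-$\star$ limit of the empirical measures $\tfrac{1}{n_k}\sum_{j=0}^{n_k-1}\delta_{F^j(\zeta_k)}$ yields an $F$-invariant $\hat\mu$ with $\lambda^+(F,\hat\mu)\geq\alpha\lambda>0$, contradicting $(i)$; the same argument applied to $(\dpf F^n)^{-1}$ gives $(s,\alpha)$-domination.

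The heart of the proof is $(ii)\Rightarrow(iii)$. I would begin by passing to the metric $d_\alpha$ (Remark 2.10), which preserves hyperbolicity of $f$, makes $F$ into a $C^{\mathrm{Lip},1}$-bundle map, and turns $\alpha$-domination into $1$-domination, so Proposition 2.11 supplies Lipschitz stable and unstable laminations $\mc{W}^{s/u}$ of $\mc{E}$ with continuous holonomies $h^{s/u}_{x,y}\colon\mc{E}_x\to\mc{E}_y$ between nearby fibers. To upgrade these holonomies to $C^1$ fiberwise diffeomorphisms with uniform $C^1$-Hölder control, I would apply Kalinin's Theorem~\ref{thm:Kalinin1.1} (\emph{cf.} Remark~\ref{rem:FiberBundleKalinin}) to the vertical derivative cocycle $DF|_{\mathrm{vert}}\colon\mc{E}\to\GL_d(\R)$: this is a Hölder linear cocycle over $F$, and \eqref{eq:POO-bund-map} for $F$ forces the analogous periodic orbit obstructions for $DF|_{\mathrm{vert}}$, because at $\zeta\in\mc{E}_p$ with $p\in\Fix(f^n)$ one has $F^n_p=id_{\mc{E}_p}$ and hence $D(F^n_p)_\zeta=id$. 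The resulting trivialization of $DF|_{\mathrm{vert}}$ controls the telescoping products that define and regularize the holonomies.

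Finally, I would build $H$ by orbit extension. Fix $x_0\in M$ with dense forward $f$-orbit and any $C^1$-diffeomorphism $H_{x_0}\colon\mc{E}_{x_0}\to N$, and set $H_{f^n(x_0)}:=H_{x_0}\circ(F^n|_{\mc{E}_{x_0}})^{-1}$ along the orbit. For an arbitrary $x\in M$ and a subsequence $f^{n_k}(x_0)\to x$, the Anosov closing lemma (Theorem~\ref{thm:AnosovClosing}) produces nearby periodic points $p_k$ on which POO applies, and the resulting cancellations force $H_{f^{n_k}(x_0)}$, transported to $\mc{E}_x$ via the holonomies $h^{s/u}$, to converge in the $C^{\alpha,1}$ topology to a well-defined $H_x\colon\mc{E}_x\to N$. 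The main obstacle is precisely in making this last convergence quantitative: one has to combine the uniform $C^1$-Hölder estimate on Kalinin's transfer function, the uniform Lipschitz bound on the $h^{s/u}$, and the exponential shadowing from Theorem~\ref{thm:AnosovClosing}, with POO providing the exact cancellation of the cocycle obstructions generated when two approximating sequences are closed against a common periodic orbit. Assembling these estimates yields the desired $C^{\alpha,1}$-trivialization $H\colon\mc{E}\to M\times N$, which in particular shows that $\mc{E}$ is \emph{a posteriori} trivial.
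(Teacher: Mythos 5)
Your cyclic scheme and the easy implications match the paper, but the core step $(ii)\Rightarrow(iii)$ has a genuine gap where you invoke Kalinin. You propose to apply Theorem~\ref{thm:Kalinin1.1} to the vertical derivative cocycle $\dpf F\colon\mc{E}\to\GL_d(\R)$ viewed as a H\"older linear cocycle \emph{over $F$}. Kalinin's theorem requires the base dynamics to be a transitive hyperbolic homeomorphism, and $F\colon\mc{E}\carr$ is not one: it is a skew product with neutral fiber direction (indeed, under \eqref{eq:POO-bund-map} whole fibers over periodic base points are pointwise fixed), so neither the hypotheses nor the closing-lemma machinery behind Kalinin's result are available over $F$. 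The paper's way around this is precisely the idea missing from your outline: first one proves (Proposition~\ref{pro:PartialHypImplicaGraficos}) that $1$-domination plus \eqref{eq:POO-bund-map} forces the closure of every $F$-orbit to be the graph of a Lipschitz section $V_\zeta\colon M\to\mc{E}$ (this uses the Anosov closing lemma, the Lipschitz laminations $\mc{W}^{s/u}$, and the periodic orbit obstructions to show that base returns force fiber returns), then that these graphs are $\mc{W}^{s/u}$-saturated and form a lamination (Proposition~\ref{pro:su-saturation-V-zeta}). Only after restricting the fiber derivative to the tangent spaces along such an invariant section does one obtain a linear cocycle $DF^\zeta$ over $f$ itself, to which Theorem~\ref{thm:Kalinin1.1} (and Proposition~\ref{pro:Kalinin-VariacionContinua}) legitimately applies; this is what yields uniform bounds on $\norm{\dpf F^n}$ and the $C^1$ regularity of the holonomies. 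Relatedly, your construction of $H$ by extending $H_{f^n(x_0)}$ along a dense orbit defers exactly this point: the well-definedness and convergence of the limits is not a quantitative afterthought but the content of the orbit-closure/saturation propositions, and without them your "main obstacle" paragraph is the proof rather than a remark.

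A smaller issue occurs in $(i)\Rightarrow(ii)$: from $\norm{\dpf F^{n_k}(\zeta_k)}\geq\tfrac{K_0^\alpha}{2}e^{\alpha\lambda n_k}$ and a weak-$\star$ limit of the empirical measures $\tfrac{1}{n_k}\sum_j\delta_{F^j(\zeta_k)}$ you cannot directly conclude $\lambda^+(F,\hat\mu)\geq\alpha\lambda$, because $\log\norm{\dpf F^n}$ is only subadditive along orbits and the exponent of the limit measure is not controlled by integrating $\log\norm{\dpf F}$. The paper fixes this by lifting to the projectivized fiber tangent bundle, where $\psi([v])=\log\big(\norm{\dpf F\cdot v}/\norm{v}\big)$ is an additive cocycle over $[\dpf F]$, choosing norming vectors $[v_{n_k}]$, and pushing the limit measure back down (Proposition~\ref{prop:zero-exp-impl-dom}); you would need this projectivization (or an equivalent subadditive variational argument) to make your compactness step correct. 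The implication $(iii)\Rightarrow(i)$ is fine and coincides with the paper's.
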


\begin{remark}
  \label{rem:d-to-d-alpha} 
  We shall use a rather classical trick (see for example
  \cite{VianaAlmostAllCocyc}) which allows us to reduce the general
  $\alpha$-H\"older case to the Lipschitz one: if
  $N\to\mc{E}\xrightarrow{\pi}(M,d)$ is a $C^{\alpha,1}$-fiber bundle,
  $f\colon (M,d)\carr $ is a hyperbolic homeomorphism and
  $F\colon\mc{E}\carr$ is a $C^{\alpha,1}$-fiber bundle map which is
  $\alpha$-dominated, then changing the metric $d$ by $d_\alpha$ (see
  \eqref{eq:alpha-distance}) on $M$, we obtain a $C^{\Lip,1}$-fiber
  bundle, $f$ continues to be hyperbolic (see
  Remark~\ref{rem:change-metric}) and $F$ turns to be a
  $C^{\Lip,1}$-bundle map which is $1$-dominated. Moreover, $F$ is a
  $C^{\Lip,1}$-coboundary when $M$ is endowed with the $d_\alpha$
  metric if and only if it is a $C^{\alpha,1}$-coboundary when $M$ is
  equipped with $d$.
\end{remark}

By Remark~\ref{rem:d-to-d-alpha}, in order to simplify the notation
from now on and until the end of this section we shall assume that
$\alpha=1=\Lip$.

To start with the proof of Theorem~\ref{thm:lyap-exp-dom-cobound}, we
first show that $(i)$ implies $(ii)$. This result maybe belongs to the
folklore, but since our context is slightly different from usual ones,
we decided to include an outline of the proof:

\begin{proposition}
  \label{prop:zero-exp-impl-dom} 
  Let us assume that
  \begin{displaymath}
    \lambda^\pm(F,\hat\mu)=0,\quad\forall\hat\mu\in\M(F),
  \end{displaymath}
  Then $F$ is $1$-dominated.
\end{proposition}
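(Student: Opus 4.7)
The plan is to reduce, via Remark~\ref{rem:d-to-d-alpha}, to the Lipschitz case ($\alpha=1$), and then deduce $1$-domination from a \emph{uniform} sub-additive ergodic argument applied to the norms of the fibered derivatives. Concretely, I would work with the two functions on $\mc{E}$ defined by
\[
a_n(\zeta) := \log\norm{\dpf F^n(\zeta)}, \qquad b_n(\zeta) := \log\norm{\bigl(\dpf F^n(\zeta)\bigr)^{-1}}.
\]
Because $F$ is a $C^{\Lip,1}$-bundle map and $\mc{E}$ is compact, both are continuous on $\mc{E}$, and the chain rule together with sub-multiplicativity of the operator norm show that $\{a_n\}$ and $\{b_n\}$ are continuous sub-additive cocycles over the continuous map $F\colon\mc{E}\righttoleftarrow$.

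First, the sub-additive (Kingman) ergodic theorem identifies, for every $\hat\mu\in\M(F)$, the extremal fibered Lyapunov exponents as
\[
\lambda^+(F,\hat\mu)=\inf_{n\geq 1}\tfrac{1}{n}\!\int_{\mc{E}} a_n\dd\hat\mu,\qquad -\lambda^-(F,\hat\mu)=\inf_{n\geq 1}\tfrac{1}{n}\!\int_{\mc{E}} b_n\dd\hat\mu,
\]
and by hypothesis both vanish for every $\hat\mu$. The key input is then the \emph{uniform} sub-additive ergodic theorem (Schreiber, Sturman--Stark, Morris): for a continuous sub-additive cocycle over a continuous self-map of a compact metric space,
\[
\lim_{n\to\infty}\tfrac{1}{n}\sup_{\zeta\in\mc{E}} a_n(\zeta) \;=\; \sup_{\hat\mu\in\M(F)} \lambda^+(F,\hat\mu),
\]
with the analogous statement for $b_n$. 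Combined with the previous display, both limits equal $0$, so for every $\eps>0$ there exists $\ell_0$ such that $a_\ell(\zeta)\leq\eps\ell$ and $b_\ell(\zeta)\leq\eps\ell$ simultaneously for all $\zeta\in\mc{E}$ and all $\ell\geq\ell_0$.

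To close the argument, I would choose $\eps:=\lambda/2$, where $\lambda>0$ is the hyperbolicity constant from Definition~\ref{def:hyperbolic-homeo}, and take $\ell\geq\ell_0$ large enough that moreover $K_0 e^{\lambda\ell/2}\geq 2$. Using the bounds $\nu_u^{(\ell)}(x)\geq K_0 e^{\lambda\ell}$ and $\nu_s^{(\ell)}(x)\leq K_0 e^{-\lambda\ell}$ from Definition~\ref{def:hyperbolic-homeo}, the uniform estimates on $a_\ell$ and $b_\ell$ translate directly into \eqref{eq:u-domination-def} and \eqref{eq:s-domination-def} with $\beta=1$, so that $F$ is $1$-dominated as required. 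The main obstacle is the uniform sub-additive ergodic theorem: the pointwise Kingman theorem only provides $\hat\mu$-a.e.\ and $L^1$ information, and the jump to a \emph{single} iterate $\ell$ that controls \emph{every} fiber derivative uses compactness of $\mc{E}$ and of $\M(F)$ in an essential way; once that tool is invoked, everything else is a bookkeeping exercise with the hyperbolic constants of $f$.
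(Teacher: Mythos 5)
Your proof is correct, but it follows a genuinely different route from the paper. The paper argues by contradiction: assuming $(u,1)$-domination fails, it passes to the projectivized fibered tangent bundle $\bb{P}\to\mc{E}$, where the quantity of interest becomes the \emph{additive} cocycle $\psi([v])=\log\bigl(\norm{\dpf F\cdot v}/\norm{v}\bigr)$ over $[\dpf F]$; it then picks directions $[v_n]$ realizing $\norm{\dpf F^{\ell_n}(\zeta_n)}$, forms the empirical measures $\frac{1}{\ell_n}\sum_{j<\ell_n}[\dpf F^j]_\star\delta_{[v_n]}$, extracts a weak-$\star$ limit $\tilde\eta$, and projects it to an $F$-invariant measure $\eta$ with $\lambda^+(F,\eta)\geq\lambda>0$, contradicting the hypothesis (the $(s,1)$ case is symmetric, using only $\lambda^-$). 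You instead apply the semi-uniform/uniform subadditive ergodic theorem (Schreiber, Sturman--Stark, Morris) directly to the subadditive sequences $\log\norm{\dpf F^n}$ and $\log\norm{(\dpf F^n)^{-1}}$ on the compact space $\mc{E}$, obtaining a single $\ell$ with uniform exponential bounds $e^{\eps\ell}$, and then convert these into \eqref{eq:u-domination-def} and \eqref{eq:s-domination-def} via (h3)--(h4). The two arguments are close in spirit: the paper's projectivization trick is precisely what lets it replace the subadditive quantity by an additive one and run a bare-hands Krylov--Bogolyubov argument, so in effect it reproves the special case of the uniform subadditive theorem it needs, keeping the paper self-contained; your version is shorter and treats the $u$- and $s$-sides symmetrically, at the cost of importing that theorem as a black box (and of needing continuity of $\zeta\mapsto\dpf F(\zeta)$, which the paper also uses, e.g.\ in Lemma~\ref{Lemma-UniformContinuity}). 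One tiny bookkeeping point: with $\eps=\lambda/2$ the two dominations require both $2e^{\eps\ell}\leq K_0e^{\lambda\ell}$ and $e^{(\lambda-\eps)\ell}\geq 2K_0$, i.e.\ conditions on $e^{\lambda\ell/2}$ relative to both $K_0$ and $K_0^{-1}$; both clearly hold for $\ell$ large, but your stated condition $K_0e^{\lambda\ell/2}\geq 2$ literally covers only one of them.
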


\begin{proof}
  Let us show that $F$ is $(u,1)$-dominated.  The $(s,1)$-domination
  follows from completely analogous arguments.  To prove that, we
  shall only use the hypothesis $\lambda^+(F,\hat \mu)=0$, for every
  $\hat\mu\in\M(F)$.

  Then, let us consider the fiber bundle
  $\pi_{\bb{P}}\colon\bb{P}\to\mc{E}$, where the fiber over an
  arbitrary $\zeta\in\mc{E}$ is given by the projectivized tangent
  space of the submanifold $\mc{E}_{\pi(\zeta)}\subset\mc{E}$.

  Now, the derivative-along-fiber operator $\dpf F$ defined by
  \eqref{eq:dpf-definition} naturally induces a bundle map $[\dpf
  F]\colon\bb{P}\righttoleftarrow$ over
  $F\colon\mc{E}\righttoleftarrow$.

  Then we consider the continuous real cocycle $\psi\colon\bb{P}\to\R$
  over $[\dpf F]$ given by
  \begin{displaymath}
    \psi([v]):=\log\frac{\norm{\dpf F\cdot v}_{F(\zeta)}}{\norm{v}_{\zeta}},
    \quad\forall \zeta\in\mc{E},\ \forall v\in
    T\mc{E}_{\pi(\zeta)}\setminus\{0\}, 
  \end{displaymath}
  where $[v]$ denotes the element of $\bb{P}$ induced by $v$.
  
  Now, let $K_0,\lambda$ be the constants and $\nu_u$ be the
  multiplicative cocycle associated to $f$ given by
  Definition~\ref{def:hyperbolic-homeo}, and suppose $F$ is not
  $(u,1)$-dominated. Then, there exists a sequence of points
  $(\zeta_n)_{n\geq 1}$ in $\mc{E}$ and a strictly increasing sequence
  of natural numbers $(\ell_n)_{n\geq 1}$ such that
  \begin{equation}
    \label{eq:F-not-u-dominated-hypo}
    \norm{\dpf F^{\ell_n}(\zeta_n)}\geq
    \frac{\nu_u^{(\ell_n)}\big(\pi(\zeta_n)\big)}{2},
    \quad\forall n\geq 1. 
  \end{equation}

  This implies that for each $n\in\N$ we can find
  $[v_n]\in\bb{P}_{\zeta_n}$ such that
  \begin{equation}
    \label{eq:v-n-choosing}
    \psi^{(\ell_n)}([v_n])=\sum_{j=0}^{\ell_n-1}\psi\Big([\dpf
    F^j][v_n]\Big) = \log\norm{\dpf F^{\ell_n}(\zeta_n)}.
  \end{equation}

  Then, by Banach-Alaoglu theorem, there is no lost of generality
  assuming that there exists $\tilde\eta\in\M(\bb{P})$ such that
  \begin{equation}
    \label{eq:seq-measures-hat-eta}
    \frac{1}{\ell_n}\sum_{j=0}^{\ell_n-1}
    [\dpf F^j]_\star(\delta_{[v_n]})\to\tilde\eta, \quad\text{as }
    n\to\infty, 
  \end{equation}
  where the convergence is in the weak-$\star$ topology.

  Putting together \eqref{eq:F-not-u-dominated-hypo},
  \eqref{eq:v-n-choosing} and \eqref{eq:seq-measures-hat-eta}, we can
  easily show that 
  \begin{equation}
    \label{eq:tilde-eta-estimate}
    \int_{\bb{P}}\psi\dd\tilde\eta=\lim_{n\to\infty}
    \frac{\psi^{(\ell_n)}([v_n])}{\ell_n}\geq
    \lim_{n\to\infty}\frac{1}{\ell_n}\Big(\log K_0 +
    \lambda\ell_n-2\Big)=\lambda. 
  \end{equation}

  Finally, defining $\eta:={\pi_{\bb{P}}}_\star(\tilde\eta)$, we get
  $\eta\in\M(F)$ and from \eqref{eq:tilde-eta-estimate} it easily
  follows
  \begin{displaymath}
    \lambda^+(F,\eta)\geq\lambda>0,
  \end{displaymath}
  contradicting our hypothesis.
\end{proof}

Next we show that $(ii)$ implies $(iii)$ in
Theorem~\ref{thm:lyap-exp-dom-cobound}. 

Since we are assuming $F$ is $1$-dominated, by
Proposition~\ref{prop:ConosVariedadesInestables} we know that we can
lift the stable and unstable sets of $f$ to $\mc{E}$. We shall need
the following properties of them:
\begin{lemma}
  \label{lem:ConosVariedadesInestables}
  If $\mc{W}^\sigma$ denotes the lift of $W^\sigma$ (with
  $\sigma\in\{s,u\}$), then there exists a constant $K \geq 1$ such
  that
  \begin{displaymath}
    d_{\mc{E}}(\zeta,\eta) \leq K
    d\big(\pi(\zeta),\pi(\eta)\big), 
  \end{displaymath}
  for every $\zeta\in\mc{E}$, $\eta\in\mc{W}^\sigma(\zeta)$ and such
  that $\pi(\eta)\in W^\sigma_{\delta_0}(\pi(\zeta),f)$, where
  $\delta_0$ is the constant associated to $f$ by
  Definition~\ref{def:hyperbolic-homeo}.
\end{lemma}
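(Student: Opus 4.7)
The plan is to read the desired inequality off the uniformly Lipschitz section furnished by Proposition~\ref{prop:ConosVariedadesInestables}, essentially translating the abstract Lipschitz section into a local-chart statement. I treat only the stable case; the unstable case follows by applying the stable argument to $f^{-1}$, which is itself a hyperbolic homeomorphism whose (un)stable structure is just the swap of $f$'s.

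First, I reduce to a situation where the trivialization controls everything. Let $L^\star$ denote the Lebesgue number of the finite trivializing cover $\{U_j\}$ of $M$, and set $\delta_1 := \min\{\delta_0, L^\star\}$. Then whenever $\pi(\eta)\in W^s_{\delta_1}(\pi(\zeta),f)$, there is some chart $U_j$ containing both $\pi(\zeta)$ and $\pi(\eta)$, hence both $\zeta$ and $\eta$ lie in $\pi^{-1}(U_j)$. The remaining annulus $W^s_{\delta_0}\setminus W^s_{\delta_1}$ consists of points whose base-distance from $\pi(\zeta)$ is bounded below by $\delta_1$, so there the trivial upper bound $d_{\mc{E}}(\zeta,\eta)\leq L$ already gives $d_{\mc{E}}(\zeta,\eta)\leq (L/\delta_1)\cdot d(\pi(\zeta),\pi(\eta))$; this part will be absorbed into the final constant $K$.

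Next, apply Proposition~\ref{prop:ConosVariedadesInestables}(i) to obtain, for every $\zeta\in\mc{E}$, a section $\rho_\zeta\colon W^s(\pi(\zeta),f)\to\mc{E}$ with image $\mc{W}^s(\zeta)$ and with Lipschitz constant $L_0$ independent of $\zeta$. Reading $\rho_\zeta$ through the local trivialization $\varphi_j$, the restriction $\mc{W}^s(\zeta)\cap\pi^{-1}(U_j)$ becomes the graph of an $L_0$-Lipschitz map from $W^s(\pi(\zeta),f)\cap U_j$ into the fiber $N$. Therefore, for $\pi(\eta)\in W^s_{\delta_1}(\pi(\zeta),f)$, the fiber-coordinate distance between $\zeta$ and $\eta$ is bounded by $L_0\cdot d(\pi(\zeta),\pi(\eta))$, and inserting this into the definition \eqref{eq:d_E-definition} yields
$$d_{\mc{E}}(\zeta,\eta)\;\leq\;d(\pi(\zeta),\pi(\eta)) + L_0\cdot d(\pi(\zeta),\pi(\eta))\;=\;(1+L_0)\cdot d(\pi(\zeta),\pi(\eta)).$$
Combining both regimes, the lemma follows with $K := \max\{1+L_0,\,L/\delta_1\}$.

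The only subtle point — and hence the main obstacle, though a minor one — is a matter of convention: one must verify that the ``uniform Lipschitz constant'' of $\rho_\zeta$ asserted by Proposition~\ref{prop:ConosVariedadesInestables} refers to the ambient distance $d$ on the base and the distance $d_{\mc{E}}$ on $\mc{E}$, as opposed to the possibly larger intrinsic distance on the (global) stable leaf. This is indeed the case because the section is produced by the graph transform, which operates chart-by-chart and delivers a Lipschitz graph with a uniform constant exactly on local stable pieces; the role of the hypothesis $\pi(\eta)\in W^s_{\delta_0}(\pi(\zeta),f)$ in the lemma is precisely to keep the argument inside that local regime.
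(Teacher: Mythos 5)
Your argument is, at bottom, the paper's own: the lemma is deduced as a direct consequence of Proposition~\ref{prop:ConosVariedadesInestables}, namely that the leaves of $\mc{W}^\sigma$ are graphs of sections over the stable/unstable sets with a uniform Lipschitz constant; your extra chart bookkeeping and the final assembly via \eqref{eq:d_E-definition} are just a more explicit rendering of that one-line deduction, and your closing remark about which metrics the uniform Lipschitz constant refers to is exactly the point the paper's proof implicitly relies on.

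One sub-claim in your regime split is wrong as stated: a point of $W^s_{\delta_0}(\pi(\zeta))\setminus W^s_{\delta_1}(\pi(\zeta))$ need \emph{not} be at base distance at least $\delta_1$ from $\pi(\zeta)$ at time zero, since membership in $W^s_{\delta_1}$ constrains \emph{all} forward iterates and the defect may only appear at some later iterate (recall $\nu_s$ in (h1) need not be $<1$ at every step). The error is harmless because the split you actually need is on the current base distance, not on membership in a smaller local stable set: if $d\big(\pi(\zeta),\pi(\eta)\big)<L$ (the Lebesgue number), both points lie over a common trivializing chart and your graph argument applies, while if $d\big(\pi(\zeta),\pi(\eta)\big)\geq L$ the cap in \eqref{eq:d_E-definition} gives $d_{\mc{E}}(\zeta,\eta)\leq L\leq d\big(\pi(\zeta),\pi(\eta)\big)$ outright. (Alternatively, your original split can be repaired: by (h1) and (h3), a point of $W^s_{\delta_0}\setminus W^s_{\delta_1}$ satisfies $d\big(\pi(\zeta),\pi(\eta)\big)>\delta_1/K_0$, which only changes the constant $K$.)
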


\begin{proof}
  This is a straightforward consequence of
  Proposition~\ref{prop:ConosVariedadesInestables}, \ie the fact that
  the elements of $\mc{W}^\sigma$ are graphs of Lipschitz functions
  with uniformly bounded constant over the stable and unstable sets of
  $f$.
\end{proof}

Then we need the following result that plays a key role in the
construction of solutions for the cohomological equation:
\begin{proposition}
  \label{pro:PartialHypImplicaGraficos}   
  If $F$ is $1$-dominated and satisfies \eqref{eq:POO-bund-map}
  condition, then the closure of every $F$-orbit is the image of a
  Lipschitz section.  More precisely, for every $\zeta\in\mc{E}$,
  there exists a Lipschitz section
  $V_\zeta\colon\overline{\mc{O}_f\big(\pi(\zeta)\big)}\subset M\to
  \mc{E}$ such that
  \begin{displaymath}
    \overline{\mc{O}_F(\zeta)}=\left\{ V_\zeta(y)\in\mc{E} :
      y\in\overline{\mc{O}_f\big(\pi(\zeta)\big)}\right\}.  
  \end{displaymath}
\end{proposition}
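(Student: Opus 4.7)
The plan is to define a candidate for $V_\zeta$ on the orbit itself via
\[
  V_\zeta(f^n(x)) := F^n(\zeta),\qquad x:=\pi(\zeta),\ n\in\Z,
\]
establish a uniform Lipschitz estimate
\[
  d_{\mc{E}}\bigl(F^m(\zeta),F^n(\zeta)\bigr)\le C\,d\bigl(f^m(x),f^n(x)\bigr)\qquad\forall m,n\in\Z,
\]
and then extend by uniform continuity to $\overline{\mc{O}_f(x)}$. The assignment is well defined: if $f^m(x)=f^n(x)$ with $m<n$, then $x$ is periodic with period dividing $n-m$, and \eqref{eq:POO-bund-map} forces $F^{n-m}(\zeta)=\zeta$.

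For the Lipschitz estimate, fix $m<n$, set $k:=n-m$, $y:=f^m(x)$, $D:=d(f^m(x),f^n(x))$, and first suppose $D<\delta_1'$ for a threshold $\delta_1'\le\delta_1$ to be fixed below; the case $D\ge\delta_1'$ is trivial since $d_{\mc{E}}$ is bounded by the constant $L$ of \eqref{eq:d_E-definition}. Theorem~\ref{thm:AnosovClosing} produces $p\in\Fix(f^k)$ and $w=[y,p]\in W^u_{\eps_0}(y)\cap W^s_{\eps_0}(p)$ with $d(y,w)\le cDe^{-\lambda k}$ and $d(w,p)\le cD$. Since $w\in W^s_{\eps_0}(p)$, the hypotheses (h1) and (h3) give $d(p,f^k(w))=d(f^k(p),f^k(w))\le K_0e^{-\lambda k}d(w,p)$, and a triangle inequality then yields $d(f^k(w),f^k(y))\le C_1 D$ for some absolute $C_1$. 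Because $F$ is $1$-dominated, Proposition~\ref{prop:ConosVariedadesInestables} provides the lifted laminations $\mc{W}^{s},\mc{W}^{u}$, which allow us to set
\[
  \zeta':=\mc{W}^u(F^m(\zeta))\cap\mc{E}_w,\qquad \xi:=\mc{W}^s(\zeta')\cap\mc{E}_p.
\]
The $F$-invariance of the laminations places $F^k(\zeta')$ in $\mc{W}^u(F^n(\zeta))\cap\mc{W}^s(F^k(\xi))$, while \eqref{eq:POO-bund-map} forces $F^k(\xi)=\xi$.

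Choosing $\delta_1'$ small enough (in terms of $c,C_1,\eps_0,\delta_0$) that each of the four base distances listed above is below $\delta_0$, Lemma~\ref{lem:ConosVariedadesInestables} applies to each leg of the chain
\[
  F^m(\zeta)\xrightarrow{\mc{W}^u}\zeta'\xrightarrow{\mc{W}^s}\xi=F^k(\xi)\xrightarrow{\mc{W}^s}F^k(\zeta')\xrightarrow{\mc{W}^u}F^n(\zeta)
\]
and bounds the fiber-distance on that leg by $K$ times the base distance, namely by $Kd(y,w)$, $Kd(w,p)$, $Kd(p,f^k(w))$ and $Kd(f^k(w),f^k(y))$, respectively. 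Summing the four estimates gives $d_{\mc{E}}(F^m(\zeta),F^n(\zeta))\le CD$ with $C$ independent of $m,n$.

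The Lipschitz map $V_\zeta$ therefore extends uniquely to a Lipschitz map $V_\zeta\colon\overline{\mc{O}_f(x)}\to\mc{E}$, which satisfies $\pi\circ V_\zeta=\mathrm{id}$ everywhere by density. Finally, $V_\zeta\bigl(\overline{\mc{O}_f(x)}\bigr)=\overline{\mc{O}_F(\zeta)}$: the left side is closed (continuous image of a compact) and contains $\mc{O}_F(\zeta)$, hence contains its closure; conversely, any $\lim F^{n_j}(\zeta)$ equals $V_\zeta(\lim f^{n_j}(x))$ by continuity. I expect the main technical obstacle to be the bookkeeping in the previous paragraph: certifying uniformly in $m,n,k$ that each of the four auxiliary points lies within the local piece of the appropriate lamination to which Lemma~\ref{lem:ConosVariedadesInestables} directly applies, which requires carefully comparing the Anosov-closing bounds against the local-product-structure constants $\eps_0$ and $\delta_0$.
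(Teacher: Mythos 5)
Your proposal is correct and follows essentially the same route as the paper: the paper reduces the statement to a uniform ``near-return'' claim and proves it with exactly your chain, using the Anosov closing lemma, the lifted laminations $\mc{W}^{s},\mc{W}^{u}$ from $1$-domination, the Lipschitz bound of Lemma~\ref{lem:ConosVariedadesInestables}, and \eqref{eq:POO-bund-map} to fix the lifted point over the shadowing periodic orbit (your $\zeta',\xi$ are the paper's $\zeta_y,\zeta_p$, and your four-leg estimate is the paper's two estimates joined at $\zeta_p=F^n(\zeta_p)$). The bookkeeping you flag about staying inside the local product structure is handled in the paper by simply taking $\delta\leq\min(\delta_0,\delta_1,\varepsilon(4cK)^{-1})$, so it poses no real obstacle.
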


\begin{proof}
  In order to show that the closure of any $F$-orbit coincides with
  the image of a continuous section of the fiber bundle
  $N\to\mc{E}\xrightarrow{\pi} M$, it is enough to show the following
  \begin{claim}
    \label{claim:return}
    For every $\zeta\in\mc{E}$ and every $\varepsilon>0$, there exists
    $\delta>0$ such that
    \begin{displaymath}
      d_{\mc{E}}\big(\zeta,F^{n}(\zeta)\big)<\varepsilon,
    \end{displaymath}
    whenever $d\big(\pi(\zeta),f^n(\pi(\zeta))\big)<\delta$.
  \end{claim}

  To prove Claim~\ref{claim:return}, let $\zeta\in\mc{E}$ and
  $\varepsilon>0$ be arbitrary. Then, let us choose
  $\delta:=\min\big(\delta_0,\delta_1,\varepsilon(4cK)^{-1}\big)$,
  where constants $\delta_1$ and $c$ are given by
  Theorem~\ref{thm:AnosovClosing} and $K$ is given by
  Lemma~\ref{lem:ConosVariedadesInestables}.

  Then, suppose $n\in\N$ is given such that
  $d\big(\pi(\zeta),f^n(\pi(\zeta))\big)<\delta$. Since $f$ is a
  hyperbolic homeomorphism and $\delta\leq\delta_1$, we can apply
  Theorem~\ref{thm:AnosovClosing} to guaranty the existence of
  $p\in\Per(f)$ and $y:=\big[\pi(\zeta),p]\in M$ satisfying (1), (2)
  and (3) in Theorem~\ref{thm:AnosovClosing}. Thus, taking into
  account that the fiber bundle projection $\pi$ is one-to-one on
  $\mc{W}^u(\zeta)\subset\mc{E}$ and $y\in
  W^u(\pi(\zeta),f)=\pi\big(\mc{W}^u(\zeta)\big)$, there exists a
  unique point $\zeta_y\in\mc{E}_y\cap\mc{W}^u(\zeta)$. Analogously,
  $\pi$ is one-to-one from $\mc{W}^s(\zeta_y)$ onto $W^s(y)$ and
  hence, there exists a unique point
  $\zeta_p\in\mc{E}_p\cap\mc{W}^s(\zeta_y)$.

  Now, observing that $\zeta_p\in\mc{W}^s(\zeta_y)$ and
  $\zeta_y\in\mc{W}^u(\zeta)$, we can combine
  Theorem~\ref{thm:AnosovClosing} and
  Lemma~\ref{lem:ConosVariedadesInestables} to guaranty that
  \begin{equation}
    \label{eq:zeta-init-zeta-p}
    d_{\mc{E}}(\zeta,\zeta_p)\leq
    d_{\mc{E}}(\zeta,\zeta_y)+d_{\mc{E}}(\zeta_y,\zeta_p)\leq
    K\Big[d\big(\pi(\zeta),y\big) + d(y,p)\Big]<
    2Kc\delta\leq\frac{\varepsilon}{2},
  \end{equation}
  and
  \begin{equation}
    \label{eq:zeta-returning-zeta-p}
    \begin{split}
      d_{\mc{E}}\big(F^n(\zeta),F^n(\zeta_p)\big)&\leq
      d_{\mc{E}}\big(F^n(\zeta),F^n(\zeta_y)\big) +
      d_{\mc{E}}\big(F^n(\zeta_y),F^n(\zeta_p)\big) \\
      & \leq K\Big[d\big(f^n(\pi(\zeta),f^n(y)\big) +
      d\big(f^n(y),f^n(p)\big)\Big] \\
      &\leq 2Kcd\big(f^n(\pi(\zeta)),\pi(\zeta) \big) < 2Kc\delta\leq
      \frac{\varepsilon}{2},
    \end{split}
  \end{equation}

  Finally observe that, since $f^n(p)=p$ and $F$ satisfies
  \eqref{eq:POO-bund-map}, it holds $F^n(\zeta_p)=\zeta_p$. Then,
  putting together \eqref{eq:zeta-init-zeta-p} and
  \eqref{eq:zeta-returning-zeta-p}, we get
  $d_{\mc{E}}(\zeta,F^n(\zeta))<\varepsilon$, and our claim is proven.
\end{proof}

It is interesting to notice that the exponential shadowing given by
Anosov closing lemma (Theorem~\ref{thm:AnosovClosing}) was not used in
the proof of Proposition~\ref{pro:PartialHypImplicaGraficos}. In fact,
the classical Shadowing Lemma is enough because the H\"older
regularity (in this case Lipschitz) was already used in
Lemma~\ref{lem:ConosVariedadesInestables}.

Now, let us fix any point $x_0\in M$ such that its forward and
backward $f$-orbits are dense. Then, by
Proposition~\ref{pro:PartialHypImplicaGraficos}, assuming $F$ is
$1$-dominated, for every $\zeta\in\mc{E}_{x_0}$ there exists a
continuous section $V_\zeta\colon M\to\mc{E}$ such that
$\overline{\mc{O}_F(\zeta)}$ coincides with the image of $V_\zeta$. To
simplify the notation, the image of the section $V_\zeta$ will be
denoted by $\mc{V}_\zeta$, \ie we define $\mc{V}_\zeta:=\{V_\zeta(x)
\in\mc{E} : x\in M\}$, for every $\zeta\in\mc{E}_{x_0}$.

Then we will show that the family
$\left\{\mc{V}_\zeta\right\}_{\zeta\in\mc{E}_{x_0}}$ determines a
continuous lamination in $\mc{E}$. To do this, we first prove the
following

\begin{proposition}
  \label{pro:su-saturation-V-zeta}
  Assuming $F$ is $1$-dominated, for each $\zeta\in\mc{E}_{x_0}$ the
  image of the section $V_\zeta$ defined above is saturated by leaves
  of the lamination $\mc{W}^s$ \emph{($\mc{W}^u$, respectively.)} More
  precisely, for every $\zeta\in\mc{E}_{x_0}$ and any
  $\eta\in\mc{V}_\zeta$,
  \begin{displaymath}
    \mc{W}^\sigma(\eta)\subset\mc{V}_\zeta,\quad\text{for }
    \sigma\in\{s,u\}.
  \end{displaymath}
\end{proposition}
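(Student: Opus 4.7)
The plan is to prove the statement for $\mc{W}^s$; the $\mc{W}^u$-case follows by the same argument applied to $F^{-1}$, with $(u,1)$-domination replacing $(s,1)$-domination. The overall strategy is to pit the section $V_\zeta$ against the stable lamination and force their local pieces to coincide.

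First I reduce to a local statement. The set $\mc{V}_\zeta$ is $F^{\pm 1}$-invariant: it is the image of the section $V_\zeta$ over all of $M=\overline{\mc{O}_f(\pi(\zeta))}$, so $\mc{V}_\zeta$ and $F(\mc{V}_\zeta)$ both have exactly one lift per fibre and $F(\mc{V}_\zeta)\subseteq\mc{V}_\zeta$ forces equality. Combined with the $F$-invariance of $\mc{W}^s$ from Proposition~\ref{prop:ConosVariedadesInestables}(ii), it suffices to show $\mc{W}^s_{\epsilon_0}(\eta)\subset\mc{V}_\zeta$ for every $\eta\in\mc{V}_\zeta$; any $\eta'\in\mc{W}^s(\eta)$ eventually satisfies $F^n(\eta')\in\mc{W}^s_{\epsilon_0}(F^n(\eta))\subset\mc{V}_\zeta$, hence $\eta'\in\mc{V}_\zeta$.

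Fix $\eta\in\mc{V}_\zeta$ and $\eta'\in\mc{W}^s_{\epsilon_0}(\eta)$. Set $y:=\pi(\eta')$ and $\tilde\eta:=V_\zeta(y)\in\mc{V}_\zeta\cap\mc{E}_y$; the goal becomes $\tilde\eta=\eta'$. For the upper bound on forward iterates, using $F^n(\tilde\eta)=V_\zeta(f^n(y))$ together with the Lipschitz continuity of $V_\zeta$ supplied by Proposition~\ref{pro:PartialHypImplicaGraficos} and Lemma~\ref{lem:ConosVariedadesInestables} applied to $\eta,\eta'$, a triangle inequality yields
\begin{displaymath}
  d_\mc{E}\bigl(F^n(\tilde\eta),F^n(\eta')\bigr)\leq C_1\,\nu_s^{(n)}(\pi(\eta))\,d(y,\pi(\eta)).
\end{displaymath}
For the matching lower bound, iterating the $(s,1)$-domination estimate $k$ times at scale $\ell$ multiplies conorms to give $\norm{\dpf F^{k\ell}(\xi)^{-1}}^{-1}\geq 2^k\,\nu_s^{(k\ell)}(\pi(\xi))$ for all $\xi\in\mc{E}$. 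Pulling back any curve in the fibre $\mc{E}_{f^{k\ell}(y)}$ through $F^{-k\ell}$ (a diffeomorphism of the fibre whose Lipschitz constant is the reciprocal of that conorm) produces
\begin{displaymath}
  d_\mc{E}\bigl(F^{k\ell}(\tilde\eta),F^{k\ell}(\eta')\bigr)\geq 2^k\,\nu_s^{(k\ell)}(y)\,d_\mc{E}(\tilde\eta,\eta').
\end{displaymath}
Taking $n=k\ell$ and exploiting that $\nu_s^{(k\ell)}(y)$ and $\nu_s^{(k\ell)}(\pi(\eta))$ are comparable by a uniform constant (their base orbits remain exponentially close along the local stable set, and $\nu_s$ is continuous on compact $M$), the stable factors cancel and one is left with $d_\mc{E}(\tilde\eta,\eta')\leq C_2\cdot 2^{-k}\,d(y,\pi(\eta))$, which forces $\tilde\eta=\eta'$ upon letting $k\to\infty$.

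The main obstacle will be making the lower-bound step rigorous: upgrading the infinitesimal conorm estimate supplied by $(s,1)$-domination to a finite-distance fibre estimate requires the curve-pullback argument together with the uniformity of the bound across $\mc{E}$, and care must be taken to control $\nu_s^{(k\ell)}$ at two different base points in the same local stable set. The decisive quantitative feature is the factor $2$ appearing in \eqref{eq:s-domination-def}: even though $(s,1)$-domination is borderline in the sense that the fibre contraction rate exactly matches the base stable rate $\nu_s^{(k\ell)}$, the factor $2$ compounds to $2^k$ under $k$ iterations and defeats this borderline decay.
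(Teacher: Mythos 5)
Your proof is correct in substance, but it takes a genuinely different route from the paper's. The paper argues softly, by contradiction and with no rates: if some leaf $\mc{W}^s(\eta')$ escaped $\mc{V}_\zeta$, one chooses (by continuity) a nearby $\eta\in\mc{V}_\zeta$ whose base forward orbit is dense and a point $\xi\in\mc{W}^s(\eta)\setminus\mc{V}_\zeta$; since $F^n(\xi)$ shadows $F^n(\eta)$ and $\overline{\mc{O}^+_F(\eta)}=\mc{V}_\zeta$, the orbit closure of $\xi$ meets the fibre $\mc{E}_{\pi(\xi)}$ in the two points $\xi$ and $V_\zeta(\pi(\xi))$, contradicting Proposition~\ref{pro:PartialHypImplicaGraficos} applied to $\xi$ --- so the paper re-uses the periodic orbit obstructions (through that proposition) a second time. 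You instead pit the decay rate $\nu_s^{(n)}$ of the upper bound (Lipschitz graph of $V_\zeta$ plus Lemma~\ref{lem:ConosVariedadesInestables}) against the compounded conorm $2^k\nu_s^{(k\ell)}$ coming from $(s,1)$-domination; this is closer in spirit to the uniqueness part of the graph-transform Proposition~\ref{prop:ConosVariedadesInestables}, it isolates exactly where POO enters (only in producing the sections $V_\zeta$), and it shows saturation is a consequence of domination plus the Lipschitz-graph property alone --- at the price of the quantitative bookkeeping you flag. Two caveats on that bookkeeping. First, the ``uniform comparability'' of $\nu_s^{(k\ell)}(y)$ and $\nu_s^{(k\ell)}(\pi(\eta))$ is not granted by Definition~\ref{def:hyperbolic-homeo}, which imposes no regularity on $\nu_s$; but the issue is avoidable, since $y\in W^s_{\eps_0}(x)$ if and only if $x\in W^s_{\eps_0}(y)$, so applying (h1) with centre $y$ gives $d(f^n(y),f^n(\pi(\eta)))\leq\nu_s^{(n)}(y)\,d(y,\pi(\eta))$ and all your upper bounds can be phrased with $\nu_s^{(n)}(y)$, making the cancellation exact. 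Second, your upper bound genuinely needs $V_\zeta$ to be Lipschitz (continuity gives no rate, and then the comparison with $2^k\nu_s^{(k\ell)}$ proves nothing); in the paper this regularity is formally established only in Lemma~\ref{lem:V-zeta-holder-reg}, whose proof uses the saturation you are proving, so to stay non-circular you should point out that the quantitative estimate in the proof of Proposition~\ref{pro:PartialHypImplicaGraficos} (the bound by $4Kc$ times the base distance at return times) already makes $V_\zeta$ Lipschitz on the dense orbit and hence on its closure.
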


\begin{proof}
  Let us suppose the proposition is not true. Then, there exists some
  $\zeta\in\mc{E}_{x_0}$ and $\eta^\prime\in\mc{V}_\zeta$ such that
  $\mc{W}^s(\eta^\prime)\not\subset\mc{V}_\zeta$
  \emph{($\mc{W}^u(\eta^\prime)\not\subset\mc{V}_\zeta$,
    respectively).} By continuity of the section $V_\zeta$ and the
  stable \emph{(unstable, respec.)} lamination, we can choose a point
  $\eta\in\mc{V}_\zeta$ such that the forward \emph{(backward,
    respec.)} $f$-orbit of $\pi(\eta)$ is dense in $M$ and
  $\mc{W}^s(\eta)\not\subset\mc{V}_\zeta$
  \emph{($\mc{W}^u(\eta)\not\subset\mc{V}_\zeta$, respec.)} Then, we
  take a point $\xi\in\mc{W}^s(\eta)\setminus\mc{V}_\zeta$
  \emph{($\xi\in\mc{W}^u(\eta)\setminus\mc{V}_\zeta$, respec.).}
  Observe that $\mc{O}_f^+\big(\pi(\xi)\big)$ \emph{(
    $\mc{O}_f^-\big(\pi(\xi)\big)$, respec.)} is dense in $M$. Hence,
  the section $V_\xi$ given by
  Proposition~\ref{pro:PartialHypImplicaGraficos} is defined on the
  whole space $M$. But, since $\xi\in\mc{W}^s(\eta)$
  \emph{($\xi\in\mc{W}^u(\eta)$, respec.)}, the set
  $\overline{\mc{O}_F(\xi)}$ intersects the fiber $\mc{E}_{\pi(\xi)}$
  at two different points: at $\xi$ and at
  $V_\zeta\big(\pi(\xi)\big)$, contradicting
  Proposition~\ref{pro:PartialHypImplicaGraficos}.
\end{proof}

\begin{remark}
  It is interesting to notice that a less elementary proof of
  Proposition~\ref{pro:su-saturation-V-zeta} can be easily gotten
  invoking the topological version of the Invariance Principle of
  Avila and Viana (see Theorem \ref{thm:inv-principle}). In fact,
  assuming domination and \eqref{eq:POO-bund-map}, using
  Theorem~\ref{thm:Kalinin1.4}, it can be shown that condition $(i)$
  of Theorem~\ref{thm:lyap-exp-dom-cobound} holds and then the
  Invariance Principle can be applied.
\end{remark}

As a consequence of Proposition~\ref{pro:su-saturation-V-zeta} we get
the family $\left\{\mc{V}_\zeta\right\}_{\zeta\in\mc{E}_{x_0}}$ is a
partition of the total space $\mc{E}$, and moreover, a continuous
lamination whose leaves are (topologically) transverse to the fibers
of the fiber bundle $N\to\mc{E}\xrightarrow{\pi}M$. Thus, we can
define the \emph{holonomy maps} of this lamination as follows: given
arbitrary points $x,y\in M$, we define the \emph{holonomy map from $x$
  to $y$} is defined by
\begin{equation}
  \label{eq:Hol-definition}
  \Hol_{x,y} : \mc{E}_x\ni \zeta \mapsto V_{\hat\zeta}(y)\in\mc{E}_y,
\end{equation}
where $\hat\zeta$ is the unique point in $\mc{E}_{x_0}$ such that
$\zeta\in\mc{V}_{\hat\zeta}$. Observe that, by
Proposition~\ref{pro:su-saturation-V-zeta}, the holonomy maps are (at
least) homeomorphisms. After some additional results, we shall show
they are indeed $C^1$-diffeomorphisms.

Then we get the following

\begin{proposition}
  \label{pro:H-homeo-definition}
  The fiber bundle map $F\colon\mc{E}\carr$ is a
  $C^{\Lip,0}$-coboundary. More precisely, the fiber bundle
  $N\to\mc{E}\xrightarrow{\pi}M$ admits a continuous trivialization
  $H\colon\mc{E}\to M\times N$ that turns the
  diagram~\eqref{eq:diag-cobound-trivial} commutative.
\end{proposition}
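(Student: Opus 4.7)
The plan is to construct $H$ explicitly from the lamination $\{\mathcal{V}_\zeta\}_{\zeta\in\mathcal{E}_{x_0}}$ that has just been built. Since each leaf $\mathcal{V}_\zeta$ (for $\zeta\in\mathcal{E}_{x_0}$) is the image of a continuous section $V_\zeta\colon M\to\mathcal{E}$ and the family of leaves is a continuous partition of $\mathcal{E}$ transverse to the fibers, every $\eta\in\mathcal{E}$ is labeled by a unique point $\hat\eta\in\mathcal{E}_{x_0}$, namely $\hat\eta=\mathrm{Hol}_{\pi(\eta),x_0}(\eta)$, the single intersection of its leaf with the reference fiber. Fix once and for all a homeomorphism $\psi\colon\mathcal{E}_{x_0}\to N$ coming from a local trivialization of the fiber bundle around $x_0$, and set
\begin{displaymath}
H(\eta) := \bigl(\pi(\eta),\,\psi(\mathrm{Hol}_{\pi(\eta),x_0}(\eta))\bigr),\qquad\eta\in\mathcal{E}.
\end{displaymath}

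I would then verify the three required properties. First, $H$ is a bijective homeomorphism: on each fiber $\mathcal{E}_x$, the holonomy $\mathrm{Hol}_{x,x_0}\colon\mathcal{E}_x\to\mathcal{E}_{x_0}$ is a homeomorphism (as established just before the statement), and composing with $\psi$ gives a homeomorphism onto $\{x\}\times N$; globally, continuity of $H$ follows from the continuity of the lamination, and its inverse $(x,y)\mapsto V_{\psi^{-1}(y)}(x)$ is continuous for the same reason. Second, $H$ lies over the identity on $M$ by construction, so the lower triangle of the diagram commutes trivially.

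For the top square---the conjugacy $H\circ F=(f\times\mathrm{id}_N)\circ H$---the key point is that each leaf $\mathcal{V}_\zeta$ is $F$\nobreakdash-invariant. Indeed, $\mathcal{V}_\zeta=\overline{\mathcal{O}_F(\zeta)}$ by construction and $F$ is a homeomorphism, so $F(\mathcal{V}_\zeta)=\overline{F(\mathcal{O}_F(\zeta))}=\overline{\mathcal{O}_F(\zeta)}=\mathcal{V}_\zeta$. Hence for every $\eta\in\mathcal{V}_{\hat\eta}$ we also have $F(\eta)\in\mathcal{V}_{\hat\eta}$, so the label $\hat\eta$ is preserved under $F$, which gives
\begin{displaymath}
H(F(\eta))=\bigl(f(\pi(\eta)),\psi(\hat\eta)\bigr)=(f\times\mathrm{id}_N)(H(\eta)).
\end{displaymath}

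There is no real obstacle in this proposition: the heavy lifting has already been done in establishing Proposition~\ref{pro:PartialHypImplicaGraficos} and Proposition~\ref{pro:su-saturation-V-zeta}, which produced a continuous $F$\nobreakdash-invariant lamination transverse to the fibers with single-point intersections. The only subtlety worth flagging is that $F$\nobreakdash-invariance of leaves must really be checked at the level of closures of full orbits (not merely forward semi-orbits), but since $F$ is a bundle homeomorphism this is immediate. The remaining assertion of Theorem~\ref{thm:lyap-exp-dom-cobound}, namely promoting this continuous trivialization to a $C^{\alpha,1}$ one, will require the smoothness of the holonomies and is the genuinely harder step reserved for what comes afterwards.
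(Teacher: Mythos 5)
Your proposal is correct and is essentially the paper's own proof: the paper defines $H(\zeta)=\big(\pi(\zeta),\mathrm{pr}_2(\phi_j(\Hol_{\pi(\zeta),x_0}(\zeta)))\big)$ with a fixed trivializing chart around $x_0$ (your $\psi$), and derives the conjugacy with $f\times id_N$ from the $F$-invariance of the leaves $\mc{V}_{\hat\zeta}$ exactly as you do. Your extra remarks on continuity of $H^{-1}$ and on invariance of closures of full orbits only spell out details the paper leaves implicit.
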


\begin{proof}
  To show that the fiber bundle is trivial, let us consider the map
  $H\colon\mc{E}\to M\times N$ given by
  \begin{equation}
    \label{eq:H-definition}
    H(\zeta):=\bigg(\pi(\zeta),
    \mathrm{pr_2}
    \Big(\phi_j\big(\Hol_{\pi(\zeta),x_0}(\zeta)\big)\Big)\bigg), 
    \quad\forall\zeta\in\mc{E},
  \end{equation}
  where $\phi_j\colon U_j\to M\times N$ is any (but fixed)
  trivializing chart of the fiber bundle $N\to\mc{E}\xrightarrow{\pi}
  M$ such that $x_0\in U_j$. Then, since holonomy maps are
  homeomorphisms, it is clear that $H$ itself is a homeomorphism, and
  since $F\big(\mc{V}_{\hat \zeta}\big)=\mc{V}_{\hat \zeta}$, for
  every $\hat\zeta\in\mc{E}_{x_0}$, we conclude that
  \begin{displaymath}
    H\big(F(\zeta)\big) = (f\times id_N)\big(H(\zeta)\big), \quad\forall
    \zeta\in\mc{E},
  \end{displaymath}
  as desired.
\end{proof}

Finally, in order to show that $F$ is a $C^{\Lip,1}$-coboundary, it
remains to prove that the map $H\colon\mc{E}\to M\times N$ constructed
in the proof of Proposition~\ref{pro:H-homeo-definition} is indeed a
$C^{\Lip,1}$-bundle map.

To do this, it is necessary to show that the holonomy maps defined in
\eqref{eq:Hol-definition} are differentiable and this will be gotten
invoking Proposition~\ref{pro:Kalinin-VariacionContinua}. To use this
result, we first need the following
\begin{lemma}
  \label{lem:V-zeta-holder-reg}
  For every $\zeta\in\mc{E}_{x_0}$, the section $V_{\zeta}\colon
  M\to\mc{E}$ (whose image is $\mc{V}_{\zeta}$) is Lipschitz.
\end{lemma}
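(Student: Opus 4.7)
The plan is to exploit the fact (just established in Proposition~\ref{pro:su-saturation-V-zeta}) that $\mc{V}_\zeta$ is saturated by both the stable lamination $\mc{W}^s$ and the unstable lamination $\mc{W}^u$, combined with the local product structure of the hyperbolic base dynamics, to reduce the Lipschitz estimate on $V_\zeta$ to two applications of Lemma~\ref{lem:ConosVariedadesInestables}.

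More precisely, fix $\zeta\in\mc{E}_{x_0}$ and consider two points $x,y\in M$ with $d(x,y)<\delta_0$, where $\delta_0$ is the local product constant of $f$. Let $z:=[x,y]\in W^u_{\eps_0}(x)\cap W^s_{\eps_0}(y)$. The first thing I would check is the standard estimate for hyperbolic homeomorphisms
\begin{displaymath}
  d(x,z)+d(y,z)\leq C_0\,d(x,y),
\end{displaymath}
which holds with some constant $C_0$ depending only on $f$ (after the reduction of Remark~\ref{rem:d-to-d-alpha} to the Lipschitz metric $d_\alpha$; note that even if the bracket is only continuous in the ambient $d$, once we pass to $d_\alpha$ this becomes a bona fide Lipschitz bound from the Hölder continuity of $[\cdot,\cdot]$).

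Next, since $z\in W^u_{\eps_0}(x)$ and $\mc{V}_\zeta$ is $\mc{W}^u$-saturated, the point $V_\zeta(z)$ is the unique point of $\mc{V}_\zeta\cap\mc{E}_z$ and lies in $\mc{W}^u\big(V_\zeta(x)\big)$. Similarly, $V_\zeta(y)\in\mc{W}^s\big(V_\zeta(z)\big)$. Lemma~\ref{lem:ConosVariedadesInestables} then yields
\begin{displaymath}
  d_{\mc{E}}\big(V_\zeta(x),V_\zeta(z)\big)\leq K\,d(x,z),\qquad
  d_{\mc{E}}\big(V_\zeta(z),V_\zeta(y)\big)\leq K\,d(z,y),
\end{displaymath}
and the triangle inequality combined with the product-structure estimate gives
\begin{displaymath}
  d_{\mc{E}}\big(V_\zeta(x),V_\zeta(y)\big)\leq K(d(x,z)+d(z,y))\leq K C_0\,d(x,y).
\end{displaymath}
This is the desired local Lipschitz control; compactness of $M$ and continuity of $V_\zeta$ upgrade it to a global Lipschitz bound with a constant independent of $\zeta\in\mc{E}_{x_0}$.

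The only non-routine ingredient is the Lipschitz dependence of the bracket $[\cdot,\cdot]$ on its arguments in the working metric. This is what I expect to be the main obstacle to write cleanly; the remaining steps are purely formal consequences of the biholonomy saturation from Proposition~\ref{pro:su-saturation-V-zeta} and the uniform Lipschitz graph property of the leaves of $\mc{W}^s$ and $\mc{W}^u$ from Proposition~\ref{prop:ConosVariedadesInestables}.
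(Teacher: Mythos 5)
Your argument is exactly the paper's proof, which is stated in one sentence: saturation of $\mc{V}_\zeta$ by $\mc{W}^s$ and $\mc{W}^u$ (Proposition~\ref{pro:su-saturation-V-zeta}), the uniform Lipschitz graph property of the leaves (Lemma~\ref{lem:ConosVariedadesInestables}), and the local product structure (h5), combined via the bracket point $z=[x,y]$ and the triangle inequality. The one point you flag --- a quantitative bound $d(x,[x,y])+d(y,[x,y])\leq C_0\,d(x,y)$ rather than mere continuity of the bracket --- is likewise left implicit in the paper's appeal to (h5), so your write-up is essentially a spelled-out version of the same argument.
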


\begin{proof}
  It is a straightforward consequence of the fact that the graph of
  $V_\zeta$ is saturated by $\mc{W}^s$ and $\mc{W}^u$, which are
  Lischitz themselves and have local product structure (see (h5) in
  Definition~\ref{def:hyperbolic-homeo}).
\end{proof}

Now, for every $\zeta\in\mc{E}_{x_0}$, consider the set 
\begin{displaymath}
  \Xi^{\zeta}:=\bigsqcup_{x\in M} T_{V_\zeta(x)}\mc{E}_{x},
\end{displaymath}
where $\bigsqcup$ denotes the disjoint-union operator, and the
``natural projection'' map $\pi^\zeta\colon\Xi^\zeta\to M$ given by
$(\pi^\zeta)^{-1}(x)=T_{V_\zeta(x)}\mc{E}_{x}$, for every $x\in M$. By
Lemma~\ref{lem:V-zeta-holder-reg}, the set $\Xi^\zeta$ can naturally
be endowed with an appropriate vector bundle structure turning
$\R^d\to\Xi^\zeta\xrightarrow{\pi^\zeta}M$ into a $C^{\Lip}$-vector
bundle, where $d=\dim N$.

On the other hand, since every leaf $\mc{V}_\zeta$ is $F$-invariant
and $F\big|_{\mc{E}_{x}}\colon\mc{E}_{x}\to\mc{E}_{f(x)}$ is a
$C^1$-diffeomorphism, our fiber bundle map $F$ naturally induces a
$C^{\Lip}$-vector bundle map $DF^\zeta\colon\Xi^\zeta\carr$ over
$f\colon M\carr$ given by
\begin{equation}
  \label{eq:Df-zeta-def}
  DF^\zeta(v_x)=\dpf F\big(V_\zeta(x)\big)(v_x),
  \quad\forall x\in M,\ \forall v_x\in
  \Xi^\zeta_x=T_{V_\zeta(x)}\mc{E}_{x}, 
\end{equation}
where $\dpf F$ denotes the (partial) derivative along the fibers
defined in \S~\ref{sec:lyap-expon-fiber-bundle-maps}.

Then we get the following
\begin{proposition}
  \label{pro:transverse-deriv-cocycle}
  For every $\zeta\in\mc{E}_{x_0}$, the vector bundle
  $\R^d\to\Xi^\zeta\xrightarrow{\pi^\zeta} M$ is trivial and the
  vector bundle map $DF^\zeta$ is a $C^{\Lip}$-coboundary, \ie there
  exists a $C^{\Lip}$-vector bundle map $U^\zeta\colon\Xi^\zeta\to
  M\times\R^d$ satisfying
  \begin{displaymath}
    U^\zeta\circ DF^\zeta=(f\times Id_{\R^d})\circ U^\zeta.
  \end{displaymath}
  
  Moreover, the family $(U^\zeta)_{\zeta\in\mc{E}_{x_0}}$ can be
  chosen to vary continuously on $\zeta$.
\end{proposition}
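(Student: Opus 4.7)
The plan is to apply Kalinin's linear Liv\v{s}ic theorem (Theorem~\ref{thm:Kalinin1.1}) to the Lipschitz linear cocycle $DF^\zeta$ on the Lipschitz vector bundle $\Xi^\zeta$, and then to invoke the continuous-family version (Proposition~\ref{pro:Kalinin-VariacionContinua}) for the last assertion.

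First I would verify the periodic orbit obstructions for $DF^\zeta$. For any $p\in\Fix(f^n)$, the point $V_\zeta(p)$ is, by Proposition~\ref{pro:PartialHypImplicaGraficos}, the unique point in $\mc{V}_\zeta\cap\mc{E}_p$; and since $\mc{V}_\zeta$ is $F$-invariant, it follows that $F^n(V_\zeta(p))=V_\zeta(p)$. Moreover, \eqref{eq:POO-bund-map} asserts $F^n_p=id_{\mc{E}_p}$, whose fiberwise derivative at $V_\zeta(p)$ is the identity of $T_{V_\zeta(p)}\mc{E}_p=\Xi^\zeta_p$. Thus $(DF^\zeta)^{(n)}(p)=Id$ for every $p\in\Fix(f^n)$. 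Second, $DF^\zeta$ is Lipschitz because $V_\zeta$ is so (Lemma~\ref{lem:V-zeta-holder-reg}) and the fiberwise derivative $\dpf F$ of the $C^{\Lip,1}$-map $F$ is Lipschitz as a map defined on $\mc{E}$.

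Next I would apply Kalinin's theorem to produce the trivialization. Although Theorem~\ref{thm:Kalinin1.1} is stated for cocycles on a trivial bundle, the very same argument works for cocycles acting on any Lipschitz vector bundle: one chooses a finite trivializing atlas of $\Xi^\zeta$, reads $DF^\zeta$ in these charts as locally defined Lipschitz $\GL_d(\R)$-valued cocycles (with Lipschitz $\GL_d(\R)$-valued transition functions between charts), and notices that the POO condition is chart-independent. Running Kalinin's construction based at a point $x_0$ with dense forward and backward orbit, and with the initial condition $U^\zeta(x_0)=Id_{\R^d}$ after fixing a frame of $\Xi^\zeta_{x_0}$, yields a globally defined Lipschitz vector bundle isomorphism $U^\zeta\colon\Xi^\zeta\to M\times\R^d$ satisfying $U^\zeta\circ DF^\zeta=(f\times Id_{\R^d})\circ U^\zeta$. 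This simultaneously establishes the triviality of $\Xi^\zeta$ and the coboundary property of $DF^\zeta$.

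Finally, for the continuous dependence on $\zeta$, I would use the continuity of the lamination $\{\mc{V}_\zeta\}_{\zeta\in\mc{E}_{x_0}}$ established after Proposition~\ref{pro:su-saturation-V-zeta}: in a fixed trivializing atlas of the vertical tangent bundle of $\mc{E}$, the sections $V_\zeta$ vary continuously with $\zeta$, so the cocycles $DF^{\zeta'}$ for $\zeta'$ near $\zeta$ form a continuous family of Lipschitz $\GL_d(\R)$-valued cocycles in a common framing. Proposition~\ref{pro:Kalinin-VariacionContinua} then delivers the continuous dependence of the transfer functions $U^\zeta$. The main obstacle is the a priori non-triviality of $\Xi^\zeta$, which blocks a direct citation of Theorem~\ref{thm:Kalinin1.1}; I would resolve it by showing that the POO hypothesis forces the local solutions produced by Kalinin's construction in different trivializing charts to agree on overlaps, thereby simultaneously trivializing the bundle and providing the global transfer function.
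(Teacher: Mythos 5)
Your overall strategy---check \eqref{eq:POO-bund-map} for $DF^\zeta$, feed the problem into Kalinin's linear results, and get continuity in $\zeta$ from Proposition~\ref{pro:Kalinin-VariacionContinua}---is the same toolbox the paper uses, and your verification that $DF^\zeta$ inherits the periodic orbit obstructions and Lipschitz regularity matches the paper's (terser) opening line. Where you genuinely diverge is the treatment of the a priori non-triviality of $\Xi^\zeta$. The paper never extends Theorem~\ref{thm:Kalinin1.1} to non-trivial bundles: it only uses the bundle-map version of Theorem~\ref{thm:Kalinin1.4} (explicitly licensed by Remark~\ref{rem:FiberBundleKalinin}) to get vanishing exponents for $DF^\zeta$ with respect to every invariant measure, and then reuses its own machinery---zero exponents imply domination (Proposition~\ref{prop:zero-exp-impl-dom}) and a dominated bundle map satisfying the obstructions admits a continuous trivialization (the chain culminating in Proposition~\ref{pro:H-homeo-definition}), applied to the derived linear cocycle---to conclude that $\Xi^\zeta$ is trivial; only then does it quote Proposition~\ref{pro:Kalinin-VariacionContinua} for the resulting $\GL_d(\R)$-valued family. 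You instead propose to rerun Kalinin's proof of Theorem~\ref{thm:Kalinin1.1} directly on the non-trivial Lipschitz bundle, obtaining trivialization and transfer map in one stroke. That route is plausible (the construction propagates a frame of $\Xi^\zeta_{x_0}$ along a dense orbit and extends it H\"older-continuously via the closing lemma, \eqref{eq:POO} and the uniform subexponential growth coming from zero exponents), but as written it rests on the bare assertion that ``the very same argument works'', a stronger import than the paper allows itself; and your proposed repair---making local solutions in different charts agree on overlaps---misdescribes that construction, which is global along a dense orbit rather than chart-by-chart. Finally, your continuity-in-$\zeta$ step quietly presupposes a common framing in which the cocycles $DF^{\zeta'}$ become $\GL_d(\R)$-valued: Proposition~\ref{pro:Kalinin-VariacionContinua} is stated exactly in that setting, so you need trivializations of the $\Xi^{\zeta'}$ chosen continuously in $\zeta'$ (this is precisely what the paper's triviality step supplies, and even there it is left implicit), not merely a trivializing atlas of the vertical tangent bundle of $\mc{E}$, which is only local. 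In short: correct in outline and close in spirit, but the triviality of $\Xi^\zeta$ is handled by an asserted extension of Kalinin's theorem rather than by the paper's self-contained argument, and the two glosses just mentioned would need to be repaired for this to count as a complete proof.
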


\begin{proof}
  Since $F$ satisfies \eqref{eq:POO-bund-map} condition, $DF^\zeta$
  must satisfy it, too. Hence, by Theorem~\ref{thm:Kalinin1.4},
  $DF^\zeta$ has zero Lyapunov exponents with respect to any
  $f$-invariant probability measure. In particular, invoking
  Proposition~\ref{pro:H-homeo-definition} we conclude that the vector
  bundle $\R^d\to\Xi^\zeta\xrightarrow{\pi^\zeta} M$ is trivial and we
  can apply Proposition~\ref{pro:Kalinin-VariacionContinua} to obtain
  a continuous family $U^\zeta$ of solutions, as desired.
\end{proof}

Then we get the following 
\begin{corollary}
  \label{cor:DFn-uniform-estimate} 
  If $F$ is $1$-dominated and satisfies \eqref{eq:POO-bund-map}, then
  there exists $C>0$ such that
  \begin{displaymath}
    \norm{\partial_{\mathrm{fib}}F^n(v)}\leq C, \quad\forall
    n\in\Z,\ \forall \zeta\in\mc{E},\ \forall v\in
    T_\zeta\mc{E_{\pi(\zeta)}}. 
  \end{displaymath}
\end{corollary}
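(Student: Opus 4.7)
The plan is to reduce the corollary to Proposition~\ref{pro:transverse-deriv-cocycle} by exploiting the lamination $\{\mc{V}_{\hat\zeta}\}_{\hat\zeta\in\mc{E}_{x_0}}$ produced in Proposition~\ref{pro:su-saturation-V-zeta}: every $\zeta\in\mc{E}$ lies on a unique leaf $\mc{V}_{\hat\zeta}$ and, along that leaf, the fiberwise derivative cocycle is precisely the linear cocycle $DF^{\hat\zeta}$ studied in Proposition~\ref{pro:transverse-deriv-cocycle}. Thus I should interpret $\norm{\dpf F^n(v)}$ as the operator norm of $\dpf F^n(\zeta)\colon T_\zeta\mc{E}_{\pi(\zeta)}\to T_{F^n(\zeta)}\mc{E}_{f^n(\pi(\zeta))}$ and prove uniform boundedness of this operator norm in $n\in\Z$ and $\zeta\in\mc{E}$.

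First I would set up the identification: for any $\zeta\in\mc{E}$ let $\hat\zeta\in\mc{E}_{x_0}$ be the unique point with $\zeta\in\mc{V}_{\hat\zeta}$, so $\zeta=V_{\hat\zeta}(\pi(\zeta))$ and, by invariance $F(\mc{V}_{\hat\zeta})=\mc{V}_{\hat\zeta}$, we have $F^n(\zeta)=V_{\hat\zeta}(f^n(\pi(\zeta)))$. Hence the tangent space $T_\zeta\mc{E}_{\pi(\zeta)}$ coincides with the fiber $\Xi^{\hat\zeta}_{\pi(\zeta)}$ of the vector bundle constructed before Proposition~\ref{pro:transverse-deriv-cocycle}, and by the definition \eqref{eq:Df-zeta-def} the iterate $\dpf F^n(\zeta)$ equals the $n$-th iterate $(DF^{\hat\zeta})^n$ of the linear cocycle $DF^{\hat\zeta}$ evaluated at $\pi(\zeta)$.

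Next I invoke Proposition~\ref{pro:transverse-deriv-cocycle}: there exists a continuous family of $C^{\Lip}$-trivializations $U^{\hat\zeta}\colon\Xi^{\hat\zeta}\to M\times\R^d$ satisfying $U^{\hat\zeta}\circ DF^{\hat\zeta}=(f\times Id_{\R^d})\circ U^{\hat\zeta}$. Writing $U^{\hat\zeta}_x$ for the restriction to the fiber over $x$, the cohomological equation iterates to the identity
\begin{displaymath}
  (DF^{\hat\zeta})^n\big|_{\Xi^{\hat\zeta}_x}
  =\big(U^{\hat\zeta}_{f^n(x)}\big)^{-1}\circ U^{\hat\zeta}_x,
  \quad\forall n\in\Z,\ \forall x\in M,
\end{displaymath}
so the operator norm of $(DF^{\hat\zeta})^n$ at $x$ is bounded by $\norm{(U^{\hat\zeta}_{f^n(x)})^{-1}}\cdot\norm{U^{\hat\zeta}_x}$. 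Since $U^{\hat\zeta}$ and its inverse depend continuously on $(\hat\zeta,x)$ over the compact set $\mc{E}_{x_0}\times M$, both factors are uniformly bounded; combining this with the identification of the first step yields the desired uniform constant $C$.

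The main obstacle here is purely notational/organizational: one has to be careful that the linearization $U^{\hat\zeta}$ supplied by Proposition~\ref{pro:transverse-deriv-cocycle} varies continuously on $\hat\zeta\in\mc{E}_{x_0}$, so that uniform bounds on $\norm{U^{\hat\zeta}_x}$ and $\norm{(U^{\hat\zeta}_x)^{-1}}$ are genuinely available on the compact space $\mc{E}_{x_0}\times M$; this is exactly the content ensured by Proposition~\ref{pro:Kalinin-VariacionContinua} invoked in that construction, so no further work is required.
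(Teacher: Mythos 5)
Your proposal is correct and follows exactly the route the paper intends: the paper's own proof is a one-line appeal to Propositions~\ref{pro:transverse-deriv-cocycle} and \ref{pro:Kalinin-VariacionContinua}, and you simply fill in the details — identifying $\dpf F^n(\zeta)$ with the iterates of the linear cocycle $DF^{\hat\zeta}$ along the leaf through $\zeta$, writing these as $\big(U^{\hat\zeta}_{f^n(x)}\big)^{-1}\circ U^{\hat\zeta}_x$, and using continuity of $(\hat\zeta,x)\mapsto U^{\hat\zeta}_x$ on the compact set $\mc{E}_{x_0}\times M$ to get the uniform constant.
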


\begin{proof}
  This is a straightforward consequence of
  Propositions~\ref{pro:transverse-deriv-cocycle} and
  \ref{pro:Kalinin-VariacionContinua}.
\end{proof}

Then we finally get
\begin{proposition}
  \label{pro:H-conjugacy-smoothness}
  The holonomy maps given by \eqref{eq:Hol-definition} are
  differentiable and consequently, the map $H\colon\mc{E}\to M\times
  N$ defined by \eqref{eq:H-definition} is a $C^{\Lip,1}$-bundle map.
\end{proposition}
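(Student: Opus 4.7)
The aim is to show that for every $x \in M$ the holonomy map $\Hol_{x,x_0}\colon \mc{E}_x \to \mc{E}_{x_0}$ is a $C^1$-diffeomorphism whose derivative depends Lipschitz-continuously on $x$; once this is established, the conclusion that $H$ is a $C^{\Lip,1}$-bundle map follows at once from the defining formula \eqref{eq:H-definition}.

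The plan is to read off the fiber-derivative from the coboundary trivialization provided by Proposition~\ref{pro:transverse-deriv-cocycle}. For each $\zeta \in \mc{E}_{x_0}$, let $U^\zeta\colon \Xi^\zeta \to M \times \R^d$ denote the continuous family of vector bundle trivializations conjugating $DF^\zeta$ to the trivial cocycle $f \times Id_{\R^d}$. Iterating the relation $U^\zeta_{f(x)} \circ DF^\zeta|_x = U^\zeta_x$ yields
\[
\dpf F^n(\zeta) = \bigl(U^\zeta_{f^n(x_0)}\bigr)^{-1} \circ U^\zeta_{x_0}, \qquad \forall n \in \Z,
\]
which suggests introducing the candidate differential
\[
L_\zeta(y) := \bigl(U^\zeta_y\bigr)^{-1} \circ U^\zeta_{x_0}\colon T_\zeta\mc{E}_{x_0} \longrightarrow T_{V_\zeta(y)}\mc{E}_y
\]
for every $y \in M$. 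By construction, $L_\zeta(y)$ depends continuously on $\zeta$ and Lipschitz-continuously on $y$.

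The core step is to verify that $L_\zeta(y)$ is genuinely the fiber-derivative at $\zeta$ of the map $\psi_y\colon \mc{E}_{x_0} \ni \zeta \mapsto V_\zeta(y) \in \mc{E}_y$. For $y = f^n(x_0)$ this is immediate: $\psi_y$ coincides with $F^n|_{\mc{E}_{x_0}}$, which is a $C^1$-diffeomorphism with fiber-derivative $\dpf F^n(\zeta) = L_\zeta(f^n(x_0))$. For a general $y \in M$ I would approximate $y$ by a sequence $y_k = f^{n_k}(x_0) \to y$ (possible since the forward $f$-orbit of $x_0$ is dense) and argue inside a fixed trivializing chart of $\mc{E}$ around $y$. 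In such a chart, continuity of $V_\zeta(y)$ in $y$ yields uniform convergence $\psi_{y_k} \to \psi_y$ on compact subsets of $\mc{E}_{x_0}$, while continuity of $U^\zeta_y$ in $y$ yields uniform convergence of the candidate fiber-derivatives $L_\zeta(y_k) \to L_\zeta(y)$. The classical principle that a uniform limit of $C^1$-maps whose derivatives converge uniformly is itself $C^1$ then produces the identity $D\psi_y(\zeta) = L_\zeta(y)$.

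The main technical obstacle will be to carry out this limit procedure cleanly: one must keep all the maps $\psi_{y_k}$ inside a common trivializing chart and guarantee that their Taylor remainders at $\zeta$ do not blow up along the approximating sequence. Here Corollary~\ref{cor:DFn-uniform-estimate} plays a crucial role, since it supplies a uniform fiberwise $C^1$-bound forcing the family $\{\psi_{y_k}\}$ to be equi-Lipschitz in the fiber variable. Once fiberwise $C^1$-regularity of every $\psi_y$ is obtained, the Lipschitz dependence of $D\psi_y(\zeta)$ on $y$ is inherited from the $C^{\Lip}$-regularity of the trivialization $U^\zeta$ combined with Lemma~\ref{lem:V-zeta-holder-reg}, and we conclude that $H$ is indeed a $C^{\Lip,1}$-bundle map.
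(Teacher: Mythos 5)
Your argument is correct and follows essentially the same route as the paper: approximate the holonomy by holonomies along the dense $f$-orbit of $x_0$ (which are restrictions of iterates of $F$, hence $C^1$), use Corollary~\ref{cor:DFn-uniform-estimate} together with Propositions~\ref{pro:transverse-deriv-cocycle} and \ref{pro:Kalinin-VariacionContinua} to get convergence of the maps and of their fiber derivatives, and conclude by the classical $C^1$-limit theorem. The only differences are cosmetic: you fix the source fiber at $x_0$ and approximate only the target point, and you make explicit the limit derivative $L_\zeta(y)=(U^\zeta_y)^{-1}\circ U^\zeta_{x_0}$, which is exactly the mechanism behind the paper's asserted convergence of $D\Hol_i(p)$.
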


\begin{proof}
  Given arbitrary points $x,y\in M$, we need to show that the holonomy
  map $\Hol_{x,y}\colon\mc{E}_x\to\mc{E}_y$ associated to the
  lamination $\{\mc{V}\}_{\zeta\in\mc{E}_{x_0}}$, which is clearly a
  homeomorphism, is indeed a $C^1$-diffeomorphism.

  To do this, first observe that since each leaf of the lamination
  $\{\mc{V}_\zeta\}_{\zeta\in\mc{E}_{x_0}}$ is $F$-invariant, it holds
  \begin{equation}
    \label{eq:Hol-points-same-orb}
    \Hol_{f^m(x),f^n(x)}=F^{n-m}\big|_{\mc{E}_{f^m(x)}}, \quad\forall
    x\in\mc{E},\ \forall m,n\in\Z.
  \end{equation}
  Consequently, holonomy maps between any two points of the same
  $f$-orbit are indeed $C^1$-diffeomorphisms.

  To deal with the general case, let $x,y\in M$ be arbitrary points
  and consider two trivializing charts $\varphi_i\colon
  \pi^{-1}(U_i)\to U_i\times N$, with $i=1,2$, such that $x\in U_1$
  and $y\in U_2$. Recalling we have chosen $x_0\in M$ so that its
  forward $f$-orbit is dense in $M$, we can find two sequences of
  natural numbers $(m_i)$ and $(n_i)$ such that $U_1\ni
  f^{m_i}(x_0)\to x$ and $U_2\ni f^{n_i}(x_0)\to y$, as
  $i\to\infty$.

  Then, for each $i\geq 1$, let us define $\Hol_i\in\Diff{}1(N)$ by
  \begin{displaymath}
    \Hol_i(p):= \mathrm{pr}_2\circ \varphi_2\circ
    \Hol_{f^{m_i}(x_0),f^{n_i}(x_0)}
    \circ\varphi_1^{-1}\big(f^{m_i}(x_0),p\big), 
  \end{displaymath}
  and $\Hol\in\Homeo{}(N)$ by
  \begin{displaymath}
    \Hol(p)=\mathrm{pr}_2\circ \varphi_2\circ
    \Hol_{x,y}\circ\varphi_1^{-1}\big(x,p\big),
  \end{displaymath}
  for every $p\in N$. We want to show $\Hol\in\Diff{}1(N)$, too.

  By continuity of the lamination
  $\{\mc{V}_\zeta\}_{\zeta\in\mc{E}_{x_0}}$, when $i\to\infty$,
  $\Hol_i\to\Hol$ pointwisely. By
  Corollary~\ref{cor:DFn-uniform-estimate} and Arzel\`a-Ascoli
  theorem, we conclude $\Hol_i\to\Hol$ $C^0$-uniformly. On the other
  hand, by Propositions~\ref{pro:transverse-deriv-cocycle} and
  \ref{pro:Kalinin-VariacionContinua}, we have the sequence of
  derivatives $(D\Hol_i(p))_{i\geq 1}$ is also convergent, for each
  $p\in N$. Consequently, $\Hol$ is $C^1$ and then,
  $\Hol_{x,y}\colon\mc{E}_x\to\mc{E}_y$ is a diffeomorphism, as
  desired.
\end{proof}

Finally, it remains to show that $(iii)$ implies $(i)$ in
Theorem~\ref{thm:lyap-exp-dom-cobound}.  But this is obvious, because
a $C^{\Lip,1}$-coboundary is, by definition, conjugate to the map
$(f\times id_N)\colon M\times N\carr$ via a $C^{\Lip,1}$-fiber bundle
conjugacy, and therefore, every Lyapunov exponent must vanish.

\section{Domination as a consequence of POO condition}
\label{sec:POO-implies-zero-Lyap}

In this section we shall review some contexts where condition
\eqref{eq:POO} alone implies that the cocycle is dominated, and as a
consequence of Theorem~\ref{thm:lyap-exp-dom-cobound}, it is a
coboundary.

We start proving Theorem B which follows from
Theorem~\ref{thm:lyap-exp-dom-cobound} and Katok closing lemma
\cite{KatokLyapExp}:

\begin{proof}[Proof of Theorem B]
  Let $S\to\mc{E}=M\times S\xrightarrow{\pi} M$ denote the trivial
  fiber bundle and $F\colon M\times S\carr$ be the $C^{1+\alpha}$
  skew-product over $f$ induced by $\Phi$ as in
  \eqref{eq:induced-skew-product}. Oberserve that $F$ satisfies
  \eqref{eq:POO-bund-map}.

  Let us suppose there exists an $F$-invariant ergodic probability
  measure $\hat\mu$ such that $\lambda^+(F,\hat\mu)\neq 0$. 

  Since $\Phi$ takes values in the group of area-preserving
  diffeomorphisms of $S$, by Oseledets theorem we know that
  \begin{displaymath}
    \label{eq:lambda+-lambda-area-preserving}
    \lambda^-(F,\hat\mu)+\lambda^+(F,\hat\mu)=0.
  \end{displaymath}
  So, we have 
  \begin{displaymath}
    \lambda^-(F,\hat\mu)<0<\lambda^+(F,\hat\mu),
  \end{displaymath}
  and since $f\colon M\carr$ is an Anosov diffeomorphism, this implies
  $\hat\mu$ is a hyperbolic measure for $F$ (\ie all its Lyapunov
  exponents given by Oseledets theorem are different from zero).

  So, applying Katok closing lemma \cite[Corollary 4.3]{KatokLyapExp},
  we conclude that $F$ exhibits a hyperbolic periodic point. But, by
  \eqref{eq:POO-bund-map}, if $\zeta_0\in\mc{E}$ is periodic with
  $F^n(\zeta_0)=\zeta_0$, then $F^n(\zeta)=\zeta$, for every
  $\zeta\in\mc{E}_{\pi{\zeta_0}}$, and so $\zeta_0$ is not hyperbolic,
  getting a contradiction.
\end{proof}

The amount of regularity required in the fiber direction is essential
in our argument and it is the usual one in Pesin's theory which allows
to obtain a subexponential neighborhoods of a regular orbit with good
estimates on the bundles of the Oseledet's splitting (see
\cite[Suplement]{KatokHasselblatt}). The recent examples of
\cite{BonattiCrovisierShinohara} show that improving this regularity
requires new ideas which should not be different from the general case
of $\Diff{}{1}(N)$ cocycles with arbitrary $N$.

In order to prove Theorem A, we need the following result that should
be considered the main one of this section:
\begin{theorem}
  \label{thm:non-zero-exp}
  Let $N\to\mc{E}\xrightarrow{\pi}M$ be a $C^{\alpha,1}$-fiber bundle,
  $F\colon\mc{E}\carr$ be a $C^{\alpha,1}$-bundle map over an
  $\alpha$-H\"older hyperbolic homeomorphism $f\colon M\carr$.  If
  there exists an ergodic measure $\hat\mu\in\M(F)$ with
  $\lambda^+(F,\hat\mu)<0$, then there exists $\zeta_0\in\Per(F)$ which
  is uniformly contracting along the fiber, \ie if $n>0$ denotes the
  period of $\zeta_0$, then all the eigenvalues of the linear map
  $\dpf F^n(\zeta_0)\colon T_{\zeta_0}\mc{E}_{\pi(\zeta_0)}\carr$ have
  modulus strictly smaller than $1$.
\end{theorem}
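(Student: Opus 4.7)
My plan is to combine a Pesin-style uniform estimate with the Anosov closing lemma. Since $\hat\mu$ is ergodic and $\lambda^+(F,\hat\mu)=-\delta<0$, every fiber Lyapunov exponent of $\hat\mu$ is at most $-\delta$. By the subadditive ergodic theorem and Egoroff's theorem, I choose a compact set $\Lambda\subset\mc{E}$ with $\hat\mu(\Lambda)>0$ and constants $C_0,N_0$ so that
\begin{displaymath}
  \norm{\dpf F^n(\zeta)}\leq C_0 e^{-\delta n/2},\quad\forall\zeta\in\Lambda,\ \forall n\geq N_0.
\end{displaymath}
Shrinking $\Lambda$ further I also arrange that the modulus of continuity of $\dpf F$ is controlled uniformly on a fiber tubular neighborhood of $\Lambda$.

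Next, apply Poincar\'{e} recurrence to $\hat\mu\vert_\Lambda$ to pick $\zeta\in\Lambda$ whose forward $F$-orbit returns to $\Lambda$ with positive density. For arbitrarily small $\eta>0$ there is $n$ arbitrarily large with $F^n(\zeta)\in\Lambda$ and $d(\pi(\zeta),f^n(\pi(\zeta)))<\eta$. Setting $x:=\pi(\zeta)$ and invoking the Anosov closing lemma (Theorem~\ref{thm:AnosovClosing}), I obtain $p\in\Fix(f^n)$ whose orbit exponentially shadows $\{f^k(x)\}_{k=0}^{n-1}$ in the sense that $d(f^k(x),f^k(p))\leq c\eta\,e^{-\lambda\min\{k,n-k\}}$ for all $0\leq k<n$.

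The heart of the argument is to lift $p$ to a fiber-attracting periodic point $\zeta_0\in\mc{E}_p\cap\Fix(F^n)$. Identifying $\mc{E}_{f^k(x)}$ with $\mc{E}_{f^k(p)}$ through local trivializations of the bundle, I view the restriction $F^n\vert_{\mc{E}_p}$ as a self-map of a small ball $B\subset\mc{E}_p$ centered at the point corresponding to $\zeta$. Because $F$ is $C^{\alpha,1}$, the difference between $\dpf F$ evaluated at a point of $\mc{E}_{f^k(p)}$ near the shadow of $F^k(\zeta)$ and at $F^k(\zeta)$ itself is controlled by a constant times $d(f^k(x),f^k(p))^\alpha\leq(c\eta)^\alpha e^{-\alpha\lambda\min\{k,n-k\}}$. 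Summing the resulting geometric series and multiplying along the orbit, the cumulative perturbation of the product $\dpf F^n$ stays bounded, so for $\eta$ small and $n$ large
\begin{displaymath}
  \norm{\dpf F^n\vert_{\mc{E}_p}(\xi)}\leq C_1 e^{-\delta n/2},\quad\forall\xi\in B.
\end{displaymath}
This makes $F^n\vert_{\mc{E}_p}$ a strict contraction of $B$ into itself, and the Banach fixed-point theorem produces a unique $\zeta_0\in B$ with $F^n(\zeta_0)=\zeta_0$. All eigenvalues of $\dpf F^n(\zeta_0)$ have modulus at most $C_1 e^{-\delta n/2}<1$, giving the desired uniformly contracting periodic point.

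The main obstacle I anticipate is the careful calibration of the size of $B$, the recurrence threshold $\eta$ and the period $n$: one must shrink $\eta$ as a function of $n$ so that the accumulated H\"older perturbation of the product $\prod\dpf F$ stays small relative to $e^{-\delta n/2}$, while simultaneously keeping the entire $F^n$-trajectory of every point of $B$ inside the tubular neighborhood of the orbit of $\zeta$ where the uniform derivative estimates are valid. This is a standard but notationally involved Pesin-type shadowing computation, adapted to the bundle setting where only the fiber direction carries a $C^1$ structure.
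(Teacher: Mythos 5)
Your overall strategy matches the paper's: choose a good set of positive measure, use recurrence, apply the Anosov closing lemma, then perturb the fiber orbit and control the accumulated H\"older error to produce a ball $B\subset\mc{E}_p$ mapped into itself. However, there is a genuine gap in the way you propose to control the drift of the shadowed fiber orbit. Your Egoroff-type bound $\norm{\dpf F^n(\zeta)}\leq C_0e^{-\delta n/2}$ is valid only for $\zeta\in\Lambda$ and $n\geq N_0$. When you iterate the perturbation inequality $d_k\leq a_k+b_k d_{k-1}$ along the orbit, the Gronwall-type sum
\begin{displaymath}
  d_n\leq \sum_{i=1}^n a_i\prod_{j=i+1}^n b_j \approx \sum_{i=1}^n a_i \norm{\dpf F^{\,n-i}\big(F^i(\zeta_0)\big)}
\end{displaymath}
involves tail products starting at the intermediate iterates $F^i(\zeta_0)$, which are generically \emph{not} in $\Lambda$. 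Your uniform estimate and your control of the modulus of continuity on a tubular neighborhood of $\Lambda$ say nothing about these tail products, and a priori they can spike well above $1$ at intermediate times. Without controlling them, you cannot close the inductive estimate showing that $F^k(\zeta_j)$ stays inside the region where the derivative bounds are valid.

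The paper resolves exactly this issue by constructing a Lyapunov adapted (Pesin) metric $\norm{\cdot}'$ (Lemma~\ref{lem:changemetric}) in which the fiber derivative contracts at \emph{every single step}: $\norm{\dpf F(\zeta)v}'_{F(\zeta)}\leq e^{\lambda^++\eps}\norm{v}'_\zeta$ at $\hat\mu$-a.e.\ $\zeta$. This makes each $b_k\leq e^{\lambda^++3\eps}<1$ individually, so the Gronwall sum telescopes into a geometric series regardless of where the intermediate iterates sit. Lusin's theorem is then used (on the measurable distortion function $A$ of the adapted metric, and on the Pesin chart radius $\rho$) to obtain a compact set $X$ of positive measure where these constants are uniformly bounded, and the recurrent point is taken in $X$. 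You would need to introduce this adapted metric (or an equivalent pointwise one-step contraction device) before your inductive perturbation estimate can be carried out; as written, your proposal replaces it with a weaker Egoroff bound that does not suffice.
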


It is interesting to remark that applying
Theorem~\ref{thm:non-zero-exp} to the natural action induced by a
linear cocycle on a suitable Grasmannian fiber bundle (corresponding
to the dimension of the subspace with largest Lyapunov exponent), one
can reprove Kalinin's result on approximation\footnote{The statement
  above only allows to show that if $\Phi\colon M\to\GL_d(\R)$ is an
  $\alpha$-H\"older cocycle satisfying the \eqref{eq:POO} then every
  measure has zero Lyapunov exponents, however, the results below
  allow to recover the complete result if desired.} of Lyapunov
exponents \cite[Theorem 1.4]{KalininLivThmMat}.

Now, we can prove Theorem A as a combination of Theorems
\ref{thm:lyap-exp-dom-cobound} and \ref{thm:non-zero-exp}:

\begin{proof}[Proof of Theorem A]
  Let $\R/\Z\to\mc{E}=M\times\R/\Z\xrightarrow{\pi} M$ denote the
  trivial fiber bundle and $F\colon M\times\R/\Z\carr$ be the
  skew-product over $f$ induced by $\Phi$ as in
  \eqref{eq:induced-skew-product}. Since $\Phi$ satisfies
  \eqref{eq:POO}, $F$ satisfies \eqref{eq:POO-bund-map}.

  Hence, for every $\zeta\in\Per(F)$ with $F^n(\zeta)=\zeta$ it
  clearly holds $\partial_{\mathrm{fib}}F^n=D\Phi^{(n)}\equiv id$ and
  consequently, all the eigenvalues are equal to $1$. So, applying
  Theorem~\ref{thm:non-zero-exp} to $F$ and $F^{-1}$ we get
  \begin{displaymath}
    -\lambda^+(F^{-1},\hat\mu)=\lambda^-(F,\mu)\leq
    0\leq\lambda^+(F,\hat\mu). 
  \end{displaymath}
  But since the fibers are one-dimensional, we can apply Birkhoff
  ergodic theorem to conclude that
  $\lambda^-(F,\hat\mu)=\lambda^+(F,\hat\mu)$. Consequently,
  $\lambda^-(F,\hat\mu)=\lambda^+(F,\hat\mu)=0$ and by
  Theorem~\ref{thm:lyap-exp-dom-cobound}, $F$ is a
  $C^{\alpha,1}$-coboundary, as desired.
\end{proof}

\subsection{Proof of Theorem \ref{thm:non-zero-exp}}

From the uniform continuity of $\dpf F$ and $f$, it easily follows
\begin{lemma}
  \label{Lemma-UniformContinuity}
  For every $\delta>0$, there exists $\chi>0$ such that for every
  $\eta,\xi\in\mc{E}$ satisfying
  \begin{displaymath}
    d_{\mc{E}}\big( F^{i}(\eta), F^{i}(\xi)\big) \leq\chi,\quad
    \text{for every } i\in\{0,\ldots k\},
  \end{displaymath}
 then one has
 \begin{displaymath}
   \prod_{i=0}^{k-1} \norm{\dpf F\big( F^i(\eta)\big)} \leq
   e^{k\delta}  \prod_{i=0}^{k-1} \norm{\dpf F\big(F^i(\xi)\big)}. 
 \end{displaymath}
\end{lemma}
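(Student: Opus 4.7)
The plan is to take logarithms and reduce the multiplicative estimate to an additive one, then invoke uniform continuity of the real-valued function $\zeta \mapsto \log \norm{\dpf F(\zeta)}$ on the compact space $\mc{E}$.

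First I would note that, since $F$ is a $C^{\alpha,1}$-bundle map over the compact base $M$ with compact model fiber $N$, the total space $\mc{E}$ is itself compact in the topology induced by $d_{\mc{E}}$. Each fiber restriction $F|_{\mc{E}_x}\colon\mc{E}_x\to\mc{E}_{f(x)}$ is a $C^1$-diffeomorphism, so $\dpf F(\zeta)$ is an invertible linear map between tangent spaces; with respect to the fixed Hölder-continuous Riemannian structure on $\mc{E}$ chosen in \S\ref{sec:fib-map-cocyc}, the operator norm $\norm{\dpf F(\zeta)}$ is a strictly positive and continuous function of $\zeta\in\mc{E}$. Hence
\begin{equation*}
  \varphi(\zeta) := \log \norm{\dpf F(\zeta)}
\end{equation*}
is a continuous, bounded, real-valued function on the compact metric space $(\mc{E},d_{\mc{E}})$, and is therefore uniformly continuous.

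Now, given $\delta>0$, uniform continuity of $\varphi$ yields a $\chi>0$ such that $d_{\mc{E}}(\zeta_1,\zeta_2)\leq\chi$ implies $|\varphi(\zeta_1)-\varphi(\zeta_2)|\leq\delta$. Applying this to the pairs $\zeta_1=F^i(\eta)$ and $\zeta_2=F^i(\xi)$ for $i=0,1,\ldots,k-1$ under the hypothesis, and summing the resulting $k$ inequalities, gives
\begin{equation*}
  \sum_{i=0}^{k-1}\varphi\bigl(F^i(\eta)\bigr)-\sum_{i=0}^{k-1}\varphi\bigl(F^i(\xi)\bigr)\leq k\delta.
\end{equation*}
Exponentiating turns this into the stated multiplicative estimate.

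The only point that deserves a check is that the metric $d_{\mc{E}}$ defined in \eqref{eq:d_E-definition}, built via an infimum over trivializing charts truncated by the Lebesgue constant $L$, is indeed compatible with the topology of $\mc{E}$ in which $\dpf F$ and the Riemannian norm are continuous: once two points are close enough in $d_{\mc{E}}$ they fall into a common trivializing chart, where $d_{\mc{E}}$ is comparable to the sum of the base distance and the $d_N$-fiber distance. With this routine check in hand, the lemma is a bookkeeping consequence of compactness and no genuine obstacle arises.
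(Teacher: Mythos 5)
Your argument is correct and coincides with what the paper intends: the paper states the lemma "easily follows from the uniform continuity of $\dpf F$ and $f$" without giving details, and your log-sum-exponentiate reduction via uniform continuity of $\zeta\mapsto\log\norm{\dpf F(\zeta)}$ on the compact total space is exactly that routine verification.
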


Along the proof we shall assume that
$\lambda^+:=\lambda^+(F,\hat\mu)<0$.

It is a classical fact that one can choose measurable adapted
metrics which see the contraction at each iterate (see for example
Proposition 8.2 of \cite{AbdenurBonattiCrovisierNonunif}):

\begin{lemma}\label{lem:changemetric}
For every $\eps>0$ there exists an integer $N>0$ and a measurable
function $A: \mc{E} \to [1,+\infty)$ such that:
\begin{itemize}
\item The sequence $\big(A(F^n(\zeta)\big)_{n\in\mathbb{Z}}$
varies sub-exponentially (\ie one has that for $\hat\mu$-almost
every $\zeta \in \mc{E}$ the sequence $\frac{1}{|n|} \log
|A(F^n(\zeta))|$ converges to $0$ as $|n|\to \infty$).

\item If we denote $\norm{ \cdot }_{\zeta}'$ to the metric in
$T_{\zeta} \mc{E}_{\pi(\zeta)}$ defined as:
\begin{displaymath}
\norm{v }_{\zeta}' = \sum_{0 \leq k \leq N} e^{-k (\lambda^+ +
\eps)} A(F^k(\zeta)) \norm{\dpf F^k(\zeta) \cdot v }_{F^k(\zeta)}
\end{displaymath}
then, for $\hat \mu$ almost every $\zeta \in \mc{E}$ and every $v
\in T_\zeta \mc{E}_{\pi(\zeta)}$ one has that
\begin{displaymath}
\norm{\dpf F (\zeta) \cdot v}'_{F(\zeta)} \leq e^{(\lambda^+ +
\eps)} \norm{v}'_{\zeta}
\end{displaymath}
\end{itemize}
\end{lemma}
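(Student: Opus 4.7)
The plan is to carry out the classical Pesin-theoretic construction of a Lyapunov (adapted) metric along the fiber direction, tailored to the negative top exponent $\lambda^+:=\lambda^+(F,\hat\mu)<0$, as in \cite[Proposition 8.2]{AbdenurBonattiCrovisierNonunif}.

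First, after passing to an ergodic component of $\hat\mu$ if necessary, Kingman's subadditive ergodic theorem applied to $\log\norm{\dpf F^n(\zeta)}$ yields $\tfrac{1}{n}\log\norm{\dpf F^n(\zeta)}\to\lambda^+$ for $\hat\mu$-a.e.\ $\zeta$. Fix an auxiliary $\eps'\in(0,\eps)$ and set
\begin{displaymath}
A(\zeta):=\max\Bigl\{1,\ \sup_{n\ge 0} e^{-n(\lambda^++\eps')}\norm{\dpf F^n(\zeta)}\Bigr\},
\end{displaymath}
which is $\hat\mu$-a.e.\ finite and satisfies $A\ge 1$. Item~(1) -- subexponential variation of $(A(F^n\zeta))_{n\in\Z}$ -- is the classical \emph{tempering} statement and follows from the a.e.\ finiteness of $\log A$ via either Borel--Cantelli applied to the level sets $\{A>e^{n\delta}\}$ or via a Birkhoff-type argument comparing consecutive ergodic averages of a truncation $\log\min\{A,K\}$ (using the invertibility of $F$ on fibers to handle $n\to -\infty$).

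Second, to obtain item~(2) we reindex $k\mapsto k+1$ in $\norm{\dpf F(\zeta)\cdot v}'_{F\zeta}$: a short telescoping yields
\begin{displaymath}
\norm{\dpf F(\zeta)\cdot v}'_{F\zeta} - e^{\lambda^++\eps}\norm{v}'_\zeta = e^{-N(\lambda^++\eps)}A(F^{N+1}\zeta)\norm{\dpf F^{N+1}(\zeta)\cdot v} - e^{\lambda^++\eps}A(\zeta)\norm{v},
\end{displaymath}
so that the desired inequality is equivalent to the boundary estimate
\begin{displaymath}
e^{-(N+1)(\lambda^++\eps)}\,A(F^{N+1}\zeta)\,\norm{\dpf F^{N+1}(\zeta)}\ \le\ A(\zeta).
\end{displaymath}
Inserting $\norm{\dpf F^{N+1}(\zeta)}\le A(\zeta)\,e^{(N+1)(\lambda^++\eps')}$, which holds by the very definition of $A$, this reduces to the pointwise tempering bound $A(F^{N+1}\zeta)\le e^{(N+1)(\eps-\eps')}$, which is what item~(1) already guarantees once $N$ is taken sufficiently large (as a function of $\zeta$).

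The main subtlety is to achieve a \emph{single} uniform integer $N$ working on a full-$\hat\mu$-measure set, since $A$ is unbounded and the tempering rate is only pointwise. The standard remedy is to enlarge $A$ -- for instance by replacing it with an iterated sup such as $\widetilde A(\zeta):=\sup_{m\ge 0} e^{-m(\lambda^++\eps)}A(F^m\zeta)\norm{\dpf F^m(\zeta)}$, or via a Lusin/exhaustion argument on nested ``good'' sets of measure $\to 1$ -- so that the boundary estimate becomes automatic at the price of a mildly larger, but still tempered and a.e.\ finite, function. Verifying that the enlarged function retains the two required properties is routine Pesin bookkeeping; this is the step I expect to be the most delicate, and we follow the detailed treatment of \cite[Proposition 8.2]{AbdenurBonattiCrovisierNonunif}.
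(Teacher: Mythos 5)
First, note that the paper does not actually prove this lemma: it is invoked as a classical fact with a pointer to \cite[Proposition 8.2]{AbdenurBonattiCrovisierNonunif}, so your proposal has to stand on its own. Your telescoping computation is correct: reindexing shows that the asserted contraction is exactly equivalent to the boundary estimate $e^{-(N+1)(\lambda^++\eps)}A(F^{N+1}\zeta)\,\norm{\dpf F^{N+1}(\zeta)\cdot v}\leq A(\zeta)\norm{v}$, and the temperedness in the first item can indeed be obtained for a function of the type you define (the cleanest way is not Borel--Cantelli, which needs summability of the level sets, but the observation that $\log A$ has uniformly bounded increments along orbits, since $\dpf F^{\pm 1}$ is bounded on the compact bundle, combined with Birkhoff and Poincar\'e recurrence).

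The genuine gap is at the decisive step, and it is precisely the one you label ``routine Pesin bookkeeping''. With your $A$ and a \emph{fixed} $N$, the reduction to $A(F^{N+1}\zeta)\leq e^{(N+1)(\eps-\eps')}$ cannot hold $\hat\mu$-a.e.: since $F^{N+1}$ preserves $\hat\mu$, this would force $A$ to be essentially bounded, which it is not in general (if it were, no weight would be needed at all); and taking ``$N$ sufficiently large as a function of $\zeta$'' reverses the quantifiers of the statement, where one integer $N$ must work on a full-measure set. Your proposed repair is not routine either: for $\widetilde A(\zeta)=\sup_{m\geq0}e^{-m(\lambda^++\eps)}A(F^m\zeta)\norm{\dpf F^m(\zeta)}$ the boundary estimate does not follow from submultiplicativity, because the available inequality is $\norm{\dpf F^m(F^{N+1}\zeta)}\geq\norm{\dpf F^{m+N+1}(\zeta)}/\norm{\dpf F^{N+1}(\zeta)}$, i.e.\ the wrong direction; an upper bound would involve $\norm{(\dpf F^{N+1}(\zeta))^{-1}}$ and hence the smallest exponent, and the ratio $\norm{\dpf F^{n}}\cdot\norm{(\dpf F^{n})^{-1}}$ need not be subexponential when the fibre has dimension at least $2$ (the case relevant to Theorem B). Moreover the a.e.\ finiteness and temperedness of $\widetilde A$ is exactly the kind of statement being proved, so the argument becomes circular. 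In short, the single-$N$-plus-tempered-weight construction \emph{is} the content of the lemma, and your write-up defers it to the same reference the paper cites; as a standalone proof it is incomplete. A cleaner classical route, if you want to avoid the finite-$N$ bookkeeping, is the infinite-sum Lyapunov norm $\sum_{k\geq0}e^{-k(\lambda^++\eps)}\norm{\dpf F^k(\zeta)\cdot v}$, which converges a.e., satisfies the contraction inequality exactly (no boundary term), and is comparable to $\norm{\cdot}_\zeta$ with a tempered factor --- though this proves a variant of the statement rather than the finite-$N$ form as written.
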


We shall fix $\eps < \min \{-\frac{\lambda^+}{5}, \frac{\alpha
  \lambda}{2}\}$, where $\alpha$ is the H\"{o}lder exponent of $F$ and
$\lambda$ is the hyperbolicity constant appearing in Theorem
\ref{thm:AnosovClosing}. Consider the function $A$ and the metric
$\norm{\cdot}^\prime$ given by the previous lemma and let us fix them
from now on.

Using this metric, it is also standard to show that one can define
sub-exponential neighborhoods (sometimes called \emph{Pesin charts})
of typical points with respect to $\hat \mu$ such that the dynamics in
those neighborhoods behaves similarly to the derivative (see for
example \cite[Supplement]{KatokHasselblatt}).

For $\zeta \in \mc{E}$ we shall consider the exponential map
$\mathrm{exp}: T_{\zeta}\mc{E}_{\pi(\zeta)} \to \mc{E}_{\pi(\zeta)}$
where the distances in $T_\zeta \mc{E}_{\pi(\zeta)}$ are measured with
respect to the metric $\|\cdot \|'_\zeta$. We denote by $B_\zeta'(r)$
the ball of radius $r$ centered at $0$ in $T_\zeta
\mc{E}_{\pi(\zeta)}$.

\begin{lemma}\label{lem:pesincharts}
  There exists a measurable function $\rho: \mc{E} \to (0,+\infty)$
  such that if $\varphi_\zeta= \mathrm{exp}|_{B_\zeta'(\rho(\zeta))}$
  then we have that for $\hat \mu$-almost every point one has that
  $\varphi_{F(\zeta)}^{-1} \circ F \circ \varphi_{\zeta}:
  B_\zeta'(\rho(\zeta)) \to B_{F(\zeta)}'(\rho(F(\zeta)))$ contracts
  vectors by a factor of at least $e^{(\lambda^+ + 2\eps)}$. Moreover,
  it holds that the sequence $\rho(F^n(\zeta))$ is sub-exponential and
  can be chosen so that $e^{-\eps}\rho(\zeta) <\rho(F(\zeta)) < e^\eps
  \rho(\zeta)$.
\end{lemma}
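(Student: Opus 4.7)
The plan is to perform the standard Pesin chart construction adapted to the fibered setting. At a $\hat\mu$-generic point $\zeta\in\mc{E}$, I would consider the exponential map $\exp_\zeta\colon T_\zeta\mc{E}_{\pi(\zeta)}\to\mc{E}_{\pi(\zeta)}$ associated with the smooth Riemannian metric on the fiber $\mc{E}_{\pi(\zeta)}$. Since the fibers form a compact family of $C^r$-manifolds whose Riemannian structures depend Hölder-continuously on $x\in M$, the injectivity radius of $\exp_\zeta$ is bounded below by some $r_0>0$ uniformly in $\zeta$. Writing $B'_\zeta(r)$ for the ball of radius $r$ around $0$ in $(T_\zeta\mc{E}_{\pi(\zeta)},\norm{\cdot}'_\zeta)$, I would set $\varphi_\zeta:=\exp_\zeta|_{B'_\zeta(\rho(\zeta))}$, and the task is to produce $\rho$ positive, measurable, and sub-exponential along orbits, such that $\tilde F_\zeta:=\varphi_{F(\zeta)}^{-1}\circ F\circ\varphi_\zeta$ is well-defined on $B'_\zeta(\rho(\zeta))$ and contracts $\norm{\cdot}'$-vectors there by at least $e^{\lambda^++2\eps}$.

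To estimate the derivative of $\tilde F_\zeta$, observe that $D\tilde F_\zeta(0)=\dpf F(\zeta)$ via the exponential identifications, and this operator already contracts $\norm{\cdot}'$-vectors by $e^{\lambda^++\eps}$ by Lemma \ref{lem:changemetric}. On a ball of radius $r$, $D\tilde F_\zeta(w)$ differs from $\dpf F(\zeta)$ by two contributions: (a) the oscillation of $\eta\mapsto\dpf F(\eta)$ on a neighborhood of $\zeta$, which is controlled by the uniform continuity of $\dpf F$ on the compact space $\mc{E}$, and (b) an $O(r)$ contribution arising from the nonlinearity of $\exp_\zeta$ and $\exp_{F(\zeta)}$. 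Translating the uniform-continuity estimate (a) into the adapted metric requires absorbing the distortion between $\norm{\cdot}$ and $\norm{\cdot}'_\zeta$ at $\zeta$ and $F(\zeta)$, a distortion explicitly bounded by an expression in $A(F^k(\zeta))$ for $0\leq k\leq N$ coming from the definition of $\norm{\cdot}'$. Putting these estimates together, I can select a measurable $\rho_0\colon\mc{E}\to(0,\infty)$ so that, for $\hat\mu$-a.e.\ $\zeta$ and every $w\in B'_\zeta(\rho_0(\zeta))$,
\[
  \norm{D\tilde F_\zeta(w)\cdot v}'_{F(\zeta)}\leq e^{\lambda^++2\eps}\norm{v}'_\zeta,\qquad\forall v\in T_\zeta\mc{E}_{\pi(\zeta)}.
\]

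The sub-exponentiality of $(\rho_0(F^n(\zeta)))_n$ is inherited from the sub-exponential behavior of $A$ along orbits, since the only non-uniform ingredient entering the construction of $\rho_0$ is the distortion between the two metrics. Finally, to upgrade to the two-sided sub-exponential control $e^{-\eps}\rho(\zeta)<\rho(F(\zeta))<e^{\eps}\rho(\zeta)$, I would apply the standard renormalization
\[
  \rho(\zeta):=\inf_{n\in\Z}e^{-|n|\eps}\rho_0(F^n(\zeta)),
\]
which is positive for $\hat\mu$-a.e.\ $\zeta$ because $\rho_0$ is sub-exponential along such orbits, and which satisfies the required two-sided bound directly from the triangle inequality $|m\pm1|\leq 1+|m|$ applied to the defining infimum.

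The main obstacle is the careful bookkeeping between the original metric, in which uniform continuity of $\dpf F$ is naturally expressed, and the adapted metric $\norm{\cdot}'$, in which the contraction is formulated. The flexibility granted by Lemma \ref{lem:changemetric}, specifically the sub-exponential behavior of the distortion function $A$, is precisely what makes it possible to absorb this mismatch and produce a positive measurable $\rho_0$ with sub-exponential behavior along orbits, after which the renormalization step is purely combinatorial.
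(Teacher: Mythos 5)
Your overall strategy is the right one, and in fact it is the argument the paper itself leaves implicit: the paper does not prove Lemma~\ref{lem:pesincharts} but refers to the standard Pesin-chart construction in \cite[Supplement]{KatokHasselblatt}, remarking only that, since all fibered exponents of $\hat\mu$ are negative, uniform continuity of $\dpf F$ on the compact space $\mc{E}$ (i.e.\ $C^1$ regularity along fibers) replaces the usual H\"older hypothesis on the derivative. Your sketch — contraction of $\dpf F(\zeta)$ in the adapted norm from Lemma~\ref{lem:changemetric}, an $e^{\eps}$ margin to absorb both the oscillation of $\dpf F$ and the nonlinearity of the exponential charts, with the metric distortion controlled by $A(F^k(\zeta))$, $0\le k\le N$ — is exactly this standard route, so no genuinely different method is involved.

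There is, however, one step that fails as written: the tempering formula
\begin{displaymath}
  \rho(\zeta):=\inf_{n\in\Z}e^{-|n|\eps}\,\rho_0\big(F^n(\zeta)\big)
\end{displaymath}
is identically zero. Indeed $\rho_0$ is bounded above (by the injectivity radius you introduced), so $e^{-|n|\eps}\rho_0(F^n(\zeta))\to 0$ as $|n|\to\infty$ and the infimum vanishes; more generally, sub-exponential behavior of $\rho_0$ along the orbit kills the infimum with this sign. The correct renormalization has the opposite sign in the exponent, $\rho(\zeta):=\inf_{n\in\Z}e^{+|n|\eps}\rho_0(F^n(\zeta))$: then sub-exponentiality of $\log\rho_0(F^n(\zeta))$ makes the terms tend to $+\infty$, so the infimum is positive $\hat\mu$-a.e., one still has $\rho\le\rho_0$ (take $n=0$), and the inequality $\big||m|-|m\pm 1|\big|\le 1$ gives $e^{-\eps}\rho(\zeta)\le\rho(F(\zeta))\le e^{\eps}\rho(\zeta)$ exactly as you intend — note that your own positivity argument (``because $\rho_0$ is sub-exponential'') is the argument for this corrected formula, not for the one you wrote. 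Two smaller points worth a sentence each in a complete write-up: you should check that the chart map actually lands in $B'_{F(\zeta)}(\rho(F(\zeta)))$, which follows from the contraction rate $e^{\lambda^++2\eps}$ together with $\rho(F(\zeta))>e^{-\eps}\rho(\zeta)$ and the standing choice $\eps<-\lambda^+/5$ (so $\lambda^++3\eps<0$); and since the fibers carry only a $C^1$ structure with a H\"older-in-$x$ Riemannian metric, the exponential maps are best taken from a fixed smooth metric on $N$ transported through the trivializing charts, with the nonlinearity error then only a uniform $o(1)$ rather than $O(r)$ — which is all the argument needs.
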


It is relevant to remark here the fact that the sub-exponential growth
of the function $\rho$ is essential in Pesin's theory and it is where
the H\"{o}lder regularity of the derivative is usually used. Here,
since we are working with measures whose Lyapunov exponents are all
negative, $C^1$-regularity in the fibers is enough.

\begin{remark}
  Notice that there is a measurable function $D$ which, associates to
  each $\zeta \in \mc{E}$ an isometry $D_\zeta: \big(T_\zeta
  \mc{E}_{\pi(\zeta)}, \norm{\cdot}_\zeta\big) \to \big(T_\zeta
  \mc{E}_{\pi(\zeta)}, \norm{\cdot}'_\zeta \big)$. When this linear
  map is seen as a transformation from $\big( T_\zeta
  \mc{E}_{\pi(\zeta)},\norm{\cdot}_\zeta\big)$ to itself, the norm and
  co-norm of $D_\zeta$ are bounded by a number depending only on
  $A(\zeta)$.
\end{remark}

Using Luisin's Theorem on approximation of measurable functions by
continuous ones (see for example \cite[Supplement]{KatokHasselblatt})
one obtains a compact set $X \subset \mc{E}$ of positive $\hat
\mu$-measure such that functions $A$ and $\rho$ are continuous on $X$
and, thus, bounded (we define $A_X:=\sup_{\zeta\in X} A(\zeta)$ and
$\rho_X:=\inf_{\zeta\in X}\rho(x)$).

Consider a point $\zeta_0 \in X$ which is recurrent inside $X$, \ie
there exists $n_j \to \infty$ such that $F^{n_j}(\zeta_0) \to \zeta_0$
and $F^{n_j}(\zeta_0) \in X$ for every $j>0$.

Let us write $x_0 := \pi(\zeta_0)$ and, for each $j>0$, let $p_j \in
\Fix(f^{n_j})$ be the periodic point of $f$ given by Anosov Closing
Lemma (Theorem \ref{thm:AnosovClosing}). One has that:
\begin{displaymath}
  d(f^i(x_0), f^i(p_j)) \leq c e^{-\lambda \min \{i,n_j-i\}}
  d(f^{n_j}(x_0),x_0), \quad\text{for } i=0, \ldots, n_j.
\end{displaymath}
where $c,\lambda>0$ are the constants given in Theorem
\ref{thm:AnosovClosing}. Notice that $d(f^{n_j}(x_0),x_0) \leq
d_{\mc{E}} (F^{n_j}(\zeta_0),\zeta_0) \to 0$.

Fix $\delta< \eps$ and let $\chi$ be the constant given by Lemma
\ref{Lemma-UniformContinuity} for such a $\delta$.

The main step in the proof is the following
\begin{lemma}
  \label{Lemma-PuntoCerca}
  For $n_j$ large enough, there exists a small open ball
  $B_j\subset\mc{E}_{p_j}$ such that
  \begin{equation}
    \label{eq:Bj-recurrence}
    F^{n_j}(\overline{B_j})\subset B_j.
  \end{equation}
  Moreover, $\mathrm{diam}(F^i(B_j))\leq \chi/2$ and
  $d_{\mc{E}}(F^i(B_j), F^i(\zeta_0))\leq\chi/2$, for every
  $0\leq i\leq n_j$.
\end{lemma}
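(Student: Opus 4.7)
The plan is a Pesin closing-lemma argument carried out in transferred Pesin charts. Since $p_j\to x_0$, for $j$ large both points lie in the domain of a common local trivialization of the fiber bundle, which allows us to transfer the Pesin chart $\varphi_{F^i(\zeta_0)}$ of Lemma~\ref{lem:pesincharts} (sitting over $f^i(x_0)$) to a chart $\tilde\varphi_i\colon B'_{F^i(\zeta_0)}(\rho(F^i(\zeta_0)))\to\mc{E}_{f^i(p_j)}$; its center $\hat\zeta_j^{(i)}:=\tilde\varphi_i(0)$ is a ``copy'' of $F^i(\zeta_0)$ in the fiber over $f^i(p_j)$. In particular $\hat\zeta_j:=\hat\zeta_j^{(0)}$ and $\hat\zeta_j^{(n_j)}$ both lie in $\mc{E}_{p_j}$, since $f^{n_j}(p_j)=p_j$.

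The main estimate is to compare the transferred iterate $G_{n_j-1}\circ\cdots\circ G_0$, where $G_i:=\tilde\varphi_{i+1}^{-1}\circ F\circ\tilde\varphi_i$, with the Pesin iterate $G_i^\star:=\varphi_{F^{i+1}(\zeta_0)}^{-1}\circ F\circ\varphi_{F^i(\zeta_0)}$. Each $G_i^\star$ is a contraction of ratio at most $e^{\lambda^++2\eps}$ by Lemma~\ref{lem:pesincharts}, so $G_{n_j-1}^\star\circ\cdots\circ G_0^\star$ maps $B'_{\zeta_0}(r)$ into $B'_{F^{n_j}(\zeta_0)}(re^{n_j(\lambda^++2\eps)})$ for any $r\leq\rho(\zeta_0)$. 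The discrepancy $G_i-G_i^\star$ (between two maps into the common tangent space $T_{F^{i+1}(\zeta_0)}\mc{E}_{f^{i+1}(x_0)}$) comes entirely from the Hölder variation of $F$ in the base, and is thus bounded in $C^0$ by a constant multiple of $d(f^i(x_0),f^i(p_j))^\alpha\leq c^\alpha e^{-\alpha\lambda\min\{i,n_j-i\}}d(f^{n_j}(x_0),x_0)^\alpha$ via Theorem~\ref{thm:AnosovClosing}(1). An iteration-of-errors summation, in which the Pesin contraction rate $\lambda^++2\eps<0$ combines with the shadowing rate $-\alpha\lambda$, then shows that the total $C^0$-error $E_j$ between the two compositions is bounded by $C\,d(f^{n_j}(x_0),x_0)^\alpha$, which tends to $0$ as $j\to\infty$.

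Fix a small $r>0$ and define $B_j:=\tilde\varphi_0(B'_{\zeta_0}(r))\subset\mc{E}_{p_j}$. By the comparison above, $F^{n_j}(\overline{B_j})$ is contained in the $\tilde\varphi_{n_j}$-image of the ball of radius $re^{n_j(\lambda^++2\eps)}+E_j$ around $0$. Since $F^{n_j}(\zeta_0)\to\zeta_0$ in the compact set $X$ where the Pesin data vary continuously, the two charts $\tilde\varphi_{n_j}$ and $\tilde\varphi_0$ differ only by $o(1)$ as maps into $\mc{E}_{p_j}$; hence this image lies inside $\tilde\varphi_0(B'_{\zeta_0}(r))=B_j$ for all sufficiently large $j$. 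The diameter and orbit-closeness bounds $\chi/2$ stated in the lemma follow by applying the same comparison at every intermediate time $0\leq i\leq n_j$ and shrinking $r$ if needed.

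The main obstacle is the uniformity of the error estimate $E_j\to 0$. It requires balancing the exponential Anosov shadowing rate $\lambda$ against the sub-exponential variation of the Pesin chart sizes $\rho(F^i(\zeta_0))$ and the metric distortions from $\|\cdot\|'$, together with the merely $\alpha$-Hölder dependence of $F$ on the base. The choice $\eps<\min\{-\lambda^+/5,\alpha\lambda/2\}$ made just after Lemma~\ref{lem:changemetric} is precisely what makes the resulting geometric series summable and of order $d(f^{n_j}(x_0),x_0)^\alpha\to 0$.
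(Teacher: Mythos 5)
Your proposal is essentially the paper's own proof: the same transfer of the Pesin data to the fibers over $f^i(p_j)$ via common trivializing charts, the same error recursion balancing the contraction rate $e^{\lambda^++2\eps}$ against the H\"older/Anosov-closing error $a_i\lesssim e^{(\eps-\alpha\lambda)\min\{i,n_j-i\}}\,d(x_0,f^{n_j}(x_0))^\alpha$ (this is exactly \eqref{eq:induction-argument} and \eqref{eq:ai-estimate}), and the same use of $\zeta_0,F^{n_j}(\zeta_0)\in X$ to control the metric distortion at the endpoints and close the ball. The paper phrases the estimate as an induction on a single transferred point $\zeta_j\in\mc{E}_{p_j}$ rather than on compositions of charts, but the content is the same.
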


Let us now conclude the proof of Theorem~\ref{thm:non-zero-exp}
assuming this lemma:

By \eqref{eq:Bj-recurrence}, we know there exists $\xi_j\in B_j$
such that $F^{n_j}(\xi_j)=\xi_j$, and it also holds
$d_{\mc{E}}(F^i(\xi_j),F^i(\zeta_0))\leq\chi$, for all $0\leq
i\leq n_j$. Hence, applying Lemma~\ref{Lemma-UniformContinuity}
and the fact that $\zeta_0, F^{n_j}(\zeta_0) \in X$, we prove that
$\xi_j$ is uniformly contracting along the fiber, as desired.

So, it only remains to prove Lemma~\ref{Lemma-PuntoCerca}:

\begin{proof}[Proof of Lemma~\ref{Lemma-PuntoCerca}]
  Since there exists a trivializing chart containing $x_0$ and $p_j$,
  there exists a point $\zeta_j\in\mc{E}_{p_j}$ such that
  $d_{\mc{E}}(\zeta_j,\zeta_0)=d(p_j,x)$.

  From the choice of $x_0$ and $p_j$, if $n_j$ is large enough, we can
  always assume that both $f^i(x_0)$ and $f^i(p_j)$ lie in the same
  trivializing chart for $0\leq i\leq n_j$. So, fixing a trivializing
  chart containing $f^{i}(x_0)$ and $f^{i}(p_j)$, we have a projection
  $\mathrm{pr}_2\colon\mc{E}_{f^i(p_j)}\to\mc{E}_{f^i(x_0)}$. Given
  two points $\xi\in\mc{E}_{f^i(x_0)}$ and $\eta\in\mc{E}_{f^i(p_j)}$
  such that $\xi,\mathrm{pr}_2(\eta)\in
  B_{F^i(\zeta_0)}'(\rho(F^i(\zeta_0)))$, we can define
  $d_{\mc{E}}'(\xi,\eta) := d(\pi(\xi), \pi(\eta)) + d'(\xi,
  \mathrm{pr}_2(\eta))$, were $d'$ is the distance in
  $B_{F^i(\zeta_0)}'(\rho(F^i(\zeta_0)))$ induced by the norm
  $\norm{\cdot}^\prime_{F^i(\zeta_0)}$.

  For $1\leq k \leq n_j$, and assuming that
  $d_{\mc{E}}'\big(F^{k-1}(\zeta_j),F^{k-1}(\zeta_0)\big)\leq \min \{
  \rho(F^{k-1}(\zeta_0)), \frac{\chi}{2} \}$, we can invoke
  Lemma~\ref{Lemma-UniformContinuity} to get
  \begin{equation}
    \label{eq:induction-argument}
    \begin{split}
      &d_{\mc{E}}'\big(F^k(\zeta_j),F^k(\zeta_0)\big)
      =d_{\mc{E}}'\Big(F\big(F^{k-1}(\zeta_j)\big), F\big(
      F^{k-1}(\zeta_0)\big)\Big) \leq \\
      & \leq \hat c e^{\eps \min\{k, n_j-k\}}
      d_{C^1}\big(F_{f^{k-1}(p_j)},F_{f^{k-1}(x_0)}\big) + \\
      & + e^\delta\norm{\dpf F(F^{k-1}(\zeta_0))}'
      d_{\mc{E}}'\big(F^{k-1}(\zeta_j), F^{k-1}(\zeta_0)\big),
    \end{split}
  \end{equation}
  where the constant $\hat c$ only depends on $A_X$ and $\rho_X$. The
  factor $\hat c e^{\eps\min\{k, n_j-k\}}$ appears to take into
  account the distortion in the new metric, which is bounded by $\hat
  c$ at the points $\zeta_0,F^{n_j}(\zeta_0) \in X$ and the change of
  the distortion at each iterate is bounded by $e^\eps$.

  Now, let us define the sequences $a_k:=\hat c e^{\eps \min\{k,
    n_j-k\}} d_{C^1}\big(F_{f^{k-1}(p_j)},F_{f^{k-1}(x_0)}\big)$ and
  $b_k:= e^\delta\norm{\dpf F(F^{k-1}(\zeta_0))}'$. Observe that
  $b_k\leq e^{(\lambda^+ + 3\eps)} < 1$, for every $k\geq 1$.

  By induction and applying estimate \eqref{eq:induction-argument},
  one gets
  \begin{equation}
    \label{eq:aj-bj}
    d_{\mc{E}}'\big( F^k(\zeta_j),F^k(\zeta_0)\big) \leq \sum_{i=1}^k
    a_i \Big(\prod_{j=i+1}^k 
    b_j\Big) \leq \sum_{i=1}^k a_i e^{(k-i)(\lambda^+ + 3\eps)}, 
    \quad\forall k\geq 1. 
  \end{equation}

  One can estimate the size of $a_i$ as follows (here is where
  H\"{o}lder continuity of $F$ is essential):
  \begin{equation}
    \label{eq:ai-estimate}
    a_i \leq e^{\eps \min \{ i,n_j-i \}} c' e^{-\alpha \lambda \min \{
      i, n_j-i \}} d(x_0,f^{n_j}(x_0))^\alpha, \quad\text{for } 0\leq
    i\leq n_j,
  \end{equation}
  where $c'>0$ depends on the constant $c$ appearing in Anosov closing
  lemma (Theorem \ref{thm:AnosovClosing}), the H\"{o}lder norm of the
  $C^{\alpha,1}$-bundle map $F$ and the constant $\hat c$ which was
  defined above.

  Choosing $n_j$ so that $d(x_0,f^{n_j}(x_0))$ is sufficiently small
  and recalling that $\eps < \frac{1}{2} \alpha \lambda$, thus we can
  perform the induction and to ensure that the iterates $F^k(\zeta_j)$
  of the point $\zeta_j$ remain always close enough to $F^k(\zeta_0)$.

  Using the estimate of Lemma \ref{Lemma-UniformContinuity}, one
  conclude that there is a ball $B_j$ with the desired properties.
\end{proof}

\section{Some questions}
\label{sec:questions}

In this section we pose some questions that emerge rather naturally
from our results and remain open. The firs one, taking into account
Theorem~\ref{thm:lyap-exp-dom-cobound}, should be the only remaining
step necessary to prove Liv\v{s}ic theorem for cocycles of
diffeomorphisms in any dimension:

\begin{question}
  Let $N\to\mc{E}\xrightarrow{\pi} M$ be a $C^{\alpha,1}$-fiber bundle
  and $F\colon\mc{E}\carr$ be $C^{\alpha,1}$-bundle map over a
  hyperbolic $C^\alpha$-homeomorphism $f\colon M\carr$. Suppose $F$
  satisfies \eqref{eq:POO-bund-map}. Then, is it true that $F$ is
  $\alpha$-dominated?
\end{question}

The second one concerns the validity of Liv\v{s}ic type results for
more general groups:

\begin{question}
  Does there exists a complete metric group $G$ such that Liv\v{s}ic
  theorem does not hold for $G$-cocyles? More precisely: do there
  exist a hyperbolic homeomorphism $f\colon M\carr$ and a
  $C^\alpha$-cocycle $\Phi\colon M\to G$ such that $\Phi$ satisfies
  \eqref{eq:POO} but it is not a $G$-coboundary?
\end{question}

\bibliographystyle{amsalpha} \bibliography{base-biblio}

\end{document}